\newtheorem{thm}{Theorem}[section]
\newtheorem{cor}[thm]{Corollary}
\newtheorem{prop}[thm]{Proposition}
\theoremstyle{definition}
\theoremstyle{remark}
\newtheorem{rk}[thm]{Remark}
\newtheorem{example}[thm]{Example}
\newcommand{\CPb}{\overline{\mathbb{CP}}{}^{2}}
\newcommand{\CP}{{\mathbb{CP}}{}^{2}}
\newcommand{\Int}{\mathop{\mathrm{Int}}\nolimits}
\newcommand{\rank}{\mathop{\mathrm{rank}}\nolimits}
\newcommand{\R}{\mathbb{R}}
\newcommand{\Z}{\mathbb{Z}}
\newcommand{\bang}{\textcolor{red}{\textbf{\Large !\,}}}
\newcommand{\id}{\mathop{\mathrm{id}}\nolimits}
\def \x {\times}
\begin{document}

\title[Simplifying indefinite fibrations on $4$--manifolds] {Simplifying indefinite fibrations on $4$--manifolds}
\vspace{0.2in} 

\author[R. \.{I}. Baykur]{R. \.{I}nan\c{c} Baykur}
\address{Department of Mathematics and Statistics, University of Massachusetts, Amherst, MA 01003-9305, USA}
\email{baykur@math.umass.edu}

\author[O. Saeki]{Osamu Saeki}
\address{Institute of Mathematics for Industry,
Kyushu University, Motooka 744, Nishi-ku, Fukuoka 819-0395,
Japan}
\email{saeki@imi.kyushu-u.ac.jp }

\begin{abstract}
We present explicit algorithms for simplifying the topology of indefinite fibrations on $4$--manifolds, which include broken Lefschetz fibrations and indefinite Morse $2$--functions. The algorithms consist of sequences of moves, which modify indefinite fibrations in smooth $1$--parameter families. In particular, given an arbitrary broken Lefschetz fibration, we show how to turn it to one with directed and embedded round (indefinite fold) image, and to one with all the fibers and the round locus connected. We also show how to realize any given
null-homologous $1$--dimensional submanifold with prescribed local models for its components as the round locus of such a broken Lefschetz fibration. These algorithms allow us to give purely topological and constructive proofs of the existence of simplified broken Lefschetz fibrations and Morse $2$--functions on general $4$--manifolds, and a theorem of Auroux--Donaldson--Katzarkov on the existence of broken Lefschetz pencils with directed embedded round image on near-symplectic $4$--manifolds. We moreover establish a correspondence between broken Lefschetz fibrations and \mbox{Gay--Kirby} trisections of $4$--manifolds, and show the existence of simplified trisections on all $4$--manifolds. Building on this correspondence, we provide several new constructions of trisections, including infinite families of genus--$3$ trisections with homotopy inequivalent total spaces, and exotic same genera trisections of \mbox{$4$--manifolds} in the homeomorphism classes of complex rational surfaces.
\end{abstract}

\maketitle

\tableofcontents

%\vspace{0.1in}
\section{Introduction}
%==========================================================================================

An \emph{indefinite fibration} is a smooth map from a compact smooth oriented \linebreak $4$--manifold onto an orientable surface, with simplest types of stable and unstable singularities; its singular locus consists of indefinite folds and cusps along embedded circles (\emph{round locus}), and Lefschetz singularities along a disjoint discrete set. Its fibers are smooth orientable surfaces and singular surfaces obtained by collapsing embedded loops on them. \mbox{The class} of indefinite fibrations\footnote{The name ``indefinite fibrations'' is indeed a mutual compromise for ``indefinite generic maps'' (indefinite Morse $2$--functions) and ``broken Lefschetz fibrations''. Other authors used ``wrinkled fibrations'' \cite{L} and ``broken fibrations'' \cite{W} for the same class of maps, among others.} naturally includes \emph{broken Lefschetz fibrations} (when there are no cusps) and \emph{indefinite Morse $2$-- functions} (no Lefschetz singularities), both of which received a hefty amount of attention over the past decade; e.g.\ \cite{AK, ADK, B2, B1, B3, B4, BH, BK, SB, BeH, GK1, GK3, GK2, H1, H2, H3, HS1, HS2, Hughes, KMT, L, P, P1, Sa, Sa2, W, W2}.  Moreover, a special class of generic maps, which are ``almost'' indefinite, except for a single definite fold circle, underline the recently emerging theory of \emph{trisections} of $4$--manifolds; e.g. \cite{AGK, Castro, CGP, Gay, GK4, MSZ, MZ, MZ2, RS}.

In this paper, we provide explicit algorithms which immensely \mbox{\emph{simplify}} the topology of indefinite fibrations, and in turn, we present purely topological and constructive proofs of the existence  of simplified broken Lefschetz fibrations, pencils, and trisections.

Our algorithms consist of sequences of moves that modify indefinite fibrations 
in smooth $1$--parameter families. We describe several procedures of this type,
especially for maps onto the $2$--sphere,
to derive an indefinite fibration ---roughly--- with the property that \emph{every fiber}
\vspace{-0.05cm}
\begin{itemize}
\item can be obtained from a fixed regular fiber by a sequence of fiberwise 
\linebreak \mbox{$2$--handle} attachments \,(\emph{directed}),
\item contains at most one fold or cusp point (\emph{embedded} round image),
\item is connected  \,(\emph{fiber-connected}),
\item if regular, is either of genus $g$ or $g-1$ \,(\emph{simplified}),
\end{itemize}
\vspace{-0.05cm}
and we achieve these properties cumulatively.  Here, what makes an indefinite fibration ``simpler'' is in essence quantified by the complexity of the handle decomposition induced by the fibration; e.g.\ the second property allows one to describe the $4$--manifold by \emph{round $2$--handle} (circle times a $2$--handle) attachments to a simple handlebody for a Lefschetz fibration over the $2$--disk, and the fourth makes it possible to do it with a single round $2$--handle \cite{B1}; cf.\ \cite{GK3}. Simplified fibrations induce decompositions of the $4$--manifold into elementary fibered cobordisms between surface bundles over circles,  suitable for calculating Floer theoretic invariants, such as  Heegaard--Floer \cite{OS1, OS2, JM1, JM2, JM3, JM4}, Lagrangian matching \cite{P, P1}, or quilted Floer invariants \cite{WW1, WW2}.

Each homotopy move we use corresponds to a bifurcation in a smooth $1$--
\linebreak parameter family of indefinite fibrations, which may involve a single point (\emph{mono-germ move}) or two/three points (\emph{multi-germ move}), and locally changes the topology of the fibers. Many of these moves have already been studied and employed by several authors, most notably, by Levine, Hatcher--Wagoner, Eliashberg--Mishachev, Lekili, Williams, Gay--Kirby, Behrens--Hayano, and the authors of this article {\cite{B2, B1, BeH, GK2, HW, L, Lev, Sa, Sa2, W, W2}}. As we review mono-germ and multi-germ moves, we will identify a list of  \emph{always-realizable base diagram moves}; namely, local modifications of the singular image on the base (largely corresponding to Reidemeister type moves for fold images), which can be always realized by a homotopy move for indefinite fibrations.

Our first main result is the following:

\begin{thm} \label{firstquotedthm}
Let $X$ be a closed oriented connected $4$--manifold. 
There is an explicit algorithm consisting of sequences of always-realizable mono-germ and multi-germ moves, 
which homotopes any given indefinite fibration $f\colon X \to S^2$ to a simplified indefinite fibration 
$g\colon X \to S^2$, which is fiber-connected, directed, has embedded singular image 
and connected round locus. It suffices to use mono-germ moves flip, unsink,
and cusp merge, and multi-germ moves push, criss-cross braiding, and Reidemeister type moves $\mathrm{R2}^0$, $\mathrm{R2}^1$, 
$\mathrm{R2}_2$, $\mathrm{R3}_2$, $\mathrm{R3}_3$.
\end{thm}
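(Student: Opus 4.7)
The plan is to prove the theorem by achieving each of the listed properties (directedness, embedded singular image, fiber-connectedness, connected round locus, and simplification of the fiber genera) in sequence, via explicit algorithmic steps that use only the moves in the statement and that each preserve the properties established in earlier steps.

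First I would direct the fibration. The round locus is a disjoint union of circles whose images separate regions of different fiber genera; whenever a fold arc is ``misdirected'' so that the higher-genus side points inward, I would apply a flip to turn the fold arc into a cusped configuration, then use unsink and push together with the Reidemeister moves $\mathrm{R2}_2$, $\mathrm{R3}_2$, $\mathrm{R3}_3$ to slide the resulting cusps and rearrange arcs until every fold points outward. Next I would make the singular image embedded: the fold image is a collection of immersed circles on $S^2$ (now carrying cusps), and Lefschetz critical values sit at discrete points. Using $\mathrm{R2}^0$, $\mathrm{R2}^1$, $\mathrm{R2}_2$, $\mathrm{R3}_2$, $\mathrm{R3}_3$ together with push moves, one removes each double point of the fold image and moves Lefschetz values off the fold locus. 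The criss-cross braiding move is invoked precisely in those configurations where two fold crossings must be resolved simultaneously as a multi-germ bifurcation, and one checks in each case that directedness is preserved.

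To achieve fiber-connectedness, I would apply the standard round $1$-handle trick: if a regular fiber is disconnected, a flip followed by an unsink introduces a Lefschetz critical point whose vanishing cycle can be arranged to join two components of the fiber, and a push plus cusp merge then installs a round $1$-handle that performs a fiberwise connected sum. Since $X$ is connected, only finitely many such handles are needed. To connect the round locus, I would pair up round components and use flip together with cusp merge to amalgamate them, correcting any newly introduced double points in the base diagram by the always-realizable Reidemeister moves listed above. Finally, unsinks and flips are used to reduce the set of generic fiber genera to the two consecutive values $g$ and $g-1$, yielding a simplified fibration.

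The main obstacle, and where I expect the real work of the proof to lie, is checking that each step preserves all of the previously established properties and only ever invokes always-realizable moves. An innocuous-looking cusp merge, for instance, can temporarily reintroduce a double point in the singular image or disconnect a fiber, and one must exhibit explicit compensating Reidemeister and push moves that restore the broken property. To guarantee termination I would equip indefinite fibrations with a lexicographic complexity functional — involving, for example, the number of round components, the number of double points of the fold image, the number of disconnected regular fibers, and the maximum fiber genus — and verify that it strictly decreases at each stage. The careful move-by-move compatibility analysis together with this monovariant produces the desired algorithm.
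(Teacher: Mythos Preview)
Your overall architecture---achieve directedness, then embeddedness, then fiber-connectedness, then connected round locus---matches the paper's, but the embedded-image step has a genuine gap.

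You write that one ``removes each double point of the fold image'' using $\mathrm{R2}^0$, $\mathrm{R2}^1$, $\mathrm{R2}_2$, $\mathrm{R3}_2$, $\mathrm{R3}_3$ and push. But among those, only $\mathrm{R2}_2$ actually \emph{removes} a pair of crossings, and it applies only when both normal arrows point into the bigon. The moves $\mathrm{R2}^0$ and $\mathrm{R2}^1$ \emph{create} crossings; their pseudo-inverses $\mathrm{R2}_0$ and $\mathrm{R2}_1$, which you would need to eliminate the other bigon types, are precisely the moves that are \emph{not} always-realizable (there are homological obstructions coming from handle-slide winding numbers). So a direct crossing-by-crossing elimination does not work with the allowed move list, and your lexicographic complexity cannot be made to decrease this way.

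You also misidentify the role of criss-cross braiding: it does not ``resolve two fold crossings simultaneously''; it \emph{introduces} three crossings in order to effect a transposition of adjacent strands. The paper's strategy is quite different from local crossing removal. After directing, it views the round image as the closure of a virtual braid around a pole of $S^2$; criss-cross braiding is used repeatedly to adjust the underlying permutation until the braid is \emph{pure}. Then, one strand at a time, a complementary arc of that strand is swung over the opposite pole of $S^2$ and dragged through the braid using only $\mathrm{R2}^0$, $\mathrm{R2}_2$, $\mathrm{R3}_3$ (since its normal direction is opposed to all other strands), eliminating kinks via flip plus $\mathrm{R2}_2$ plus unsink. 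This globally untangles each component without ever needing the forbidden $\mathrm{R2}_0$, $\mathrm{R2}_1$ moves. Fiber-connectedness then comes for free---connectedness of $X$ forces the lowest-genus fiber to be connected, and every other fiber is obtained from it by $1$--handle attachments---so your separate ``round $1$-handle trick'' is unnecessary.
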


Theorem~\ref{firstquotedthm} will follow from slightly more general procedures, each to strike one of the four  properties we listed earlier, where it will suffice to use even narrower selection of moves; see Theorems~\ref{mainthm1} and~\ref{mainthm2}, Proposition~\ref{prop:locusconnected}.

There is a quantitative and a qualitative advantage to the homotopies we construct. The list of moves in the second part of the theorem compromises less than a fourth of all the elementary moves that naturally arise in bifurcations in generic homotopies of indefinite fibrations (and less than a half of those with only indefinite singularities), along with a special move: the criss-cross braiding move, which is a combination of homotopy moves that are not always-realizable themselves; see Proposition~\ref{lem:umbilic}. In particular, the process does not involve introduction/elimination of a $1$--dimensional round locus component, known as birth/death.
On the other hand, our exclusive use of only always-realizable moves via base diagrams eliminates the need to carry around the very-hard-to-track information for justifying the validity of certain moves.

A necessary condition for an oriented embedded \mbox{$1$--manifold}  $Z$ in a closed oriented $4$--manifold $X$ to be the round locus of an indefinite fibration is that $Z$ is \mbox{null-homologous}, i.e.\ $[Z]=0$ in $H_1(X; \Z)$; see Proposition~\ref{Nullhomologous}. Our second theorem shows that it is also a sufficient condition for realizing $Z$ as the round locus of 
some indefinite fibration, after a homotopy:

\begin{thm} \label{secondquotedthm}
Let $X$ be a closed oriented connected $4$--manifold and $Z$ be a (non-empty) null-homologous 
closed oriented $1$--dimensional submanifold of $X$. There is an \mbox{explicit} algorithm consisting of 
sequences of always-realizable mono-germ and multi-germ moves, which homotopes any given indefinite 
fibration $f \colon X \to S^2$ with non-empty round locus to a fiber-connected, directed broken Lefschetz 
fibration \mbox{$g\colon X \to S^2$} with embedded singular image, whose round locus $Z_g$ coincides with 
$Z$ as oriented  \mbox{$1$--manifolds}. It suffices to use mono-germ moves flip, unsink, cusp merge, and  multi-germ moves push, criss-cross braiding, and Reidemeister type moves $\mathrm{R2}^0$, $\mathrm{R2}^1$,  $\mathrm{R2}_2$, $\mathrm{R3}_2$, $\mathrm{R3}_3$.
\end{thm}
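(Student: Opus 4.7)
The plan has three phases. In phase one, I would run the algorithm of Theorem~\ref{firstquotedthm} on the given indefinite fibration $f$ to produce a simplified, fiber-connected, directed indefinite fibration $f_1\colon X\to S^2$ with embedded singular image and a single round locus circle $C_0\subset X$. In phase two, I would apply the unsink mono-germ move at each remaining cusp of $f_1$, converting each cusp into a Lefschetz singularity without altering the smooth round locus circle and, when interspersed with Reidemeister moves to preserve general position, without destroying embeddedness of the singular image. The outcome is a directed, fiber-connected broken Lefschetz fibration $f_2\colon X\to S^2$ with embedded singular image whose round locus is the connected oriented circle $C_0$.

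The substantive phase three is to homotope $f_2$ so that its round locus becomes the prescribed oriented $1$-submanifold $Z$. Since $Z$ is null-homologous by hypothesis and $C_0$ is automatically null-homologous by Proposition~\ref{Nullhomologous}, the $1$-cycle $Z-C_0$ bounds an embedded oriented surface $\Sigma\subset X$. The idea is to use $\Sigma$ as a template guiding a sequence of local modifications to $C_0$. At each stage I would use the push multi-germ move to slide pieces of the current round locus freely inside $X$ along $\Sigma$; use a flip to create a pair of cusps on the round locus, followed by a cusp merge between cusps on different arcs so as to either split one round circle into two or surger two round circles into one, thereby matching the component structure of $Z$; and apply $\mathrm{R2}^0$, $\mathrm{R2}^1$, $\mathrm{R2}_2$, $\mathrm{R3}_2$, $\mathrm{R3}_3$ together with criss-cross braiding to preserve embeddedness of the singular image throughout. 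By induction on a handle decomposition of $\Sigma$, the round locus is transformed into $Z$ with the prescribed orientation, the latter being forced by the orientability of $\Sigma$.

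The main obstacle is certifying that the always-realizable moves are sufficient to change the component structure of the round locus without invoking birth or death. Concretely, one must show that flip-then-merge sequences implement every topological surgery on the round locus compatible with null-homology, and that the associated isotopies can always be staged so that the singular image on $S^2$ remains embedded after all Reidemeister-type adjustments. This requires both a local analysis of the cancellation and creation behavior of cusps under flip and cusp merge, and a global argument that any two null-homologous $1$-manifolds in $X$ are related by a finite sequence of such local surgeries connected by ambient isotopies realized by push. With these ingredients in place, iteration along the handle decomposition of $\Sigma$ completes the algorithm and yields the desired broken Lefschetz fibration $g\colon X\to S^2$ with $Z_g = Z$ as oriented $1$-manifolds.
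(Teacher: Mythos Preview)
Your overall architecture matches the paper's: first run Theorem~\ref{firstquotedthm} (together with Proposition~\ref{prop:locusconnected}) to reach a simplified, fiber-connected, directed broken Lefschetz fibration with a single round circle, then unsink the cusps, then adjust the round locus to match $Z$. The gap is entirely in phase three, and it comes from two misunderstandings.

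First, the \emph{push} move does not do what you say. In this paper, push is the multi-germ move that drags a \emph{Lefschetz critical value} across a fold arc in the base; it has nothing to do with sliding the round locus through $X$. There is no always-realizable move that lets you ``slide pieces of the current round locus freely inside $X$ along $\Sigma$''. The round locus is the singular set of the map, and its position in $X$ can only be changed via the listed homotopy moves on $f$, not by ambient isotopy guided by an abstract bounding surface.

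Second, and more seriously, you have no mechanism for controlling the \emph{homotopy class in $X$} of the round-locus components you produce. Flip followed by cusp merge does perform a band surgery on $Z_f$, but the isotopy class of the resulting circle depends on the joining curve for the cusp merge, and that curve lives in a fiber-neighborhood, not along your surface $\Sigma$. The paper's actual argument (Theorem~\ref{dei}) exploits a structural fact you never invoke: the inclusion of the highest-genus fiber $F$ induces a \emph{surjection} $\pi_1(F)\to\pi_1(X)$. This lets one represent each prescribed component of $Z$ by a loop $\gamma$ in $F$, and then choose the cusp-merge joining curve so that the newly created round circle is isotopic to the perturbation $t\mapsto(\gamma(t),t)\in F\times S^1$. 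One thus builds all but the last component of $Z$ with the correct isotopy type. For the last component, null-homology only pins down its class in $H_1$; the discrepancy in $\pi_1(X)$ is a product of commutators, and the paper gives a separate sequence of moves (Figure~\ref{fig191}) that adjusts the last circle by one commutator $[\alpha,\beta]$ at a time, again using loops $\alpha,\beta$ lifted to $F$. Your bounding-surface-and-handle-decomposition scheme does not supply either of these ingredients, and without them there is no way to guarantee that the allowed moves produce circles in the right isotopy classes.
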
 

There are two local models around an indefinite fold circle without cusps; yielding them to be marked as \emph{untwisted} (even) or \emph{twisted} (odd). Akin to the zero locus (the singular locus) of a near-symplectic form \cite{P2}, the number of untwisted components of an indefinite fibration on a closed oriented $4$--manifold $X$ is congruent modulo $2$ to \mbox{$1 + b_1(X) + b^+_2(X)$;}  see Proposition~\ref{untwisted}. The full version of the above theorem, Theorem~\ref{dei}, will show that this necessary condition on the number of untwisted components is also sufficient; we can adjust our algorithm to realize $Z$ as the round locus of an indefinite fibration with prescribed local models.

\smallskip 
Broken Lefschetz fibrations and pencils were introduced by Auroux, Donaldson and Katzarkov in \cite{ADK}, where they proved that every near-symplectic $4$--manifold admits a directed broken Lefschetz pencil with embedded round image using approximately holomorphic geometry.  In \cite{B2}, the first author of this article, using the work of the second author in \cite{Sa}, gave an elementary proof of the existence of broken Lefschetz pencils on near-symplectic $4$--manifolds via singularity theory, and also  established that every closed oriented $4$--manifold admits a broken Lefschetz fibration. A number of alternate proofs of these existence results and their improvements quickly followed: Building on Gay and Kirby's earlier work on \emph{achiral} Lefschetz fibrations \linebreak in \cite{GK1} (where Lefschetz singularities with opposite orientation are allowed), which made extensive use of round \mbox{$2$--handles,} the existence of broken Lefschetz fibrations with \emph{directed, embedded round image} on every closed oriented $4$--manifold was shown in \cite{L, B3, AK}. An alternate singularity theory proof, backed by Cerf theory, was later given in \cite{GK2} to obtain \emph{fiber-connected} broken Lefschetz fibrations. 

Despite this variety however, all the existence proofs so far either made use of geometric results which did not yield to explicit constructions, or fell short of producing broken Lefschetz fibrations/pencils with \emph{simplified} topologies. Handlebody proofs made essential use of Eliashberg's classification of over-twisted contact structures \cite{Eliashberg} and Giroux's correspondence between open books and contact structures \cite{Giroux}, making these procedures non-explicit. Singularity theory proofs came with rather explicit algorithms, but did not succeed in producing a \emph{directed} indefinite fibration with \emph{embedded} round locus. Moreover, none of these works could reproduce an important aspect of Auroux, Donaldson and Katzarkov's pencils: in \cite{ADK}, the authors were able to build their broken Lefschetz pencils so that the \mbox{$1$--dimensional} round locus would coincide with the zero locus (singular locus) of the near-symplectic form. 

By incorporating our new algorithms to simplify the topology of indefinite fibrations, we will improve on the singularity theory approach to derive purely topological and explicit constructions:

\begin{thm} \label{thirdquotedthm} 
Let $X$ be a closed oriented $4$--manifold and $Z$ be a (non-empty) null-homologous closed oriented $1$--dimensional submanifold of $X$. Then, there exists a fiber-connected, directed broken Lefschetz fibration \mbox{$f\colon X \to S^2$} with embedded round image, whose round locus $Z_f$ matches $Z$. Given any generic map from $X$ to $S^2$, such $f$ can be derived from it by an explicit algorithm. If $X$ admits a near-symplectic form $\omega$ with non-empty zero locus $Z_{\omega}$, then there exists a fiber-connected directed broken Lefschetz pencil $f$ on $X$ with  embedded round image, whose round locus $Z_f$ matches $Z_{\omega}$ and $\omega([F])  > 0$ for any fiber $F$ of $f$.
\end{thm}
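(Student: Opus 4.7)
The plan is to reduce both parts of Theorem~\ref{thirdquotedthm} to Theorem~\ref{secondquotedthm}, supplying the appropriate initial indefinite fibration in each case. For the first assertion, I would begin with an arbitrary generic map $f_0\colon X\to S^2$ and first homotope it to an indefinite fibration with non-empty round locus: definite folds are removed (or converted to indefinite folds) using the standard moves developed earlier in the paper, and if no round locus is yet present, a flip applied to a Lefschetz singularity produces a fold arc. Together with the prescribed null-homologous submanifold $Z\subset X$, Theorem~\ref{secondquotedthm} then outputs, through an explicit sequence of always-realizable moves, the desired fiber-connected, directed broken Lefschetz fibration $f\colon X\to S^2$ with embedded round image and $Z_f=Z$.

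For the near-symplectic assertion, the plan is to start from a broken Lefschetz pencil $f_0$ on $(X,\omega)$ provided by the singularity-theoretic construction of \cite{Sa,B2}, which may be arranged so that $\omega$ is positive on the fibers of $f_0$. Since $\omega$ is near-symplectic, its zero locus $Z_\omega$ is null-homologous. Blowing up the base points of $f_0$ yields a broken Lefschetz fibration $\tilde f_0$ on $\tilde X=X\# k\,\CPb$ with the exceptional divisors as sections; applying Theorem~\ref{secondquotedthm} to $\tilde f_0$ with $Z=Z_\omega$ produces a fiber-connected, directed broken Lefschetz fibration $\tilde f$ on $\tilde X$ with embedded round image and round locus exactly $Z_\omega$. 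Blowing down the exceptional sections then recovers a broken Lefschetz pencil $f$ on $X$ with all the asserted properties. The positivity $\omega([F])>0$ is inherited from $f_0$: every move in the algorithm is supported in a disk in the base $S^2$, so the fibers outside that disk are unchanged in the $1$-parameter family, and thus the homology class $[F]\in H_2(X;\Z)$ of a generic regular fiber is preserved throughout the homotopy (and, trivially, under the final blow-down).

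The main technical obstacle is to preserve the section structure of $\tilde f_0$ through the simplifying homotopy, so that the concluding blow-down is legitimate. Since each move in Theorem~\ref{secondquotedthm} takes place in a small base disk, I expect one arranges by genericity that the projections of the exceptional sections avoid these disks, or perturbs the sections during the homotopy, ensuring that sections persist into the simplified fibration. A secondary bookkeeping task is to confirm, for each move type used (flip, unsink, cusp merge, push, criss-cross braiding, and the listed Reidemeister moves), that the regular fiber homology class is indeed invariant; this is essentially automatic from the local-in-the-base nature of the moves but warrants explicit verification in support of the positivity claim.
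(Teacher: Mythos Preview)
Your approach to the first assertion is essentially the paper's: start from a generic map, eliminate definite folds via \cite{Sa,Sa2}, then feed the resulting indefinite fibration into Theorem~\ref{secondquotedthm}. One terminological slip: a \emph{flip} applies to a fold arc, not a Lefschetz point; to create round locus from a Lefschetz singularity you want the \emph{wrinkling} move $W$ (or a birth).

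For the near-symplectic part, there is a genuine gap in your section-preservation argument. You write that one can arrange ``by genericity that the projections of the exceptional sections avoid these disks'' in the base. But exceptional spheres are \emph{sections}: they project \emph{onto all of} $S^2$, so their images meet every base disk, however small. No amount of genericity can make a section miss an open subset of the base. Your fallback of ``perturbing the sections during the homotopy'' is not obviously sufficient either, since the homotopy moves (e.g.\ cusp merges, criss-cross braidings, the large isotopies swinging fold arcs over the poles in Theorem~\ref{mainthm2}) alter the fibration substantially over the fibers that the sections necessarily hit.

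The paper resolves this by working in the \emph{domain} rather than the base. One does not start from an arbitrary pencil from \cite{Sa,B2}; instead one builds a specific generic map $h$ on the blow-up $\tilde X$ by hand, so that on tubular neighborhoods $N_j$ of the exceptional spheres $E_j$ the map is already the standard radial projection. Then one runs the entire simplification algorithm on $\tilde X \setminus \Int\bigl(\bigcup_j N_j\bigr)$, a manifold with boundary on which $h$ is a submersion near $\partial N_j$. Remark~\ref{remark:boundary} guarantees that all the moves in Theorems~\ref{mainthm1}, \ref{mainthm2} and in the definite-fold elimination go through in this relative setting. Since the map is untouched on each $N_j$, the $E_j$ remain sections of the final fibration $\tilde f$, and blowing down is legitimate. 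The positivity $\omega([F])>0$ is arranged at the outset by taking the original surface $F$ Poincar\'e dual to an integral approximation of $\omega$, so that $\omega([F]) = [\omega]^2 > 0$; your invariance-of-fiber-class argument then finishes it.
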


We will prove the two parts of this theorem in stronger forms in Theorems~\ref{thmA} and~\ref{thmB}, and Corollaries~\ref{simpBLF} and~\ref{simpBLP}.

\vspace{0.1in}
In the last section, we turn to trisections of $4$--manifolds introduced by Gay and Kirby in \cite{GK4}, which are $4$--dimensional analogues of Heegaard splittings of \mbox{$3$--manifolds.}  Just like how one can study Heegaard splittings as certain Morse functions, or as decompositions into two standard handlebodies along with boundary diffeomorphisms, or as Heegaard diagrams, trisections can be studied in three different ways: as certain generic maps (trisected Morse $2$--functions), decompositions into three standard handlebodies along with pairwise partial boundary diffeomorphisms, and trisection diagrams \cite{GK4}. Adopting the first perspective, we simply refer to a trisected Morse $2$--function yielding a trisection decomposition as a \emph{trisection}. This allows us to study trisections as ``almost'' indefinite fibrations, with special topology: they are generic maps to the disk, where a single definite fold circle along the boundary of the disk encloses the image of a fiber-connected, outward-directed indefinite generic map, that can be split into three slices (slicing the disk from a point in the innermost region) so that each sector contains $g'$ fold arcs, $g'-k'$ of which contain a single cusp; see Figure~\ref{fig521}. The preimages of these three slices are the three solid handlebodies $X_i\cong \natural^k (S^1 \times B^3)$ of the decomposition $X=X_1 \cup X_2 \cup X_3$.

We will provide various algorithmic constructions of trisections of $4$--manifolds, based on our simplifications of generic maps through generic homotopy moves.  Our main result establishes a correspondence between simplified broken Lefschetz fibrations and trisections:

\begin{thm} \label{fourthquotedthm} 
Let $X$ be a closed oriented connected $4$--manifold. If there is a genus--$g$ simplified broken Lefschetz fibration $f\colon X \to S^2$ with $k \geq 0$ Lefschetz singularities and $\ell \in \{0, 1\}$ round locus components, then there is a simplified \mbox{$(g',k')$--trisection} of $X$, with 
$(g',k')=(2g+k-\ell+2, 2g-\ell)$. If $X$ admits a simplified $(g', k')$--trisection, then there is a fiber-connected, directed broken  Lefschetz fibration $f\colon X \to S^2$ with embedded round image, which has regular fibers of highest genus $g$ and with $k$ Lefschetz singularities, where $g=g'+2$ and $k=3g'-3k'+4$.
\end{thm}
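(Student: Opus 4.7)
I would prove the two implications separately, in each case building an explicit sequence of always-realizable moves from Theorem~\ref{firstquotedthm} and its refinements, and then verifying the numerical invariants by a handle-counting bookkeeping argument.

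For the direction from BLF to trisection, suppose $f\colon X\to S^2$ is a simplified genus-$g$ BLF with $k$ Lefschetz singularities and $\ell \in \{0,1\}$ round circles. First, apply an unsink move at each Lefschetz singularity to replace it by a short indefinite fold arc with two cusps; this turns $f$ into an indefinite Morse $2$-function $X \to S^2$ with no Lefschetz points. Second, view $S^2$ as the union of two disks glued along their common boundary, where the complementary disk is a small regular-value disk inside the lower-genus hemisphere (or an arbitrary regular disk if $\ell=0$); restricting attention to the other disk $D^2$ yields a map $X\to D^2$ whose preimage of a small collar of $\partial D^2$ is a standard radial piece, with $\partial D^2$ itself a definite fold circle. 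Third, apply the Reidemeister-type moves $\mathrm{R2}^0, \mathrm{R2}^1, \mathrm{R2}_2, \mathrm{R3}_2, \mathrm{R3}_3$ together with push and criss-cross braiding to rearrange the singular image in $D^2$ into the standard trisection shape: three congruent sectors meeting at a central regular-value point, each containing $g'$ fold arcs with exactly $g'-k' = k+2$ of them carrying a single cusp. A direct handle count across each sector then identifies its preimage with $\natural^{k'}(S^1\times B^3)$ for $k' = 2g-\ell$, while tracking the genus drop from the central Heegaard surface out to the boundary sphere gives $g' = 2g+k-\ell+2$.

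For the converse direction, start with a simplified $(g',k')$-trisection $\tau\colon X\to D^2$. First cap off the outer definite fold circle by gluing on the standard radial model on a $4$-ball to extend $\tau$ to a map $X \to S^2$, whose interior singularities are still only indefinite folds and cusps. Second, use push moves to bring pairs of the $3(g'-k')$ trisection cusps together on common fold arcs and apply cusp merge at each pair, producing Lefschetz singularities and hence an indefinite fibration in the paper's sense. Third, apply Theorem~\ref{firstquotedthm} together with Theorems~\ref{mainthm1} and~\ref{mainthm2} to homotope the result into a fiber-connected, directed simplified BLF with embedded round image. The fiber genus $g$ goes up by $2$ during this process, one $+1$ coming from absorbing the outer definite-fold collar into the regular fiber and another $+1$ from the embedded-round-image step, giving $g = g'+2$. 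The final Lefschetz count emerges from combining the $3(g'-k')$ cusp merges with a fixed number of auxiliary flip/unsink pairs introduced by the simplification algorithm, producing $k = 3g'-3k'+4$.

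The main obstacle in both directions is the numerical bookkeeping: every elementary move slightly modifies the handle decomposition induced by the map, and one needs an invariant to keep track of. The natural device is a handle charge assigned to each fold arc, cusp, and Lefschetz singularity, summing to the number of $1$-handles in the induced handle decomposition across any transversal arc of the base image; this charge is preserved by always-realizable moves and matches the two sides of the correspondence, yielding the stated formulas. The most delicate step, in the first implication, is ensuring that the rearrangement genuinely produces three congruent sectors with the correct number of cusps in each, for which the flexibility supplied by the push and criss-cross braiding moves is essential. In the second implication, the delicate step is verifying that the excess Lefschetz count beyond the obvious $3(g'-k')$ cusp merges is exactly $+4$, which comes out of the simplification algorithm of Theorem~\ref{firstquotedthm} applied to the specific starting configuration and is established by following that algorithm step-by-step.
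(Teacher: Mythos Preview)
Your proposal has genuine gaps in both directions, stemming from misidentifications of the basic moves and, more seriously, from missing the key construction that produces or eliminates the definite fold circle.

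For the BLF $\to$ trisection direction: the unsink move trades a \emph{cusp} for a Lefschetz point, not the reverse; the move that perturbs a Lefschetz singularity into a small fold circle with three cusps is the wrinkling move $W$. More critically, restricting to a disk whose boundary is a regular circle does \emph{not} produce a definite fold along $\partial D^2$ --- the map is a submersion there, not a fold. The paper instead uses an explicit ``folding'' construction: over an annular collar $S^1\times[-1,1]$ in the base where the fibration is trivial with fiber $\Sigma_{g-\ell}$, one replaces the projection by the map $(t,x)\mapsto (h(x)\cos(\pi t/2)+1)$ built from a standard Morse function $h$ on the fiber. This single step is what introduces the outermost definite fold circle together with $2(g-\ell)+1$ new indefinite fold circles, and is the entire source of the $2g-\ell$ in the formula. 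After this, the original round circle is flip-and-slipped (unsinking only one of four cusps), and each Lefschetz point is wrinkled; the count $(2g+k-\ell+2,\,2g-\ell)$ then falls out directly with no ``handle charge'' bookkeeping needed.

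For the trisection $\to$ BLF direction: the definite fold is already the image of part of $X$, so there is nothing to ``cap off by gluing a $4$--ball''. The paper instead embeds the disk in $S^2$ and applies a definite-fold version of flip-and-slip (birth, cusp merge, fold merge, death) to convert the definite circle into an indefinite one, then moves it to the innermost position and flip-and-slips again. Also, cusp merge does not produce Lefschetz points; it is a band move on the singular set. The $3(g'-k')+4$ Lefschetz points arise simply by unsinking the $3(g'-k')$ trisection cusps plus the $4$ cusps created by the final flip-and-slip; the two extra genus come from the two flip-and-slips. Your appeal to Theorems~\ref{mainthm1}--\ref{mainthm2} would in fact destroy control over $g$ and $k$, since those algorithms freely introduce additional flips and unsinks.
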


\noindent Here the genus of a simplified broken Lefschetz fibration is the genus of the highest genus regular fiber, and when $\ell=0$, it is an honest Lefschetz fibration. 

We will prove slightly stronger versions of both directions of the above theorem in Theorem~\ref{BLFtoTS} and Proposition~\ref{TStoBLF}. A by-product of our construction of trisections from simplified broken Lefschetz fibrations is the existence of \emph{simplified trisections} on arbitrary $4$--manifolds, which, combined with our algorithmic constructions of simplified broken Lefschetz fibrations, can be obtained from any given generic map; see Corollary~\ref{STSexistence}. Simplified trisections constitute a subclass of the special Morse \mbox{$2$--functions} yielding trisections, where for a simplified trisection, we in addition have embedded singular image (``no non-trivial Cerf boxes between sectors'') and cusps only appear in triples on innermost circles; see Figure~\ref{fig511} and cf.\ \cite{GK4}. In the extended Example~\ref{exheegaard}, we demonstrate natural examples of simplified trisections and broken Lefschetz fibrations on product $4$--manifolds of the form $S^1 \times Y^3$, which are derived from a Heegaard splitting of the $3$--manifold $Y$.

Finally, building on the above correspondence between simplified broken Lefschetz fibrations and trisections, we construct some interesting families of trisections. In Corollary~\ref{MZquestion}, we show that there are infinitely many homotopy inequivalent $4$--manifolds admitting $(g',k')$--trisections, for each $g' \geq 3$ and $g' -2 \geq k' \geq 1$. In contrast, any $(g',k')$ trisection with $g' < 3$ is standard \cite{MZ}. Furthermore, we show that there are trisections on complex rational surfaces, the three standard sectors of which can be re-glued differently to produce infinitely many homeomoprhic but not diffeomorphic $4$--manifolds; see Corollary~\ref{exotic}.

\vspace{0.2in}
\noindent \textit{Acknowledgments.} The results in this article have been promised for a long while, and several parts were presented by the authors in 2012 Nagano Singularity Conference, 2012 Japan Topology Symposium, and 2013 Bonn Geometry and Topology of \mbox{$4$--manifolds} Conferences. We cordially thank all our colleagues who gently kept pushing us to complete this project. The first author was partially supported by the NSF grants DMS-0906912 and DMS-1510395. The second author has been supported in part by JSPS KAKENHI Grant Numbers JP23244008, JP23654028, JP15K13438, JP16K13754, JP16H03936, JP17H01090.

\vspace{0.1in}
% ==========================================================================================================
\section{Preliminaries} \label{Sec:Preliminaries}
% ==========================================================================================================

Here we review the definitions and basic properties of generic maps to surfaces, broken Lefschetz fibrations and pencils, 
along with moves which modify them in smooth $1$--parameter families. All the manifolds and maps are assumed to be smooth.

\subsection{Fold, cusp and Lefschetz singularities; indefinite fibrations} \
%=========================================================================

Let $f\colon X \to \Sigma$ be a smooth map between compact connected oriented manifolds of dimensions four and two. In the following, we assume that $f^{-1}(\partial \Sigma) = \partial X$ and $f$ is a submersion on a neighborhood of $\partial X$.
Let $y\in \Int{X}$ be a \emph{singular point} of $f$, i.e.\ $\rank{df_y} < 2$. 
The map $f$ is said to have a \emph{fold singularity} at $y$ if there are local coordinates 
around $y$ and $f(y)$ in which the map is given by
\[(t, x_1, x_2, x_3) \mapsto (t, \pm x_1^2 \pm x_2^2 \pm x_3^2) ,\]
and a \emph{cusp singularity} if the map is locally given by
\[(t, x_1, x_2, x_3) \mapsto (t, x_1^3 + t x_1 \pm x_2^2 \pm x_3^2) . \]
A fold or a cusp point $y$ is \emph{definite} if coefficients of all quadratic terms 
in the corresponding local model are of the same sign, \emph{indefinite} otherwise. 
Note that an indefinite cusp is always adjacent to indefinite fold arcs.

A special case of Thom's transversality implies that any smooth map from 
an $n \geq 2$ dimensional space to a surface can be approximated arbitrarily 
well by a map with only fold and cusp singularities \cite{Lev0, T, Wh}. 
Such $f \colon X \to \Sigma$ is called a \emph{generic map} (or an \emph{excellent map}, 
or --more recently-- a \emph{Morse $2$--function}). The \emph{singular locus} $Z_f$ 
of $f$ is assumed to be in $\Int{X}$ and it is a disjoint union of finitely many circles, which are 
composed of finitely many cusp points, and arcs and circles of fold singularities. 
We call $f$ an \emph{indefinite generic} map if all of its fold singularities are indefinite.

On the other hand, the map $f$ is said to have a \emph{Lefschetz singularity} at a point 
$y \in \Int{X}$ if there are orientation preserving local coordinates around $y$ and $f(y)$ 
so that $f$ conforms to the complex local model
\[(z_1, z_2) \mapsto z_1 \, z_2 .\]
A \emph{broken Lefschetz fibration} is a surjective map $f \colon X \to \Sigma$ which 
is only singular along a disjoint union of finitely many Lefschetz critical points and indefinite fold circles. 
A \emph{broken Lefschetz pencil} is then defined for $\Sigma = S^2$, when there is a non-empty, 
finite set $B_f$ of {\emph base points} in $X$, where $f$ conforms to the complex local model
\[(z_1, z_2) \mapsto z_1 / z_2, \]
and $f: X \setminus B_f \to S^2$ has only Lefschetz and indefinite fold singularities \cite{ADK}. 
 
Since the set of generic maps is open and dense in an appropriate mapping space 
endowed with the Whitney $C^{\infty}$ topology, every broken Lefschetz fibration can 
be approximated (and hence homotoped) to a map with only fold and cusp singularities. 
The works of the authors in \cite{Sa} and in \cite{B2} showed that when the base 
$\Sigma=S^2$, one can effectively eliminate the definite fold singularities and cusps in 
order to homotope a generic map to a broken Lefschetz fibration, implying the abundance 
of broken Lefschetz fibrations. Moreover, as shown in \cite{L}, one can trade an indefinite cusp 
point with a Lefschetz singularity, and locally perturb a Lefschetz singularity into a simple indefinite 
singular circle with three cusp points, allowing one to switch between indefinite generic maps and 
broken Lefschetz fibrations in a rather standard way.

With these in mind, we call a smooth surjective map $f\colon X \to \Sigma$ an 
\emph{indefinite fibration}  if it is an indefinite generic map outside of a finite collection of Lefschetz singular points $C_f$. 
For an indefinite fibration $f$, we will call the $1$--dimensional singular locus $Z_f$ the \emph{round locus}, 
and its image $f(Z_f)$ the \emph{round image} of $f$. Note that the restriction of an indefinite fibration to its 
round locus is an immersion except at the cusp points. Its round image is, generically, a collection
of cusped immersed curves on 
$\Sigma$ with transverse double points off the cusp points, which we will assume to be the case hereon. 
We can moreover assume that there is at most one Lefschetz singular point on any fiber, and 
also, Lefschetz critical values and round image are disjoint.

The \emph{base diagram}
of an indefinite fibration is the pair $(\Sigma, f(Z_f \cup C_f))$, 
where the image of any indefinite fold arc or circle is  normally oriented by an arrow, 
which indicates the direction in which the topology of a fiber changes by a $2$--handle attachment 
when crossing over the fold from one side to the other. This means that a generic fiber over the 
region the arrow starts, if connected, has one higher genus than the generic fiber over the region the arrow points into; 
hence the terminology, \emph{higher} and \emph{lower sides} \cite{B1}.

For an indefinite fibration $f\colon X \to \Sigma$, we say that   
\begin{itemize}
\item $f$ is \emph{outward-directed} (resp.\ \emph{inward-directed}), 
if the round image of $f$ is contained in a $2$--disk $D$ in $\Sigma$ such that the complement of a regular value 
$z_0 \in D$ can be non-singularly foliated by arcs oriented from $z_0$ to 
$\partial D$, which intersect the image of each fold arc transversely in its normal direction (resp.\ the opposite direction),
\item $f$ has \emph{embedded round image}, if $f$ is injective on its round locus,
\item $f$ is \emph{fiber-connected}, if every fiber $f^{-1}(z)$, $z \in \Sigma$, is connected.
\end{itemize}
We simply say $f$ is \emph{directed} if it is either outward or inward-directed; when $\Sigma=S^2$, 
one clearly implies the other. 

All these properties are essentially about the round locus and not about Lefschetz critical points. 
Importantly, the topology of an indefinite fibration is much simplified when $f$ satisfies these additional properties. In particular, any fiber-connected $f\colon X \to S^2$ with embedded round image and connected round locus (which implies directed), with all Lefschetz singularities on the higher side, can be captured by simple combinatorial data: an ordered tuple of loops on the highest genera generic fiber (say, the one over $z_0$) \cite{B1, W}. Such $f\colon X \to S^2$ is said to be \emph{simplified}.

\subsection{Near-symplectic structures}  \
%========================================================================

A closed $2$--form $\omega$ on an oriented $4$--manifold $X$ is said to be \emph{near-symplectic}, 
if at each point $x \in X$, either $\omega_x^2>0$ (non-degenerate), or $\omega_x = 0$ and the 
intrinsic gradient $\nabla \omega \colon T_x X \to \Lambda^2 (T^* X)$ as a linear map has rank $3$. 
The \emph{zero locus} of $\omega$, i.e.\ the set of points $x \in X$ where $\omega = 0$, is a $1$--dimensional 
embedded submanifold of $X$ denoted by $Z_\omega$. 

Take $\R^{4}$ with coordinates $(t, x_1,x_2, x_3)$ and consider the $2$--form 
\[ \Omega = dt \wedge dQ + *\, (dt \wedge dQ), \]
where $Q(x_1, x_2, x_3)= x_1^{2} + x_{2}^{2} - x_{3}^{2}$ 
and $*$ is the standard Hodge star operator on $\Lambda^{2}\R^{4}$. 
Define two orientation preserving 
affine automorphisms $\sigma_{\pm}$ of $\R^4$ by 
\begin{eqnarray*}
\sigma_+(t, x_1, x_2, x_3) & = & ( t + 2\pi, x_1, x_2, x_3) \ \ \mbox{\rm and} \\
\sigma_- (t, x_1, x_2, x_3) & = & ( t + 2\pi, -x_1, x_2, -x_3).
\end{eqnarray*}  
Restrict $\Omega$ to the product of $\R$ and the unit $3$--ball $D^3$. Each 
$\sigma_{\pm}$ preserves $\Omega$ and the map $(t,x_1, x_2, x_3) \mapsto (t,Q(x_1, x_2, x_3))$, 
and thus, induces a near-symplectic form $\omega_{\pm}$ and an indefinite fold map $f_\pm$ 
on the quotient space $N_{\pm} = (\R \x D^3) / \sigma_{\pm}$. 
As shown in \cite{Ho2}, any near-symplectic form $\omega$ around any component of $Z_\omega$ 
is locally (Lipschitz) equivalent to one of the two local near-symplectic models $(N_{\pm}, \omega_{\pm})$. 
The circles in $Z_\omega$ which admit neighborhoods $(N_+, \omega_+)$ are called \emph{untwisted} 
or of \emph{even} type, and the others \emph{twisted} or of \emph{odd} type. 
Similarly, each component of the round locus of a broken Lefschetz fibration/pencil $f$ is locally equivalent to 
$(N_\pm, f_\pm)$, yielding the same terminology \cite{B1, ADK}. 
Labeling the untwisted components with $0$ and the twisted ones by $1$, we obtain the \emph{twisting data} for 
$Z_\omega$ or $Z_f$.

A $4$--manifold $X$ admits a near-symplectic structure $X$ if and only if \mbox{$b^+(X)>0$} \cite{Ho1, ADK}. (So near-symplectic $4$--manifolds constitute a much larger class than the symplectic ones.) In \cite{ADK}, using approximately holomorphic techniques of Donaldson, the authors proved that for a given near-symplectic form $\omega$ on $X$, there is a directed broken Lefschetz pencil $f$ with embedded round image, such that $Z_f$ coincides with $Z_{\omega}$ with the same twisting data, and such that $\omega(F) > 0$ for any fiber $F$ of $f$. They moreover proved a converse to this result by a Thurston--Gompf construction: if $f$ is a broken Lefschetz pencil on $X$ and there is an $h\in H^{2}(X; \R)$ that evaluates positively on every component of every fiber of $f$, then $X$ admits a near-symplectic form $\omega$, such that $Z_\omega$ coincides with the round locus $Z_f$.

\subsection{Trisections of $4$--manifolds} \label{prelim:trisections} \
%=========================================

A \emph{$(g,k)$--trisection} decomposition, with $g \geq k$, of a closed oriented connected $4$--manifold $X$ is a decomposition $X=X_1 \cup X_2 \cup X_3$, such that: \,(i)\, there is a diffeomorphism $\phi_i\colon X_i \to Z_k$ for each $i = 1,2,3$,  and \,(ii)\,  $\phi_i(X_i \cap X_{i+1}) = Y_{k,g}^-$ and \mbox{$\phi_i(X_i \cap X_{i-1}) = Y_{k,g}^+$} for each $i = 1,2,3$ (mod $3$).  Here $Z_k = \natural^k (S^1 \times B^3)$, $Y_k = \partial Z_k = \sharp^k (S^1 \times S^2)$, and  $Y_k = Y_{k,g}^+ \cup Y_{k,g}^-$ is the standard genus $g$ Heegaard splitting of $Y_k$ obtained by stabilizing the standard genus $k$ Heegaard splitting $g-k$ times. Note that $X_1 \cap X_2 \cap X_3$ is a closed genus--$g$ surface, and $g$ is said to be the \emph{genus of the trisection}.

\begin{figure}[htbp!] 
\begin{center}
\psfrag{a}{(a)}
\psfrag{b}{(b)}
\includegraphics[width=\linewidth,height=0.3\textheight,
keepaspectratio]{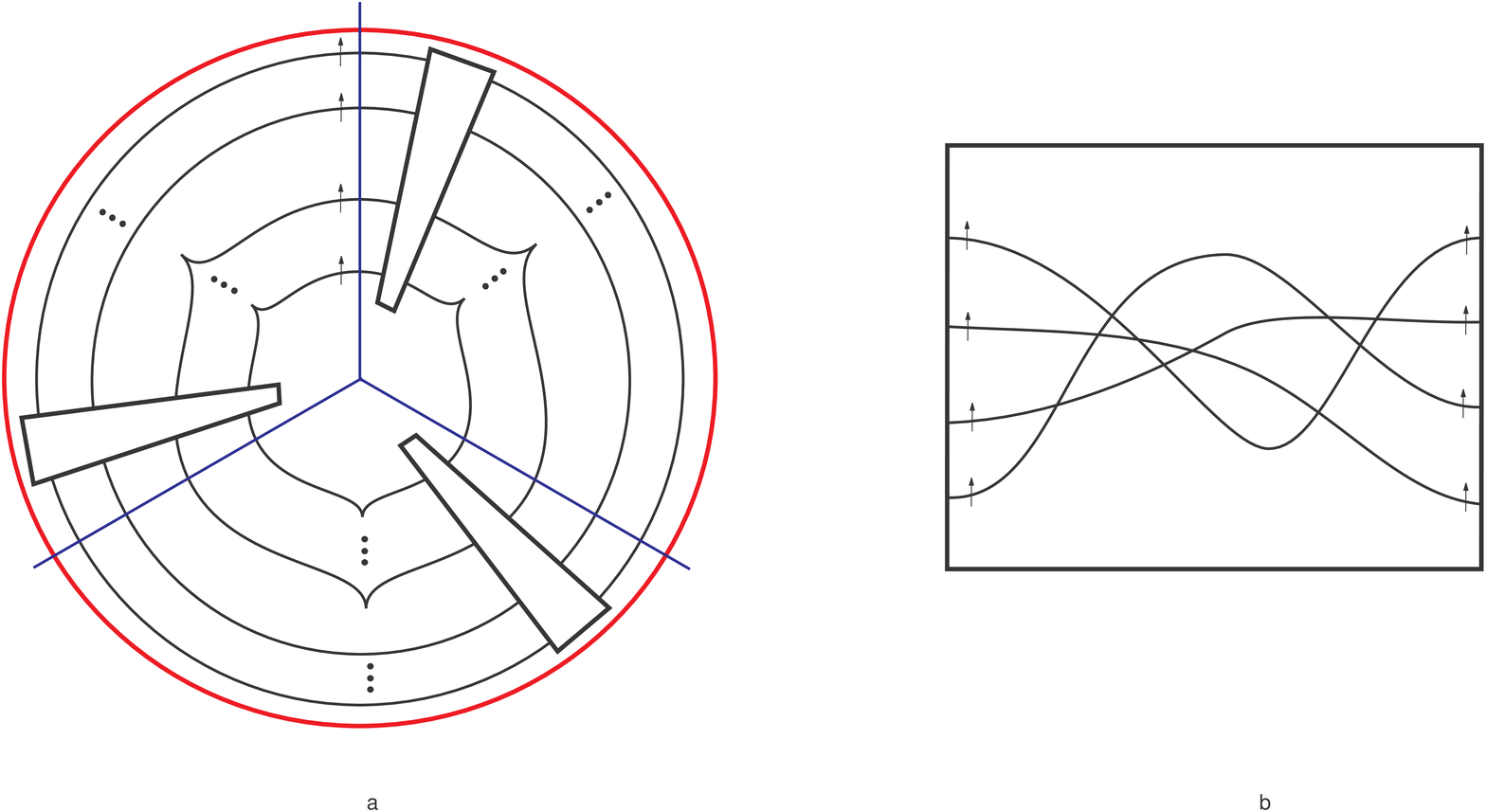}
\caption{(a) Image of a generic map corresponding to a trisection: the outermost circle is the
image of the definite fold circle, and in each box there is an arbitrary Cerf graphic
in the sense of \cite[\S3]{GK4}. The three half lines divide the image
into three parts and their inverse images correspond to $X_1, X_2$ and $X_3$. (b) An example of a Cerf graphic.}
\label{fig521}
\end{center}
\end{figure}

Trisections of $4$--manifolds are introduced by Gay and Kirby in \cite{GK4}. They are to \mbox{$4$--manifolds}, what Heegaard splittings are to \mbox{$3$--manifolds.} Similar to how one can study Heegaard splittings in terms of certain Morse functions, trisections can be studied in terms of certain generic maps,  called \emph{trisected Morse $2$--functions} in \cite{GK4}. In this article, we will adopt this approach, and simply call any trisected Morse $2$--function on $X$ a \emph{trisection} of $X$. This allows us to regard trisections as ``almost'' indefinite directed generic maps. (Bearing in mind that many non-isotopic trisected Morse $2$--functions can yield equivalent trisection decompositions.) Namely, in our language, a trisection corresponds to a generic map to the disk, with an embedded  \emph{definite} fold image enclosing the image of an outward-directed indefinite generic map, with a balanced distribution of cusps to three sectors as in Figure~\ref{fig521}. The total numbers of indefinite fold arcs and indefinite fold arcs without cusps in each sector are $g$ and $k$, respectively, where the arcs with cusps contain a single cusp. Note that the number of fold circles for this special kind of a generic map does \emph{not} need to be equal to $g$, since raising/lowering  them in the Cerf boxes, one may have a fold circle image wrapping around multiple times.

Remarkably, Gay and Kirby showed that just like the Reidemeister--Singer theorem for Heegaard splittings of $3$--manifolds, trisections of $4$--manifolds are unique up to \emph{stabilization}, an operation which can also be described as an introduction of a nested triple of ``wrinkles'' \cite{EM} (or ``eyes''); see \cite{GK4}.

\vspace{0.1in}
\section{Homotopies of indefinite fibrations and base diagram moves} 
%========================================================================

In this section, we will discuss certain smooth $1$--parameter families of indefinite fibrations, each of which amounts to a ``move'' from the initial fibration to the final one in the family. Many of these homotopy moves have been studied in varying levels of details by  Levine, Hatcher--Wagoner, Eliashberg--Mishachev, Lekili, Williams, Gay--Kirby, Behrens--Hayano, and the authors of this article in \cite{B2, B1, BeH, GK2, HW, L, Lev, Sa, Sa2, W, W2}. Our goal here is to compile a comprehensive list of moves (with standardized terminology and notation) we can utilize in the rest of the article, for which we will refer to complete arguments in the existing literature,  or provide them if needed. At the end of the section, we will add some \emph{combination} moves to this list, which will play a key role in our topological modifications.

As we are largely interested in moves that will change the general topology of the fibration, 
we will often capture them by studying their singular image. A \emph{base diagram move} is a transition  from $(\Sigma, f_0(Z_{f_0}  \cup C_{f_0}))$ to $(\Sigma, f_1(Z_{f_1} \cup C_{f_1}))$ realized by a smooth  $1$--parameter family $f_t : X \to \Sigma$, $t \in [0, 1]$, such that $f_t$ is an indefinite fibration for each $t$  except for finitely many values in $(0, 1)$. 
(Recall that we assume indefinite fibrations are also injective on their singular locus except possibly at fold double points.) Such a transition essentially happens locally around one point on $\Sigma$;  however, the modification of the map $f_t$ may occur around one, two, or three points in the domain $X$. Following singularity theory conventions, we will call it a \emph{mono-germ move} if the move concerns a single point in $X$, and a \emph{multi-germ move} otherwise.

We will often focus on only parts of the base diagram. Any Lefschetz critical value will be marked by a small cross sign in 
these diagrams. Figure~\ref{Figure: PushCuspsAway} shows two examples. The \emph{unsink} move of \cite{L}, 
which trades an indefinite cusp to a Lefschetz critical point, is a mono-germ move. It clearly changes the singular locus and 
the base diagram, though the isotopy type of the round locus stays the same. The \emph{push} move of \cite{B1}, 
which drags the Lefschetz critical point until its image is on the opposite side of the arrow of the round image, is a 
multi-germ move. In this case, the base diagram changes, but the isotopy type of the singular set does not. 

\begin{figure}[htbp!] 
\psfrag{u}{unsink}
\psfrag{p}{push}
\psfrag{s}{\bang sink}
\psfrag{r}{\quad \bang pull}
\begin{center}
\includegraphics[width=\linewidth,height=0.2\textheight,
keepaspectratio]{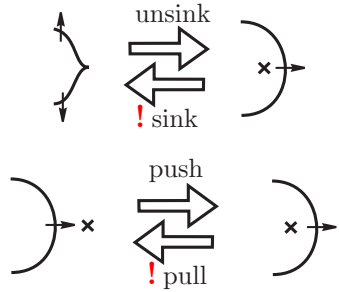}
\caption{Unsink/sink and push/pull moves.}
\label{Figure: PushCuspsAway}
\end{center}
\end{figure}

Both of these are examples of base diagram moves that are \emph{always-realizable} \cite{L, B1}. 
That is, given a local configuration of a base diagram as on the left hand side of Figure~\ref{Figure: PushCuspsAway}, 
we can always find a $1$--parameter family of smooth maps that realizes the relevant base diagram move. On the other 
hand, the \emph{pseudo-inverses} of these two moves, \emph{sink} and \emph{pull} moves in 
Figure~\ref{Figure: PushCuspsAway}, are not always-realizable. A necessary and sufficient condition 
for a sink move is given in terms of  vanishing cycles in \cite{L}. A pull move can be realized if and only if the vanishing 
cycle for the Lefschetz critical point (on a reference fiber on the higher side) and that for the round locus can be chosen to be disjoint.

When there are additional conditions for a move to be realized, we indicate it by an exclamation mark; see e.g.\ 
the pseudo-inverses in Figure~\ref{Figure: PushCuspsAway}. Otherwise, the move is understood to be always-realizable.  
Importantly, \textit{we will only use always-realizable base diagram moves} (some of which will be a combination 
of simpler moves) in this paper. Note that using unsink and push moves, we can always trade an indefinite cusp to 
a Lefschetz critical point, and we can push a Lefschetz critical value across any round image whose arrow is pointing towards it. 
Thus, we will not bother with cusps or Lefschetz critical points in such local diagrams. With this in mind, the collection of moves we cover in the next two subsections can be seen to be sufficient to pass from any given indefinite fibration to another (up to isotopy) by \cite{W}, as we also include  Reidemeister type moves that can appear in bifurcations.

\subsection{Mono-germ moves for indefinite fibrations} \
%================================================================================

Figure~\ref{Figure: HomotopyMoves} depicts several well-known mono-germ moves, 
which appear in generic homotopies, and are studied in detail in \cite{Lev, HW, EM, L, W, GK2, BeH}. 

\begin{figure}[htbp!] 
\psfrag{b}{birth}
\psfrag{d}{death}
\psfrag{f}{flip}
\psfrag{u}{unflip}
\psfrag{c}{\bang cusp merge}
\psfrag{fl}{\bang fold merge}
\psfrag{e}{$\emptyset$}
\begin{center}
\includegraphics[width=\linewidth,height=0.2\textheight,
keepaspectratio]{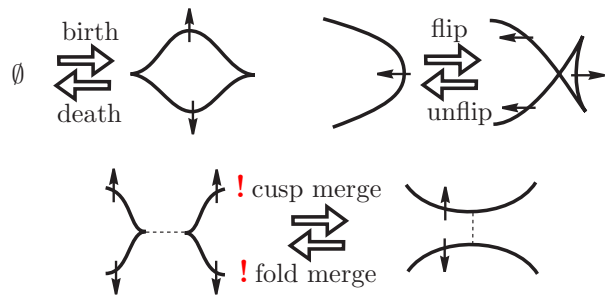}
\caption{Classical mono-germ moves.} 
\label{Figure: HomotopyMoves}
\end{center}
\end{figure}

Note that \emph{birth, death, flip} and \emph{unflip} moves are always-realizable.
For birth and flip moves, see, e.g.\ \cite[Lemmas~3.1 and 3.3]{Sa0}.
For death and unflip moves, \cite[Lemmas~4.7 and 4.8]{GK2} guarantee
that they are realizable under certain conditions. As there are no other singularities appearing
in the local pictures, these conditions are automatically satisfied in our case.
A cusp merge can be performed if and only if there exists a joining curve connecting the pair of cusps
used to eliminate them whose image is depicted by a
dotted line in the diagram; in particular, this move is always-realizable when the fibers over the 
given local disk are connected. 
A necessary and sufficient
condition for a fold merge is that the relevant vanishing cycles intersect transversely at one point 
on a reference fiber over the middle region \cite{L, W, BeH}: in other words, if we take
a vertical oriented line segment in the base depicted in the lower right of
Figure~\ref{Figure: HomotopyMoves}, then its inverse image corresponds to 
a canceling pair of $1$-- and $2$--handles (see also \cite[Lemma~4.6]{GK2}).

Figure~\ref{fig201} shows another example of a mono-germ move from \cite{L} we denote by $W$, called \emph{wrinkling}, which comes from the local perturbation of a Lefschetz critical point. 
(Here the pseudo-inverse $W^{-1}$ may produce a Lefschetz type critical point with the wrong orientation, called an ``achiral Lefschetz critical point''.)

\begin{figure}[htbp!] 
\psfrag{w}{\ \  W}
\psfrag{u}{\bang W$^{-1}$}
\begin{center}
\includegraphics[width=\linewidth,height=0.1\textheight,
keepaspectratio]{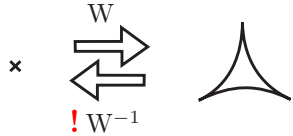}
\caption{Wrinkling move.}
\label{fig201}
\end{center}
\end{figure}

Observe that all these moves, except for flip and unflip, change the set of the singular locus: a birth creates a new circle,  a cusp merge corresponds to a ``band move'' for the set of singular points, etc. Flip and unflip moves, however, both preserve the isotopy type of the singular locus.

\smallskip
\subsection{Multi-germ moves and isotopies of the round locus} \
%=================================================================================

We now introduce several multi-germ moves, where the round locus simply goes through an isotopy, while the topology of the indefinite fibration changes, at times drastically. Many of these moves have already been studied in, e.g., \cite{Lev, HW, B2, GK1, W2, H3, BeH}. Almost all of these multi-germ moves correspond to the well-known Reidemeister moves II and III for link diagrams in knot theory. However, base diagrams are not simple projections of $1$--dimensional submanifolds, but they are the images of round loci under indefinite fibrations. Furthermore, each fold image has a normal orientation. Therefore,  \textit{there are multiple base diagram moves corresponding to a single Reidemeister type move} (even without any need to involve cusps or Lefschetz critical points in general, 
as we pointed out earlier) . 

To have a uniform notation, Reidemeister II moves will be denoted by 
$\mathrm{R2}$, decorated by subindices $0$, $1$ or $2$, which indicate the number of 
fold arcs with normal arrows pointing into the bounded region, whereas their 
pseudo-inverses (which do not have any bounded regions) will be denoted by the 
same index in the superscript. Reidemeister III moves will be denoted by 
$\mathrm{R3}$, decorated by subindices $0$, $1$, $2$ or $3$, which again 
indicate the number of fold arcs with normal arrows pointing into the bounded region. 
Note that the pseudo-inverse of an $\mathrm{R3}_i$ move is an $\mathrm{R3}_j$ 
move with $i+j=3$; see Figures~\ref{Figure: R2} and \ref{Figure: R3}.

\begin{figure}[htbp] 
\begin{center}
\psfrag{1}{\bang $\mathrm{R2}_0$}
\psfrag{2}{$\mathrm{R2}^0$}
\psfrag{3}{\bang $\mathrm{R2}_1$}
\psfrag{4}{$\mathrm{R2}^1$}
\psfrag{5}{$\mathrm{R2}_2$}
\psfrag{6}{\bang $\mathrm{R2}^2$}
\includegraphics[width=\linewidth,height=0.4\textheight,
keepaspectratio]{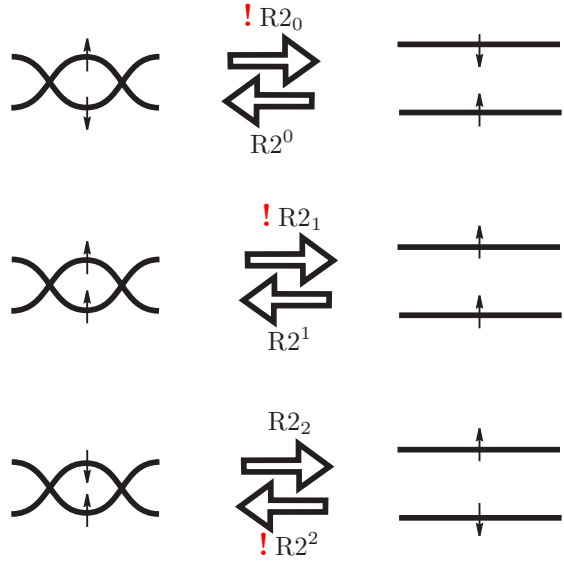}
\caption{Reidemeister II type multi-germ moves.}
\label{Figure: R2}
\end{center}
\end{figure}

\begin{figure}[htbp!] 
\begin{center}
\psfrag{1}{\bang $\mathrm{R3}_0$}
\psfrag{2}{$\mathrm{R3}_3$}
\psfrag{3}{\bang $\mathrm{R3}_1$}
\psfrag{4}{$\mathrm{R3}_2$}
\includegraphics[width=\linewidth,height=0.3\textheight,
keepaspectratio]{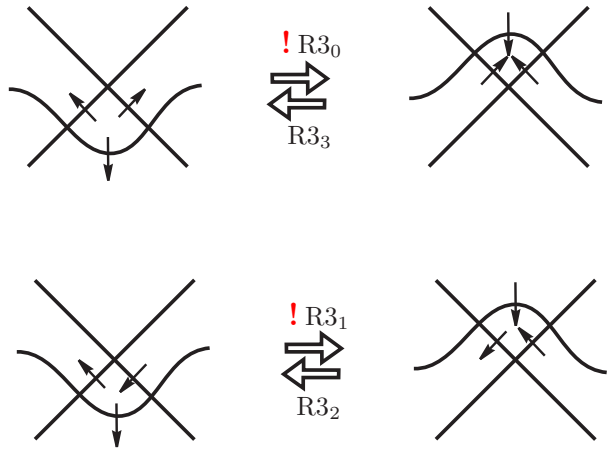}
\caption{Reidemeister III type multi-germ moves.}
\label{Figure: R3}
\end{center}
\end{figure}

Base diagram moves of Reidemeister II type are depicted in Figure~\ref{Figure: R2},  
where only one of each pair is always-realizable. For example, 
for the $\mathrm{R2}_0$ move,
the diagram on the left hand side corresponds, in upward vertical direction, 
to a surface cross interval with a 
$2$--handle and a $1$--handle attached in this order. Horizontally, from left to right, 
the same diagram 
corresponds to the following: first, the order of the two handle attachments 
is switched, then the $2$--handle slides over 
the $1$--handle, and finally the order is switched again. 
So, the $\mathrm{R2}_0$ move is realized if and only if 
the handle-slide trace of the attaching circle of the $2$--handle core 
can be isotoped away from the co-core of 
the $1$--handle.
On the other hand, the diagram on the right corresponds to a handle-slide 
in which the $1$--handle slides 
over the $2$--handle. The trace of the attaching region of a $1$--handle can 
always be isotoped away from a 
$2$--handle, so the move $\mathrm{R2}^0$, the \emph{pseudo-inverse} of 
$\mathrm{R2}_0$, is always-realizable. 
Similar arguments show that the moves $\mathrm{R2}^1$ and $\mathrm{R2}_2$ 
are always-realizable, 
whereas the moves $\mathrm{R2}_1$ and $\mathrm{R2}^2$ are not. (For 
the $\mathrm{R2}_1$ move, see Remark~\ref{rk:R2_1}. The move
$\mathrm{R2}^2$ is not realizable if the attaching circle of the upper
$2$--handle winds along the lower $1$--handle algebraically non-trivially, for
example.)

As to base diagram moves of Reidemeister III type, we have those as depicted in
Figure~\ref{Figure: R3}. We can prove that the move $\mathrm{R3}_3$ is always-realizable 
by an argument similar to that for the move $\mathrm{R2}_2$ above. 
Then, as shown in Figure~\ref{fig251}, the move $\mathrm{R3}_2$ is realized 
as a composition of always-realizable moves $\mathrm{R2}^0$, $\mathrm{R3}_3$ and $\mathrm{R2}_2$. 
So $\mathrm{R3}_2$ move is also always-realizable.
For $\mathrm{R3}_0$ and $\mathrm{R3}_1$ moves, there are necessary \mbox{conditions; cf.\ \cite{W2}.}

\begin{figure}[htbp!] 
\begin{center}
\psfrag{r2}{$\mathrm{R2}^0$}
\psfrag{r3}{$\mathrm{R3}_3$}
\psfrag{r4}{$\mathrm{R2}_2$}
\includegraphics[width=\linewidth,height=0.3\textheight,
keepaspectratio]{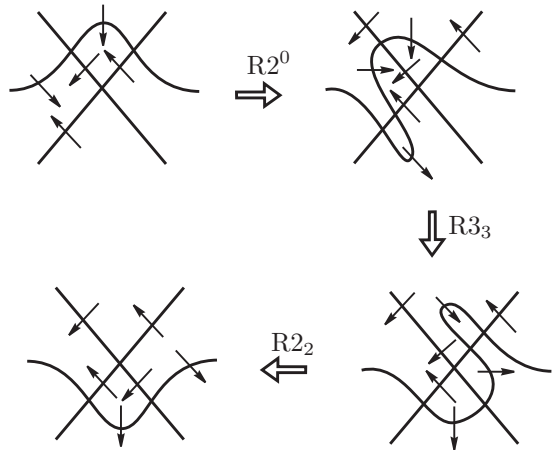}
\caption{Realizing the $\mathrm{R3}_2$ move.}
\label{fig251}
\end{center}
\end{figure}

Lastly, as multi-germ moves involving cusps, we have the \emph{cusp--fold crossing} 
$C$--move and its pseudo-inverse, depicted in Figure~\ref{Figure: C}.  
The $C$--move is always-realizable as seen by an argument in the cusp elimination 
technique of \cite{Lev}; see also \cite[Proposition~2.7]{W2}. The pseudo-inverse
move $C^{-1}$ is not always-realizable, for reasons similar to the case of 
$\mathrm{R2}_0$ or $\mathrm{R2}_1$.

\begin{figure}[htbp]
\centering
\psfrag{c}{$C$}
\psfrag{c1}{\bang $C^{-1}$}
\includegraphics[width=0.95\linewidth,height=0.1\textheight,
keepaspectratio]{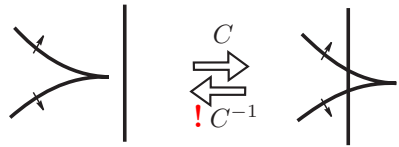}
\caption{Passing a cusp through the round image and its pseudo-inverse. The normal
orientation for the vertical round image component is redundant: both orientations are allowed.}
\vspace{0.2in}
\label{Figure: C}
\end{figure}

\smallskip
\subsection{Some always-realizable combination moves}\label{ss:mixed} \
%=================================================================================

A combination homotopy move for an indefinite fibration consists of a sequence of mono-germ and multi-germ moves. Our first example 
is the \emph{flip and slip move} of \cite[Fig.~5]{B2}, which can be used to turn an inward-directed circle in $D$ inside out, so it becomes outward-directed. The flip and slip consists of a sequence of always-realizable mono-germ and multi-germ moves shown in Figure~\ref{fig151}. 

\begin{figure}[htbp]
\centering
\psfrag{f}{flip}
\psfrag{r}{$\mathrm{R2}_2$}
\psfrag{s}{unsink}
\includegraphics[width=0.8\linewidth,height=0.1\textheight,
keepaspectratio]{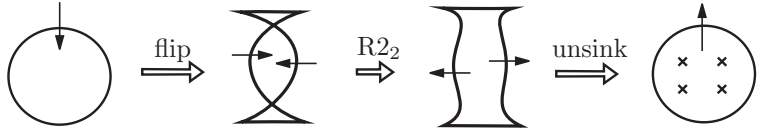}
\caption{Flip and slip.}
\label{fig151}
\end{figure}

Another combination move we will introduce here consists of a sequence of
not necessarily always-realizable moves, but the combined
move itself is always-realizable as a whole, as follows.

\begin{prop}\label{lem:umbilic1}
Let $D$ be a local disk containing the base diagram on the left hand side of 
Figure~\ref{fig431}. Suppose that the fibers over the points
in the region marked with $(\ast)$ are connected.
Then, the \emph{exchange move} depicted in Figure~\ref{fig431} is realizable. 
It is realized by a sequence of two flips, cusp merge, $\mathrm{R3}_1$, 
and unflip moves.
\end{prop}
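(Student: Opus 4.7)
The plan is to realize the exchange move as the composition of the five move-types listed in the statement, and to verify that while cusp merge and $\mathrm{R3}_1$ are not always-realizable in isolation, the hypothesis of connected fibers over $(\ast)$ together with the configuration produced by the preceding flips is exactly what is needed to carry both out.

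I would start by applying a flip to each of the two fold arcs in $D$ whose images are to be exchanged. Each flip is always-realizable (cf.\ the discussion following Figure~\ref{Figure: HomotopyMoves}) and replaces its fold arc by a zig-zag containing two cusps. The resulting base diagram carries four cusps inside $D$, and by choosing on which side of each arc to perform the flip, one can arrange a pair of cusps to face each other across the region $(\ast)$.

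Next I would perform a cusp merge on this facing pair. The cusp merge requires a joining curve in the preimage connecting the two cusps; as noted in the excerpt, this is automatic when the fibers over the local disk are connected. The hypothesis on $(\ast)$ supplies exactly this connectedness, so the merge is realizable, replacing the two cusps by a single new fold arc crossing through $(\ast)$. The resulting local base diagram then fits precisely into the template of an $\mathrm{R3}_1$ move, with the fold arc produced by the merge playing the role of the strand with its normal arrow pointing into the bounded triangular region.

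Applying the $\mathrm{R3}_1$ move at this stage is the delicate step and the main obstacle, since $\mathrm{R3}_1$ in general requires a compatibility of the vanishing cycles of the three meeting folds (cf.\ \cite{W2}). The key observation is that the fold arc created by the cusp merge is, by construction, dual to the joining curve guaranteed by the connectedness hypothesis; tracking this duality through the two earlier flips shows that the vanishing cycle of the new strand meets each of the other two transversely in a single point, which is precisely the realizability condition for $\mathrm{R3}_1$ in this local configuration. Once this is verified, a single $\mathrm{R3}_1$ transforms the triangle into its mirror, and unflip moves — always-realizable as local inverses of the initial flips, since the preceding moves have not obstructed their local pictures — eliminate the remaining auxiliary cusps and produce the base diagram on the right hand side of Figure~\ref{fig431}, completing the exchange move.
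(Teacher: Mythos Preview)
Your overall plan follows the paper's sequence of moves exactly, and your justification for the cusp merge via connectedness of the fibers over $(\ast)$ is correct. The gap is in your treatment of the $\mathrm{R3}_1$ step.

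You assert that the realizability condition for $\mathrm{R3}_1$ is that ``the vanishing cycle of the new strand meets each of the other two transversely in a single point,'' and then argue this holds by some duality with the joining curve. Neither claim is established. The transverse-single-intersection condition you quote is the criterion for a \emph{fold merge}, not for $\mathrm{R3}_1$; the paper never states (and \cite{W2} does not supply) such a clean vanishing-cycle criterion for $\mathrm{R3}_1$. Moreover, your duality argument is too vague to pin down the vanishing cycle of the merged arc relative to the original two.

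The paper's actual mechanism is different and more robust. Before doing anything, it identifies $D$ with $I\times J$ and views $f^{-1}(D)$ as a $1$--parameter family of $3$--manifolds obtained by attaching two $1$--handles; it then \emph{isotopes any handle slides out of the interval $I$}, so that within $D$ no slides occur. The entire exchange is then realized globally by a $5$--dimensional map $F\colon V\to I\times J\times[0,1]$ modeled on a monkey saddle (the southern hemisphere of the parametrizing $3$--ball in \cite[Ch.~V, \S4]{HW}). Reading off the bifurcations of $F$ yields the sequence flips $\to$ cusp merge $\to$ $\mathrm{R3}_1$ $\to$ unflip, and the $\mathrm{R3}_1$ is realizable precisely because the new $1$--handle born from the cusp merge does not slide over the original two --- a consequence of the no-handle-slide normalization made at the outset. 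The point emphasized in the paper is that one is \emph{constructing} the intermediate indefinite fibrations, not analyzing an arbitrary one, so one may arrange the handle slides to make $\mathrm{R3}_1$ go through. Your argument lacks this preparatory step (or any substitute for it), and without it the $\mathrm{R3}_1$ move is not justified.

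A minor point: the paper uses a single unflip at the end (the two remaining cusps after the merge lie on the same arc), not several.
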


\begin{figure}[htbp]
\centering
\psfrag{II}{exchange}
\psfrag{a}{$(\ast)$}
\includegraphics[width=0.95\linewidth,height=0.1\textheight,
keepaspectratio]{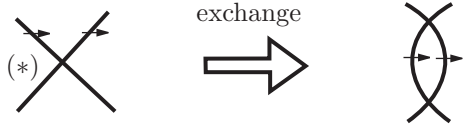}
\caption{Exchange move}
\label{fig431}
\end{figure}

\begin{proof}
Let us identify the disk $D$ with the square
$I \times J$, where $I = J = [-1, 1]$, $I$ corresponds
to the vertical direction downward and $J$
to the horizontal direction from right to left
(see Figure~\ref{fig471}).
We regard $f^{-1}(D) = f^{-1}(I \times J)$
as a $1$--parameter family of $3$--manifolds
$f^{-1}(\{t\} \times J)$, $t \in I$, which
are obtained from $f^{-1}(\{t\} \times [-1, -1+\varepsilon])$,
$0 < \varepsilon << 1$,
by attaching two $1$--handles. By isotoping the handle slides
that may possibly occur while $t \in I$ varies into
intervals outside of $I$, we may assume that
there occurs no handle slide for $t \in I$.
Near $t = 0$, where the crossing of two
fold arc images occurs, the crossing of two
$1$--handles occurs.
Note that the $3$--manifold $f^{-1}(\{t\} \times J)$ obtained by
attaching the two $1$--handles is connected for each $t$
by our assumption.

\begin{figure}[htbp]
\centering
\psfrag{I}{$I$}
\psfrag{J}{$J$}
\psfrag{1}{$1$}
\psfrag{m}{$-1$}
\includegraphics[width=0.95\linewidth,height=0.15\textheight,
keepaspectratio]{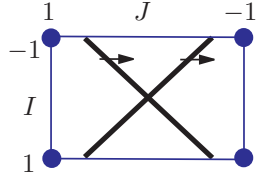}
\caption{Identifying $D$ with $I \times J$.}
\label{fig471}
\end{figure}

Then, we can construct a smooth map
$F \colon V \to I \times J \times [0, 1]$
of a compact $5$--dimensional manifold
with corners $V \cong f^{-1}(I \times J) \times [0, 1]$
with the following properties.
\begin{enumerate}
\item The map $F|_{F^{-1}(I \times J \times
\{0\})} \colon F^{-1}(I \times J \times
\{0\}) \to I \times J \times \{0\}$ coincides with 
\[f|_{f^{-1}(I \times J)} \colon f^{-1}(I \times J)
\to I \times J.\]
\item The singular value set is as depicted in Figure~\ref{fig421}.
\item The map $F$ restricted to $F^{-1}(\partial I \times J \times
[0, 1]) \cong f^{-1}(\partial I \times J) \times [0, 1]$
coincides with the trivial $1$--parameter family of maps 
\[(f|_{f^{-1}(\partial
I \times J)}) \times \mathrm{id}_{[0, 1]} \colon
f^{-1}(\partial I \times J) \times [0, 1] \to
\partial I \times J \times [0, 1].\] 
\end{enumerate}
Note that $F$ can be regarded as a homotopy
of maps $f^{-1}(I \times J) \to I \times J$ \linebreak
parametrized by $[0, 1]$, starting from $f|_{f^{-1}(I \times J)}$
and ending with an indefinite fibration whose
base diagram is as depicted in the right hand side of
Figure~\ref{fig431}.

\begin{figure}[htbp]
\centering
\psfrag{I}{$I$}
\psfrag{J}{$J$}
\psfrag{0}{$[0, 1]$}
\includegraphics[width=0.95\linewidth,height=0.25\textheight,
keepaspectratio]{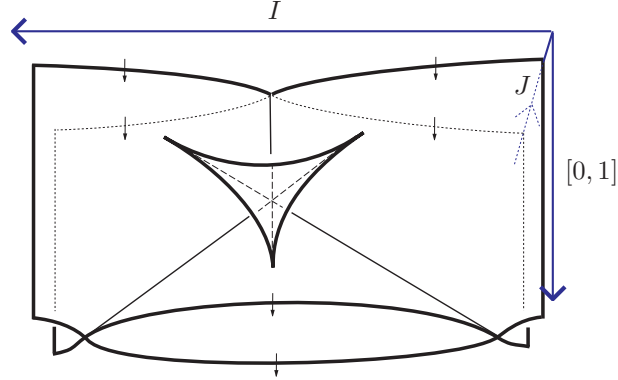}
\caption{The singular value set of $F$ in $I \times J \times [0, 1]$.}
\label{fig421}
\end{figure}

Such a smooth map $F$ can be constructed, for example,
as follows. As indicated in \cite[Chapter~V, \S4]{HW},
to a monkey saddle corresponds a $3$--parameter 
family of functions on a connected $3$--manifold, parametrized by a $3$--ball.
Then the map $F$ corresponds to the southern hemisphere
of the boundary of the parametrizing $3$--ball for
a monkey saddle on an appropriate $3$--manifold.

\begin{figure}[htbp]
\centering
\psfrag{fi}{flips}
\psfrag{c}{cusp}
\psfrag{m}{merge}
\psfrag{III}{$\mathrm{R3}_1$}
\psfrag{f}{unflip}
\psfrag{1}{(a)}
\psfrag{2}{(b)}
\psfrag{3}{(c)}
\psfrag{4}{(d)}
\psfrag{5}{(e)}
\includegraphics[width=\linewidth,height=0.3\textheight,
keepaspectratio]{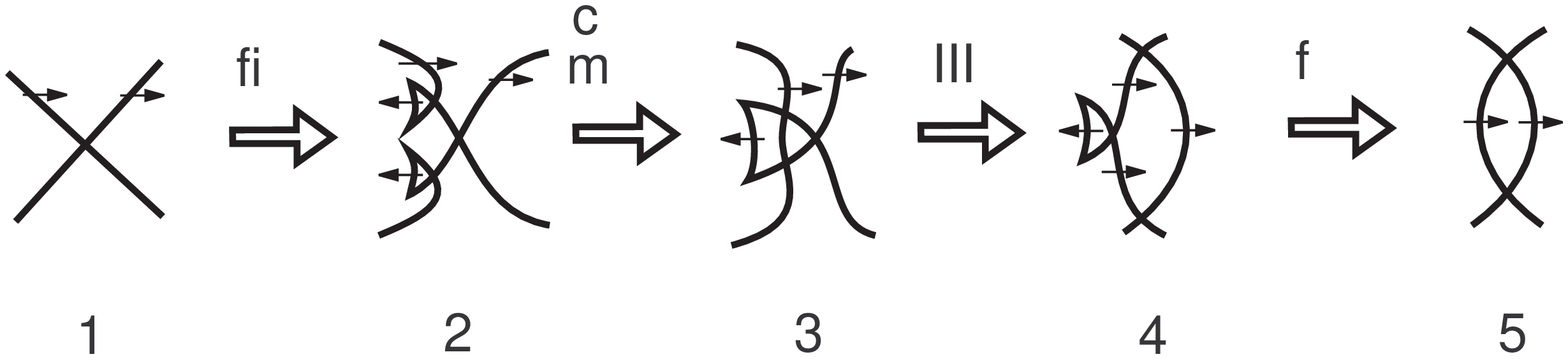}
\caption{Base diagram moves for the exchange move.}
\label{fig441}
\end{figure}

This can also be seen by the sequence of base diagram moves as depicted in Figure~\ref{fig441}. 
The transition from (a) to (b) is realized
by two flips. The transition from (b) to (c)
is realized by a cusp merge, which is realizable
as the relevant fibers are all connected by our assumption.
The transition from (c) to (d) is realized by
$\mathrm{R3}_1$ move, which is not always-realizable.
However, in our case, the $1$--handle that is
attached over the two crossing $1$--handles does
not slide over them, and therefore the $\mathrm{R3}_1$ move
is realizable. Finally, the transition from
(d) to (e) is realized by an unflip.

It might be good to remind that here we are constructing 
a smooth $1$--parameter family of maps 
(from a $4$--manifold to a surface) which starts from 
a given indefinite fibration with a base diagram as 
in Figure~\ref{fig441} (a), and which ends up with a certain (not given!) 
indefinite fibration with a base diagram 
as in Figure~\ref{fig441} (e). That is, our goal is to show that 
we can appropriately choose indefinite fibrations with 
base diagrams as in Figure~\ref{fig441} (b)--(e); we are not 
trying to reconstruct \emph{any} possible transition for 
such a sequence of base diagram moves. In particular, we can choose 
these maps so that the handle-slides 
are arranged as argued in the previous paragraphs.

This completes the proof of the proposition.
\end{proof}

Our next combination move is an immediate corollary of the above proposition:

\begin{prop}[Criss-cross braiding] \label{lem:umbilic} 
Let $D$ be a local disk containing the base diagram on the left hand side of 
Figure~\ref{fig:crisscross}. 
Suppose that the fibers over the points
in the region marked with $(\ast)$ are connected. 
Then, the criss-cross braiding depicted in Figure~\ref{fig:crisscross} 
is realizable. 
It is realized by a sequence of $\mathrm{R2}^1$, two flips, cusp merge, $\mathrm{R3}_1$, 
and unflip moves.
\end{prop}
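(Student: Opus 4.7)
The plan is to derive the criss-cross braiding as an immediate consequence of the exchange move established in Proposition~\ref{lem:umbilic1}. Concretely, I would argue that after an always-realizable $\mathrm{R2}^1$ move, the local base diagram of the criss-cross braiding is turned into one on which we can directly apply the exchange move, and the asserted sequence of atomic moves follows by concatenation.

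First, I would perform an $\mathrm{R2}^1$ move inside a small sub-disk of $D$, placed so that the two newly created fold arcs, one of which has its normal arrow pointing into the bounded bigon, are positioned to interact with the two crossing fold arcs of the original criss-cross configuration. Because $\mathrm{R2}^1$ is always-realizable (as discussed in the paragraph following Figure~\ref{Figure: R2}), this step requires no additional hypothesis. After isotopy, the resulting base diagram matches the left-hand side of Figure~\ref{fig431}, i.e.\ the input of the exchange move, with the region marked $(\ast)$ there corresponding to (a sub-region of) the $(\ast)$ region in Figure~\ref{fig:crisscross}. The connectivity hypothesis on fibers over $(\ast)$ in Figure~\ref{fig:crisscross} thus transfers to the connectivity hypothesis required for Proposition~\ref{lem:umbilic1}.

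Next, I would invoke Proposition~\ref{lem:umbilic1} to replace the resulting configuration by the base diagram on the right-hand side of Figure~\ref{fig431}, obtaining a local picture that agrees with the right-hand side of Figure~\ref{fig:crisscross}. By that proposition, the exchange move is realized by two flips, a cusp merge, an $\mathrm{R3}_1$, and an unflip. Concatenating this sequence with the initial $\mathrm{R2}^1$ move yields the full list of atomic moves claimed in the statement: $\mathrm{R2}^1$, two flips, cusp merge, $\mathrm{R3}_1$, and unflip.

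The main potential obstacle is checking that the intermediate base diagram produced by the $\mathrm{R2}^1$ move genuinely matches the input of the exchange move, with arrows, bounded regions, and handle data aligning correctly so that the handle-slide arrangement in the proof of Proposition~\ref{lem:umbilic1} still applies (in particular, that the $1$--handle attached over the two crossing $1$--handles does not slide over them, which is what made the $\mathrm{R3}_1$ step realizable there). Once this alignment is verified, the rest is purely a bookkeeping of moves, and no further hypothesis beyond fiber-connectivity over $(\ast)$ is needed.
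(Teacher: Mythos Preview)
Your approach is essentially the same as the paper's: apply an $\mathrm{R2}^1$ move and then invoke Proposition~\ref{lem:umbilic1}. One point of imprecision: the left-hand side of Figure~\ref{fig:crisscross} consists of two \emph{parallel} (non-crossing) strands, so your phrase ``the two crossing fold arcs of the original criss-cross configuration'' is misleading, and after the $\mathrm{R2}^1$ move you obtain a bigon with \emph{two} crossings rather than a picture that globally matches the left-hand side of Figure~\ref{fig431}. The paper's proof applies the exchange move of Proposition~\ref{lem:umbilic1} to a small disk around \emph{one} of these two crossings (turning that single crossing into two, for a total of three), and the required connectivity over the $(\ast)$ region there is inherited from your hypothesis. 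Your final paragraph's worry about handle slides is unnecessary: once the hypotheses of Proposition~\ref{lem:umbilic1} are met, its conclusion applies as a black box.
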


\begin{figure}[htbp]
\centering
\psfrag{cc}{criss-cross}
\psfrag{a}{$(\ast)$}
\includegraphics[width=0.95\linewidth,height=0.1\textheight,
keepaspectratio]{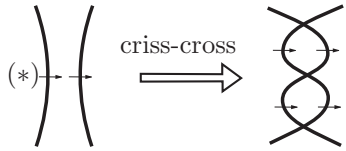}
\caption{\emph{Criss-cross braiding} that can be always-realized.}
\label{fig:crisscross}
\end{figure}

\begin{proof}
We first apply $\mathrm{R2}^1$ move. Then, by our assumption,
we can apply 
Proposition~\ref{lem:umbilic1} to one of the crossings to
get the desired base diagram.
\end{proof}

\smallskip
Taking a closer look at our proofs of above propositions, we can identify some necessary conditions for the Reidemeister type moves we have not identified as always-realizable. We discuss these in the next several remarks.

\begin{rk}\label{rk:homological-constraint}
In the proof of Proposition~\ref{lem:umbilic1}, we
constructed a smooth map 
\[F : V \to I \times J \times [0, 1].\]
For $(s_1, s_2) \in I \times [0, 1]$, set 
$$F_{s_1, s_2}
= p_J \circ F|_{F^{-1}(\{s_1\} \times J \times \{s_2\})} \colon
F^{-1}(\{s_1\} \times J \times \{s_2\}) \to J,$$
where $p_J \colon I \times J \times [0, 1] \to J$ is the
projection to the second factor. This is a family of functions on a $3$--manifold parametrized
by $I \times [0, 1]$. If $(s_1, s_2) \in \partial (I \times [0, 1])$, then $F_{s_1, s_2}$ has exactly two critical points of index $1$.
Observing the monkey saddle point carefully, we see that as $(s_1, s_2) \in \partial (I \times [0, 1])$ varies, we have the handle slides as depicted in Figure~\ref{fig451}.

\begin{figure}[htbp]
\centering
\psfrag{h}{$2$--handle}
\includegraphics[width=\linewidth,height=0.4\textheight,
keepaspectratio]{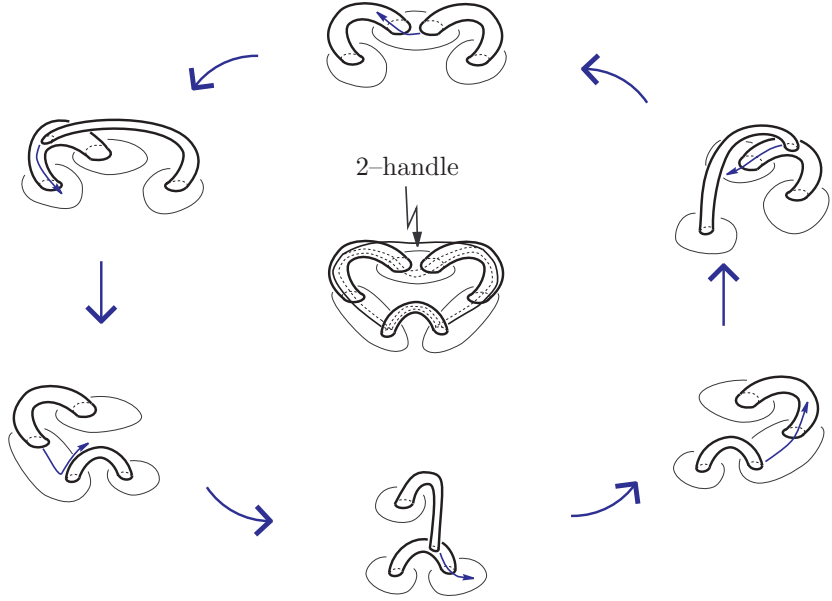}
\caption{Handle slides for $F_{s_1, s_2}$, $(s_1, s_2) \in
\partial (I \times [0, 1])$. The central figure corresponds
to $F_{0, 1/2}$ which has exactly
three critical points of index $1$ with the same value, together
with a critical point of index $2$ over them.
}
\label{fig451}
\end{figure}

Let us examine more carefully the handle slides involving an
exchange move. For this, let us investigate
the homological behavior of the handle slides for 
the base diagrams on both sides of Figure~\ref{fig431}.
For $t_1 = -1 \in I$, let $\alpha$ (or $\beta$) denote the 1st homology
class corresponding to the upper (resp.\ lower) $1$--handle
(see Figure~\ref{fig461}). (Precisely speaking, 
we fix an orientation of each $1$--handle, and then it represents
an element of $H_1(f^{-1}(\{t_1\} \times J), f^{-1}(\{t_1\} \times \{-1\}); \Z)
\cong \Z \oplus \Z$.)
Let us first consider the
base diagram on the left hand side of Figure~\ref{fig431}.
We consider the four parameter values of $I$ indicated in Figure~\ref{fig461}. 
We assume that for $t \in [t_1, t_2]$ (or $t \in [t_3, t_4]$), 
the upper $1$--handle
slides over the lower $1$--handle homologically $p$ times
(resp.\ $q$ times). We further assume that for $t \in [t_2, t_3]$
no handle slides occur. Then, a simple calculation shows that
for the level $t_4 = 1 \in I$, the
upper $1$--handle represents the homology class
$\beta + q(\alpha + p \beta)
= q \alpha + (pq+1) \beta$, while the lower one represents
$\alpha + p \beta$.

\begin{figure}[htbp]
\centering
\psfrag{a}{$\alpha$}
\psfrag{b}{$\beta$}
\psfrag{k}{$k$}
\psfrag{l}{$\ell$}
\psfrag{m}{$m$}
\psfrag{p}{$p$}
\psfrag{q}{$q$}
\psfrag{1}{$t_1$}
\psfrag{2}{$t_2$}
\psfrag{3}{$t_3$}
\psfrag{4}{$t_4$}
\psfrag{5}{$t_5$}
\psfrag{6}{$t_6$}
\psfrag{7}{$t_7$}
\psfrag{8}{$t_8$}
\psfrag{9}{$t_9$}
\psfrag{10}{$t_{10}$}
\includegraphics[width=\linewidth,height=0.2\textheight,
keepaspectratio]{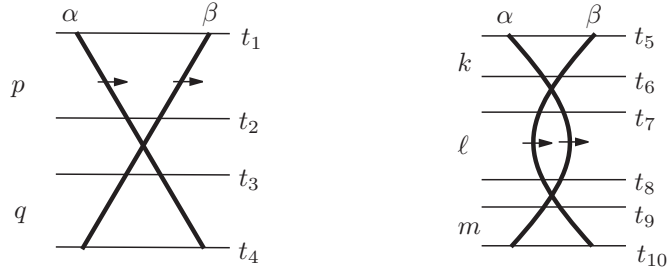}
\caption{As $t \in I$ varies, the upper $1$--handle
slides over the lower one, and the corresponding homological
``winding numbers'' are denoted by $p, q$ on the left hand side,
while they are denoted by $k, \ell, m$ on the right hand side.}
\label{fig461}
\end{figure}

Let us play the same game for the base diagram on the right hand side
of Figure~\ref{fig431}.
Then, using the notations indicated on the
right hand side of Figure~\ref{fig461},
we see that for the level $t_{10} = 1 \in I$,
the upper $1$-handle represents 
$\alpha + k \beta + m(\beta + \ell(\alpha + k \beta))
= (\ell m+1) \alpha + (k + m + k \ell m) \beta$,
while the lower one represents
$\beta + \ell (\alpha + k \beta) = \ell \alpha
+ (k \ell + 1) \beta$.

Therefore, if the transition between the both sides
of Figure~\ref{fig461} is realized by homotopy, then we must have
\begin{eqnarray*}
q \alpha + (pq+1) \beta & = &  
\varepsilon_1((\ell m+1) \alpha + (k + m + k \ell m) \beta), \\
\alpha + p \beta & = & \varepsilon_2(\ell \alpha + (k \ell + 1) \beta)
\end{eqnarray*}
for some $\varepsilon_1, \varepsilon_2 \in \{-1, +1\}$.
By a straightforward calculation, we see that
these hold if and only if we have
$$\ell = \varepsilon_2, \, \varepsilon_1 \varepsilon_2 = -1, \,
p = k + \varepsilon_2, \, q = -m + \varepsilon_1.$$
In particular, we see that for $t \in [t_7, t_8]$
in the figure on the right hand side, the upper $1$--handle
must slide over the lower one homologically $\pm 1$ time.

In the case of the proof of Proposition~\ref{lem:umbilic1}, we have $p=q=0$,
so $k=m=\varepsilon_1$ and $\ell = - \varepsilon_1$.
This conforms to the proof of the lemma and Figure~\ref{fig451}.
\end{rk}

\begin{rk}\label{rk:R2_1}
The simple homological calculation in Remark~\ref{rk:homological-constraint}
also implies the following observation. Let us assume that the $\mathrm{R2}_1$ 
move is realized. Then, the base diagram on the right hand side
of Figure~\ref{fig461} must be homotopic to two parallel strands.
Suppose that for this latter base diagram, the
upper $1$--handle slides over the lower one homologically $r$ times.
Then, by an argument similar to the above, we must have
\begin{eqnarray*}
\alpha + r \beta & = &  
\varepsilon_1((\ell m+1) \alpha + (k + m + k \ell m) \beta), \\
\beta & = & \varepsilon_2(\ell \alpha + (k \ell + 1) \beta)
\end{eqnarray*}
for some $\varepsilon_1, \varepsilon_2 \in \{-1, +1\}$.
By a straightforward calculation, we see that these equalities hold if and only if
$\ell = 0, \, r = k+m, \, \varepsilon_1 = \varepsilon_2 = 1.$
This means that the $\mathrm{R2}_1$ move is not always-realizable:
for the realization, it is necessary that for $t \in [t_7, t_8]$, the
upper $1$--handle should not slide over the lower one at least homologically.
\end{rk}

\begin{rk}\label{rk:williams}
Let us assume that the base diagram on the left hand side
of Figure~\ref{fig461} is transformed to a pair
of vertical strands by homotopy. Then, we see easily that exactly the same
argument as above leads to a contradiction. 
\end{rk}

\vspace{0.1in}
\section{Simplifying the topology of indefinite fibrations} \label{Sec:Embedded}
%=================================================
%=================================================

In this section, we will give explicit algorithms for homotoping an indefinite fibration 
to a directed indefinite fibration, and in turn, to a directed indefinite fibration with embedded round image. Our algorithms will use base diagram moves which are always-realizable. These will consist of flip, unsink, push, Reidemeister type multi-germ moves gathered in Figure~\ref{fig32-3}, and the additional \emph{criss-cross braiding} move given in Proposition~\ref{lem:umbilic}. We will also give similar algorithms for homotoping a directed indefinite fibration with embedded round image to one which is also fiber-connected and has connected round locus. 

\begin{figure}[htbp!]
\psfrag{r2u0}{$\mathrm{R2}^0$}
\psfrag{r2u1}{$\mathrm{R2}^1$}
\psfrag{r2l2}{$\mathrm{R2}_2$}
\psfrag{r3l3}{$\mathrm{R3}_3$}
\psfrag{r3l2}{$\mathrm{R3}_2$}
\centering
\includegraphics[width=\linewidth,height=0.39\textheight,
keepaspectratio]{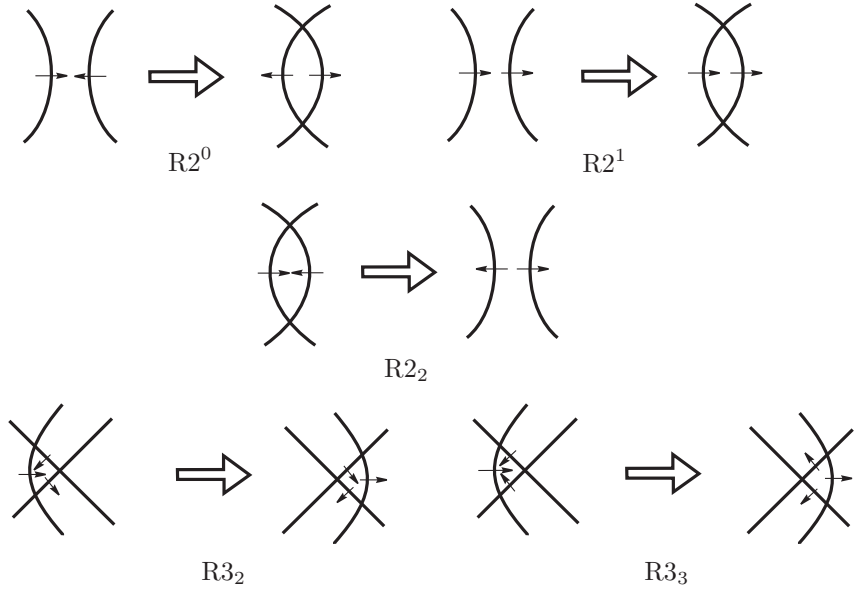}
\caption{Several multi-germ moves that are always-realizable.}
\label{fig32-3}
\end{figure}

Note that for any one of the multi-germ moves in Figure~\ref{fig32-3}, if the fibers over the given base diagram  are all connected, then so are the fibers over the base diagram we get after applying the move.  The only concern here can be for fibers over bounded regions formed after the move. However, in each case,  one can reach to these regions from a region on the periphery by ``going against the arrows'', i.e.\ a fiber here is  obtained by adding only $1$--handles to a connected fiber on the peripheral region.

\smallskip
\subsection{Immersed directed round image} \
%==================================================================

We will now prove:

\begin{thm} \label{mainthm1}
There exists a finite algorithm consisting of sequences of flip, unsink, push and Reidemeister type moves $\mathrm{R2}^0$, $\mathrm{R2}^1$, 
$\mathrm{R2}_2$, $\mathrm{R3}_2$, $\mathrm{R3}_3$, which homotopes any given indefinite fibration $f\colon X \to D^2$ to an  outward-directed indefinite fibration $g\colon X \to D^2$. When $f$ is fiber-connected, so is the resulting outward-directed indefinite fibration $g$.
\end{thm}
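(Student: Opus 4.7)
The aim is to homotope $f$ so that its round image admits a reference regular value $z_0$ whose outgoing radial foliation intersects every fold arc transversally in the arrow direction. I would set up this reference structure first: pick $z_0\in\Int{D^2}$ off the singular image, let $D_0$ be a small disk neighborhood of $z_0$ disjoint from $f(Z_f\cup C_f)$, and fix an outward foliation $\mathcal{F}$ of $D^2\setminus\{z_0\}$ by arcs from $\partial D_0$ to $\partial D^2$. After a small isotopy of the base diagram (realizable by a homotopy of $f$), the round image is generic with respect to $\mathcal{F}$: transverse except at a finite set of quadratic tangencies, disjoint from cusps and Lefschetz critical values. Declare an intersection of a leaf of $\mathcal{F}$ with a fold arc to be \emph{bad} if the arrow points inward along that leaf, and let $B(f)$ denote the total number of bad intersections summed over a representative set of leaves. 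Then $f$ is outward-directed with respect to $(z_0,\mathcal{F})$ exactly when $B(f)=0$, so it suffices to give a procedure built from the listed always-realizable moves that strictly decreases $B$.

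The first step is to clean up cusps and Lefschetz critical points. Apply unsink at every cusp of $f$, turning each into a Lefschetz critical point; the count $B$ is unaffected since the underlying fold arcs and their arrows are unchanged. Next, apply push moves to ferry every Lefschetz critical value across any fold arc whose arrow points toward it, until all Lefschetz critical values lie in a thin annular collar of $\partial D^2$; this is always possible because push is always-realizable and Lefschetz values do not contribute to $B$. We are now reduced to an indefinite fold map whose round image is a collection of smooth immersed fold arcs, together with Lefschetz values tucked into the collar.

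The reversal subroutine is a global variant of the flip-and-slip combination of Figure~\ref{fig151}. Given a maximal subarc $\alpha_0$ of a fold arc along which the arrow is bad with respect to $\mathcal{F}$, apply a flip in the interior of $\alpha_0$, introducing two cusps that bracket a small good middle segment flanked by two short bad segments. Then use the Reidemeister moves $\mathrm{R2}^0$, $\mathrm{R2}^1$, $\mathrm{R2}_2$, $\mathrm{R3}_2$, $\mathrm{R3}_3$ to slip the two cusps along $\alpha_0$ past any intervening fold arc crossings, enlarging the good middle segment until it occupies all of $\alpha_0$. Finally unsink the two cusps to produce two Lefschetz values, and push each outward to the collar of $\partial D^2$. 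The resulting base diagram agrees with the original outside a neighborhood of $\alpha_0$, and on $\alpha_0$ the arrow now points outward, so $B$ has strictly decreased.

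The main obstacle will be justifying that the slipping step can always be carried out using only the Reidemeister moves in our list, and that no previously-good crossings are spoiled. This amounts to a finite case analysis of the local configurations a traveling cusp can encounter and is the reason for the specific selection of $\mathrm{R2}$ and $\mathrm{R3}$ moves stated in the theorem; the always-realizability of each of these moves, already established in the previous section, is what makes the analysis purely combinatorial. Granting this, $B$ is a non-negative integer monovariant strictly decreased by each subroutine, so the algorithm terminates in finitely many steps with an outward-directed $g$. Each move used preserves fiber-connectedness: for the Reidemeister moves this is the observation recorded immediately after Figure~\ref{fig32-3}; for flip, unsink, and push, the relevant base modifications happen inside a disk whose neighboring fibers differ from the local fibers by handle attachments that do not disconnect, so connectedness propagates. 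Hence if $f$ is fiber-connected, so is $g$, and the proof is complete.
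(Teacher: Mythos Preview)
Your reversal subroutine does not work as written, and this is the heart of the proof. After a flip on $\alpha_0$ you have two cusps bounding a short segment with reversed arrow, but the cusps are genuine singularities of the map: they sit at fixed points of the round locus where the fold type changes, and none of the moves $\mathrm{R2}^0$, $\mathrm{R2}^1$, $\mathrm{R2}_2$, $\mathrm{R3}_2$, $\mathrm{R3}_3$ moves a cusp along its own fold arc---those moves only rearrange crossings between \emph{different} fold arcs. If you instead try to drag the cusps by an isotopy of $f$, the moment a cusp reaches a crossing of $\alpha_0$ with another fold arc you need a cusp--fold crossing $C$--move, which is conspicuously absent from the list in the statement. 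The flip-and-slip trick you are alluding to works for an inward-directed \emph{circle} precisely because the flipped lobe can be pushed all the way around via $\mathrm{R2}_2$; there is no analogous maneuver for a bad subarc sitting inside an arbitrary immersed configuration. Your own acknowledgment that ``the main obstacle will be justifying that the slipping step can always be carried out'' is exactly right, and the obstacle is real. A smaller issue: push moves a Lefschetz value only across a fold arc whose arrow points \emph{toward} it (i.e.\ toward higher genus), so you cannot in general ferry all Lefschetz values out to a collar of $\partial D^2$ before the fibration is directed.

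The paper's proof takes a rather different route. After unsinking all cusps, it places the round image in an annulus $A\cong[0,1]\times S^1\subset D^2$, cuts $A$ into ``zones'' by the critical values of the angular projection $\pi\circ f|_{Z_f}$, and classifies the vertical fold arcs in each zone as positive or negative according to the sign of their radial co-orientation. The algorithm then eliminates positive arcs one at a time, from the outermost inward, by pushing each one radially across the negative arcs on its right using $\mathrm{R2}^0$, $\mathrm{R2}^1$, $\mathrm{R2}_2$, $\mathrm{R3}_2$, $\mathrm{R3}_3$ and push; the displaced arc re-enters the annulus as negative arcs in the other zones. A flip followed by unsinks is used only in one specific local configuration, namely when the endpoint of a positive arc is the image of a critical point of $\pi\circ f|_{Z_f}$ with the wrong co-orientation on the adjacent arc. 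Fiber-connectedness is then immediate globally, not move-by-move: all modifications avoid $\partial D^2$, and since $g$ is outward-directed every fiber is obtained from the connected boundary fiber by $1$--handle attachments.
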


Note that all the moves mentioned in the above theorem are always-realizable.

\begin{proof}
Applying unsink moves, we may assume that $f$ has no cusps. Take an embedded annulus $A = [0, 1] \times S^1 $ in $D^2$ 
which contains $f(Z_f)$ in its interior, and let $\pi \colon A \to S^1$ be the natural projection to the second factor. 
We may assume that $\pi \circ f|_{Z_f}$ is a Morse function and that the $\pi$--values of the crossings of $f(Z_f)$ are 
different than the critical values of $\pi \circ f|_{Z_f}$ in $S^1$. Let $t_1, t_2, \ldots, t_s \in S^1$ be the critical values of 
$\pi \circ f|_{Z_f}$, located in this order with respect to a fixed orientation of $S^1$, and consider the ``zones'' 
$A_i = [0, 1] \times  [t_i, t_{i+1}]  \subset A$, $i = 1, 2, \ldots, s$, where $[t_i, t_{i+1}] \subset S^1$ is the directed 
arc from $t_i$ to $t_{i+1}$, and $t_{s+1} = t_1$. Then, $f(Z_f) \cap \Int{A_i}$ consists of a finite number of 
embedded and co-oriented vertical open arcs, where ``vertical'' means that $\pi$ restricted to each open arc is a submersion. 
We say that such an arc is \emph{positive} (resp.\ \emph{negative}) if its co-orientation is consistent with the positive 
(resp.\ negative) direction of the $[0, 1]$--factor of $A_i$.  If necessary, applying local isotopies as in Figure~\ref{fig101} 
in advance,  we may assume that positive arcs do not intersect each other, in the expense of subdividing $S^1$ further.

\begin{figure}[htbp!]
\centering
\includegraphics[width=0.5\linewidth,height=0.15\textheight]{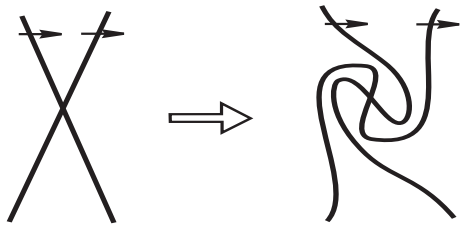}
\caption{Turning an intersection of positive arcs into that of negative arcs.}
\label{fig101}
\end{figure}

Next, we will apply base diagram moves to $f$ in such a way that, for the resulting indefinite fibration, all the arcs in each zone $A_i$ are negative. We will continue to denote the resulting indefinite fibration by the same letter $f$, for simplicity. The following conditions will be preserved after each modification for each $i=1, 2, \ldots, s$:
\begin{itemize}
\item[(1)] Each arc in $f(Z_f) \cap A_i$ is vertical,
\item[(2)] Positive arcs in $f(Z_f) \cap A_i$ do not intersect each other.
\end{itemize}

\begin{figure}[tb]
\psfrag{ai}{$A_i$}
\psfrag{a1}{$\alpha_1$}
\psfrag{a2}{$\alpha_2$}
\psfrag{al}{$\alpha_\ell$}
\psfrag{-1}{$0$}
\psfrag{1}{$1$}
\centering
\includegraphics[width=0.8\linewidth,height=0.35\textheight]{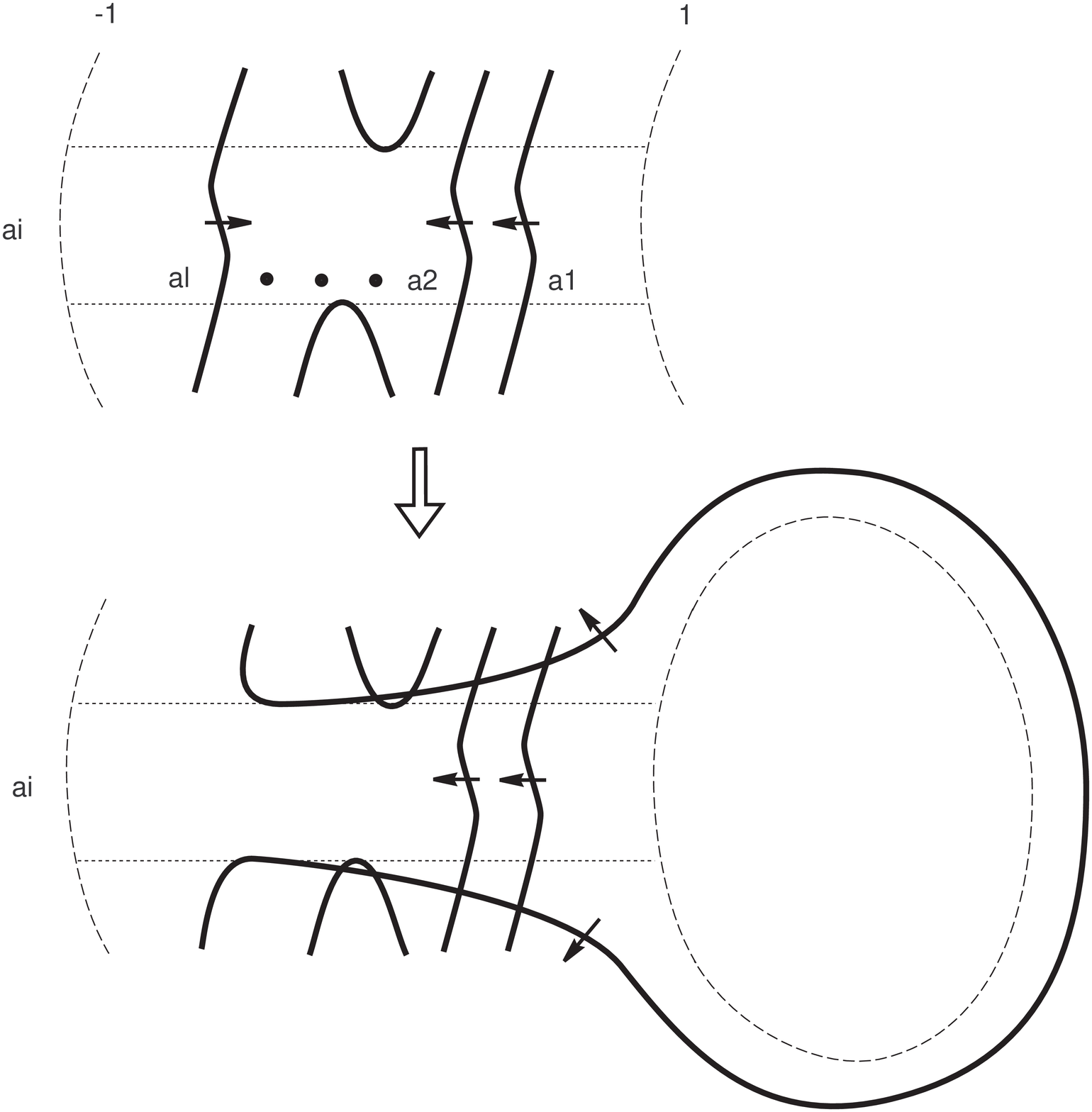}
\caption{Moves that eliminate a positive arc: Part 1.}
\label{fig51}
\end{figure}

Fix an index $i$. If $A_i$ does not contain any crossing of $f(Z_f)$, then we can name the arcs of $f(Z_f) \cap \Int{A_i}$ as $\alpha_1, \alpha_2, \ldots, \alpha_r$ such that $\alpha_1$ is situated
in the rightmost position, and then $\alpha_2$ is next to it on its left hand side, and so on.
Suppose that $\alpha_1, \alpha_2, \ldots, \alpha_{\ell-1}$ are negative and $\alpha_\ell$ is positive for some $1 \leq \ell \leq r$. When the end points of $\alpha_\ell$ are not
$f$--images of critical points of $\pi \circ f|_{Z_f}$, we apply the always-realizable multi-germ moves in Figure~\ref{fig32-3} together with pushes so that the move as depicted in Figure~\ref{fig51} is realized. As a result, the total number of positive arcs (in \emph{any} $A_i$) decreases. During this procedure, the arc in question goes out of the annulus $A$ temporarily, but in the end it is embedded in $A$ \emph{with negative co-orientation}. Note that the resulting base diagram still satisfies the above conditions (1) and (2); the number of arcs in $A_j$, $j \neq i$, increases, but they are all negative.

When an end point of $\alpha_\ell$ is the $f$--image of a critical point of $\pi \circ f|_{Z_f}$, we apply the sequences of 
moves as depicted in Figure~\ref{fig61} or in Figure~\ref{fig71}, which consist of the always-realizable multi-germ 
moves in Figure~\ref{fig32-3} and pushes, as well as a flip and a pair of unsinks in the latter case.

\begin{figure}[tb]
\centering
\includegraphics[width=0.85\linewidth,height=0.13\textheight,
keepaspectratio]{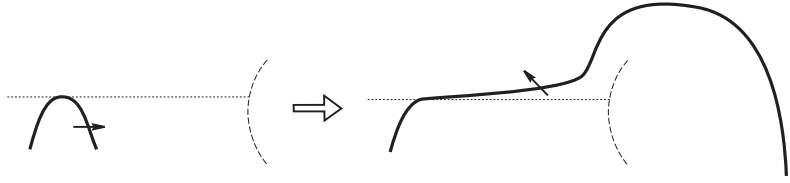}
\caption{Moves that eliminate a positive arc: Part 2.}
\label{fig61}
\end{figure}

\begin{figure}[htbp!]
\psfrag{a}{$\alpha_{\ell-1}$}
\psfrag{b}{$\alpha_{\ell}$}
\psfrag{f}{flip}
\psfrag{u}{unsinks and multi-germ moves}
\centering
\includegraphics[width=0.85\linewidth,height=0.3\textheight,
keepaspectratio]{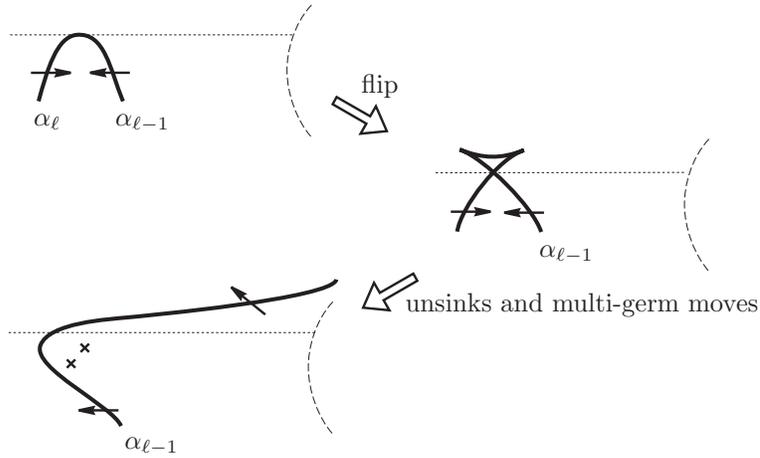}
\caption{Moves that eliminate a positive arc: Part 3.}
\label{fig71}
\end{figure}

Now consider the case when $A_i$ contains crossings of $f(Z_f)$. Let $\alpha_1, \alpha_2, \ldots, \alpha_r$ be the (open) vertical arcs of $f(Z_f) \cap \Int{A_i}$. If they are all negative, then we have nothing to do. Suppose some are positive.  Say $\alpha_\ell$ is the rightmost positive vertical arc; i.e.\ there are no positive vertical arc components in the right hand side region of $\Int{A_i} \setminus \alpha_\ell$.  (Note that if there are positive vertical arcs, then such $\alpha_\ell$ exists, since we made sure that positive arcs do not intersect.)  On $\alpha_\ell$, there may be intersections with negative arcs. If there is no crossing in the right hand side region, then we can apply moves similar to those used above to decrease the number of positive arcs. Otherwise, all the crossings in the right hand side region of 
$\Int{A_i }\setminus \alpha_\ell$ involve only negative arcs. When we move $\alpha_\ell$ to the right using moves as described above, it may encounter such a crossing of negative arcs. In that case, we get a triangular region such that one edge is on $\alpha_\ell$ and the other two are on negative arcs. We may assume that the edges are line segments, which are never horizontal.
If the ``height'' of the vertex of this triangle that is not on $\alpha_\ell$ is between the heights of the other two vertices, then we have a situation as described in the left hand side picture of Figure~\ref{fig111}.
In this case, we can apply the move $\mathrm{R3}_3$ to decrease
the number of crossings in the right hand side region. If that vertex is lower (or higher) than the other two vertices, then we get  the left hand side picture of Figure~\ref{fig121}, and we can apply the move $\mathrm{R3}_2$.

Then, by the same argument as above, we can finally eliminate the positive arc $\alpha_\ell$.
Note that after the moves, both  conditions (1) and (2) are maintained.

\begin{figure}[tb]
\psfrag{al}{$\alpha_{\ell}$}
\psfrag{3}{$\mathrm{R}3_3$}
\centering
\includegraphics[width=0.45\linewidth,height=0.15\textheight]{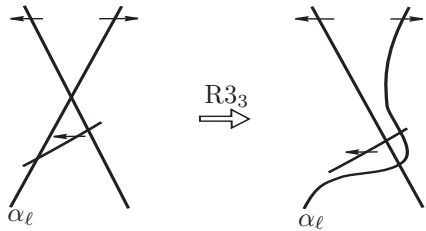}
\caption{Moves that eliminate a positive arc in $A_i$: Part 4.}
\label{fig111}
\end{figure}

\begin{figure}[tb]
\psfrag{al}{$\alpha_{\ell}$}
\psfrag{3}{$\mathrm{R}3_2$}
\centering
\includegraphics[width=0.45\linewidth,height=0.15\textheight]{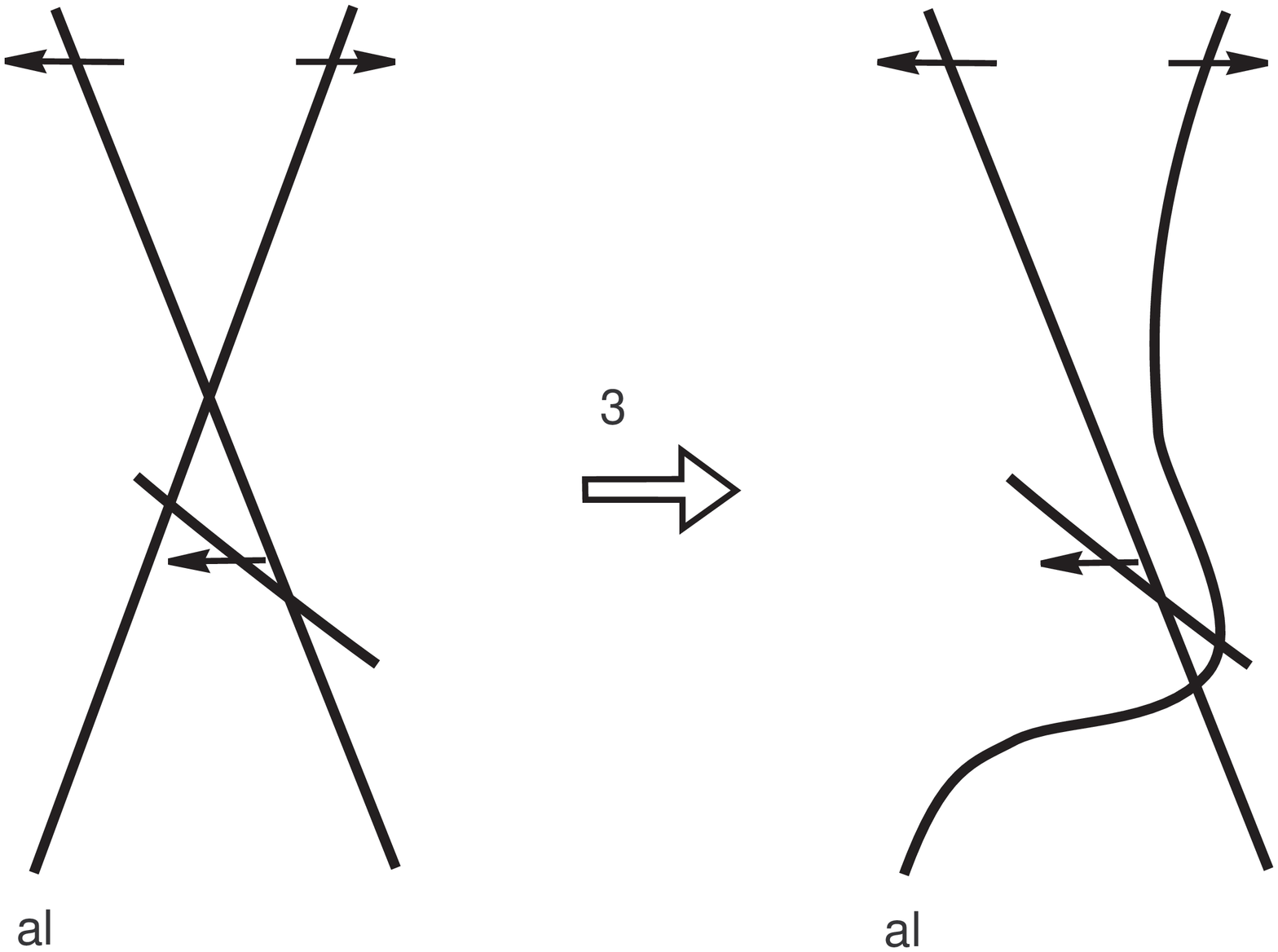}
\caption{Moves that eliminate a positive arc in $A_i$: Part 5.}
\label{fig121}
\end{figure}

Applying these procedures to the positive arcs of $f(Z_f) \cap \Int{A_i}$ from right to left,
we can eliminate all the positive arcs in $\Int{A_i}$, for each $i= 1, 2, \ldots, s$.  
Finally, we get an indefinite fibration $g$ where all arcs in $g(Z_g) \cap \Int{A_i}$ are negative for all $i = 1, 2, \ldots, s$, 
which means that the resulting indefinite fibration $g$ is outward-directed.

Note that our whole algorithm was performed away from the boundary of the base disk $D^2$, 
where the resulting round image is directed outwards. So if a regular fiber $F$ of $f$ over $\partial D^2$ 
was connected, then \emph{all} fibers of $g$ are connected: they are derived by fiberwise $1$--handle 
attachments from $F$. In particular, $g$ is fiber-connected \mbox{if $f$ is.}
\end{proof}

\smallskip

\subsection{Embedded directed round image}\label{EmbeddedSection} \
%======================================================================================

In this subsection, we assume that the closed orientable $4$--manifold
$X$ is connected, and we prove:

\begin{thm} \label{mainthm2}
There exists a finite algorithm consisting of sequences of flip, unsink, push, criss-cross braiding and Reidemeister type moves $\mathrm{R2}^0$,  $\mathrm{R2}^1$, $\mathrm{R2}_2$, $\mathrm{R3}_2$, $\mathrm{R3}_3$, which homotopes any given directed indefinite fibration $f\colon X \to S^2$ to a fiber-connected and directed indefinite fibration \mbox{$g\colon X \to S^2$} with embedded round image. 
\end{thm}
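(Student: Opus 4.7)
The plan is to simplify the directed indefinite fibration $f$ produced by Theorem~\ref{mainthm1} in two stages: first achieving fiber-connectedness, then eliminating the double points of the round image, all while maintaining directedness. By Theorem~\ref{mainthm1}, I may assume $f$ is outward-directed, with $f(Z_f)$ contained in an annulus $A = [0,1] \times S^1$ inside a disk $D \subset S^2$, organized into zones in which all fold arcs are vertical and carry co-orientations pointing uniformly in the same (negative) direction. In particular, at each crossing of $f(Z_f)$ within a zone, the two arrows lie on the same side.

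For the fiber-connectedness stage, the fibers over the outer disk $S^2 \setminus D$ are all diffeomorphic to a fixed surface $F_0$, and since $X$ is connected and outward-directedness means that fibers over points deeper into $D$ are obtained from $F_0$ by fiberwise $2$-handle attachments, the problem reduces largely to ensuring that $F_0$ and the innermost fibers are connected. If $F_0$ is disconnected, I would use a flip in the outer region to introduce a new fold arc, and then use pushes together with always-realizable Reidemeister moves to position this fold arc so that its associated handle attachment merges two components of $F_0$. An induction on the number of components of $F_0$, and analogously on the number of components of innermost fibers over $D$, then yields full fiber-connectedness; the observation made just before the statement of Theorem~\ref{mainthm1} ensures that none of the multi-germ moves used to rearrange the diagram destroy this property once it is achieved.

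For the embedding stage, I would eliminate double points inductively. When a crossing is part of a bigon bounded by arcs whose arrow pattern matches an always-realizable Reidemeister II cancellation, I would apply the appropriate move $\mathrm{R2}^0$, $\mathrm{R2}^1$, or $\mathrm{R2}_2$ to remove it, using $\mathrm{R3}_2$ and $\mathrm{R3}_3$ to rearrange nearby arcs and pushes to clear Lefschetz critical values from the region of the move. When the arrow pattern on the bigon prevents such a direct cancellation — which is the characteristic obstruction in the uniformly co-oriented zone setup — I would invoke the criss-cross braiding of Proposition~\ref{lem:umbilic} to exchange the crossing for a new configuration that does admit a Reidemeister II cancellation. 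The fiber-connectedness achieved in the first stage ensures the connectedness hypothesis of the criss-cross braiding over its $(\ast)$ region is satisfied throughout.

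The main obstacle I anticipate is termination: individual criss-cross braidings and Reidemeister III rearrangements may locally preserve or even briefly raise the number of crossings before pair cancellations occur, and of course care is needed to ensure that embedding moves do not destroy fiber-connectedness. To address the first issue, I would equip base diagrams with a well-ordered complexity, for instance a lexicographic pair consisting of the number of crossings and the number of zones containing a crossing, and verify case by case that each of the move sequences above strictly decreases this complexity. For the second, since every move used acts locally and preserves the topology of the fiber over a chosen reference point in $S^2 \setminus D$, the outward-directed argument of the first stage shows that fiber-connectedness is preserved throughout the embedding stage.
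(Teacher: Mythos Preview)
Your proposal has two genuine gaps.

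First, your use of criss-cross braiding is backwards. Look at Figure~\ref{fig:crisscross}: the move takes a pair of \emph{parallel} strands and produces a configuration with \emph{three} crossings. It is not a tool for converting a bad bigon into a cancellable one; it is a tool for inserting a transposition into the braid permutation. The paper's actual strategy is global, not local: it regards the outward-directed round image as the closure of a virtual braid on $m$ strands, uses criss-cross braidings to \emph{add} enough crossings so that the braid becomes pure (hence $Z_f$ acquires exactly $m$ components), and then unlinks the strands one at a time by swinging arcs over the north and south poles of $S^2$ using $\mathrm{R2}^0$, $\mathrm{R2}_2$, $\mathrm{R3}_3$, flips, and unsinks. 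Termination is controlled not by the number of crossings (which goes up during the pure-braid step) but by the number of ``boxes'' between extrema of the top strand, which drops by two each pass, and then by the number of remaining strands. Your proposed complexity (crossings, zones-with-crossings) would increase under criss-cross braiding, so your termination argument cannot be completed as stated.

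Second, your fiber-connectedness stage is both unnecessary and mis-implemented. A flip introduces two cusps on an existing fold arc; it does not create a fold whose $1$--handle merges components of $F_0$. More to the point, the paper shows directly that the fibers over the southernmost region are already connected whenever $X$ is connected, by analyzing the monodromy of the $Y$--bundle $f^{-1}(A)\to S^1$. Fiber-connectedness of the final map $g$ then falls out at the end, once the image is embedded and outward-directed, because every fiber is built from the connected outer fiber by $1$--handle and Lefschetz handle attachments. No separate stage is needed.
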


\begin{proof}
Below, as we modify the map $f$ through homotopies, we will keep denoting the resulting map with the same letter. 

Since $f$ has immersed directed image, we can view the round image $f(Z_f)$ to be braided around, say the north pole, 
directed towards it. That is, we regard $f(Z_f)$ as the closure of a virtual braid on $m$ strands. 
We will call it the \emph{base braid} for $f$. For convenience, we will think of  $f(Z_f)$ as the union of a base braid, 
given by a virtual braid diagram as in Figure~\ref{fig40}, and a trivial braid, where the latter is juxtapositioned to 
the former to recapture the round image $f(Z_f)$ as the braid closure. 

\begin{figure}[htbp]
\centering
\includegraphics[width=0.6\linewidth,height=0.12\textheight,
keepaspectratio]{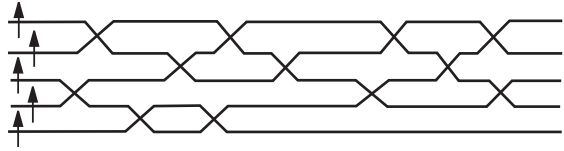}
\caption{An example of a base braid diagram on $5$ strands. The round image is its closure, 
which is obtained by end summing the base braid with a trivial braid on $5$ strands.}
\label{fig40}
\end{figure}

First, we will show that the fibers over the southernmost region of $S^2 \setminus f(Z_f)$
are connected. Let $A\cong S^1 \times [-1, 1]$ be an annular neighborhood of $f(Z_f)$, where the interval $[-1, 1]$ points north. 
So $S^2 \setminus A = N_{\text{NP}} \sqcup N_{\text{SP}}$, where $N_{\text{NP}}$ and $N_{\text{SP}}$  
denote open $2$--disk neighborhoods of the north and south poles, respectively. 
The images of all the Lefschetz critical points are contained in $N_{\text{SP}}$. We can assume that the map 
$\pi_{S^1} \circ f|_{Z_f} \colon Z_f \to S^1$ is a submersion, where, under the identification 
$A \cong S^1 \times [-1, 1]$, \mbox{$\pi_{S^1}\colon S^1 \times [-1, 1] \to S^1$} is the projection to 
the first factor. Then, $X$ is decomposed into three compact $4$--manifolds 
{$f^{-1}(\overline{N}_{\text{NP}})$, $f^{-1}(\overline{N}_{\text{SP}})$} 
and $f^{-1}(A)$. Note that 
{$f^{-1}(\overline{N}_{\text{NP}})$} is a trivial surface bundle over 
{$\overline{N}_{\text{NP}}$},
while $f^{-1}(A)$ is a fiber bundle over $S^1$ with fiber $Y = f^{-1}(\text{pt}) \times [-1, 1])$, 
where $Y$ is a $3$--manifold obtained from 
$f^{-1}(\text{pt} \times [1-\varepsilon, 1])$, $0 < \varepsilon <<1$, by attaching $m$ $1$--handles.
Suppose the fibers are disconnected over $N_{\text{SP}}$. 
The $3$--manifold {$f^{-1}(\partial \overline{N}_{\text{SP}})$}
is a fiber bundle over $S^1$ with fiber surface, say $S$,
with monodromy generated by Dehn twists.
Therefore, the monodromy diffeomorphism preserves each 
connected component of $S$.
Then, for the $Y$--bundle over $S^1$, $f^{-1}(A)$, the monodromy also preserves 
each connected component of $Y$. Therefore, by attaching 
{$f^{-1}(\overline{N}_{\text{NP}})$ and $f^{-1}(\overline{N}_{\text{SP}})$} 
to $f^{-1}(A)$, we get
a disconnected $4$--manifold, which is a contradiction. 
Thus, the fibers over $N_{\text{SP}}$ are necessarily connected.

We can now give our algorithm to prove the theorem. If $m=0$ or $1$, then there is nothing to do, so we assume there are 
$m \geq 2$ strands. 

\smallskip
\noindent \textit{\underline{Step 1:} }
As we have shown above, the fibers over the points in the 
southernmost region of $S^2
\setminus f(Z_f)$ are connected.
Using the $\mathrm{R2}^1$ moves,
we can locally pull down a pair 
of parallel strands towards the south pole, so that we get
a pair of parallel strands such that the lower one is adjacent to
the southernmost region (see the middle diagram of
Figure~\ref{fig481}). Then, by a
criss-cross braiding move of Proposition~\ref{lem:umbilic}, we can locally replace the pair
of parallel strands with a pair of strands that have three mutual crossings 
(see the rightmost diagram of Figure~\ref{fig481}), while modifying the 
round locus above it. This modification 
acts as a transposition on the $m$ points the virtual braid is moving around. 
Since the symmetric group of $m$ points is 
generated by transpositions, 
by adding enough crossings, the base braid for the new round image $f(Z_f)$ 
becomes a \emph{pure} virtual braid. The new round locus $Z_f$ has exactly 
$m$ connected components.

\begin{figure}[htbp!]
\centering
\includegraphics[width=\linewidth,height=0.35\textheight,
keepaspectratio]{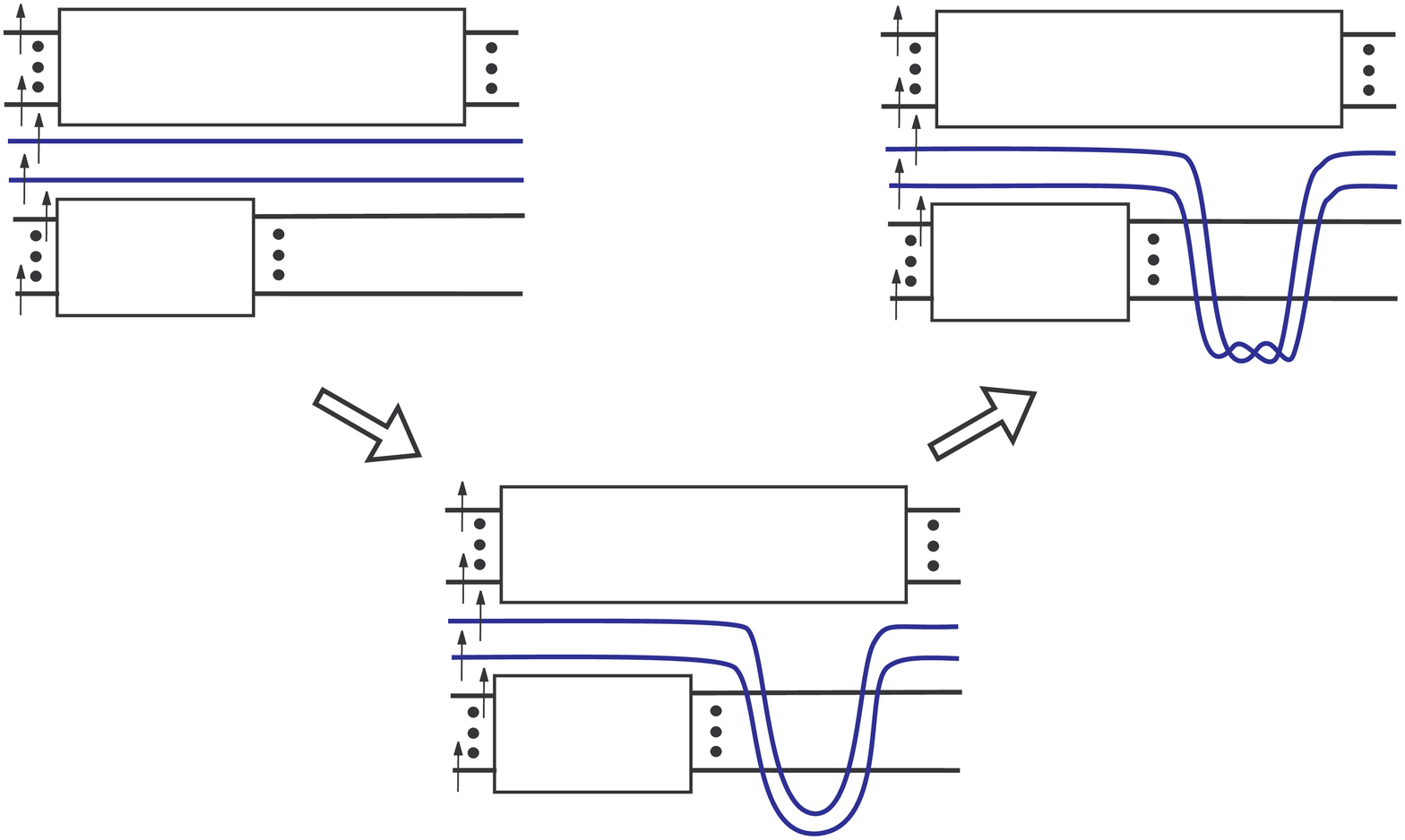}
\caption{$\mathrm{R2}^1$ moves followed by criss-cross braiding.
In the small boxes, we may have any braid (on the corresponding number of strands).}
\label{fig481}
\end{figure}

Using always-realizable base diagram moves, we will turn this pure virtual base braid on $m$ strands into a trivial braid on $m$ strands. As we will keep the normal directions on all strands, the result will also be a directed indefinite fibration, but with embedded round image.

\smallskip
\noindent \textit{\underline{Step 2:} } We first push all Lefschetz singularities away from the base diagram, into a small open disk neighborhood $N_{\text{SP}}$ of the south pole, which we will regard as the point at infinity. The only time our modifications will involve this neighborhood is when we will swing a subarc of a round image component over the south pole. In this case we will always have the normal arrow on the arc pointing towards the south pole when we begin sliding it. So we can push all the Lefschetz singularities against it to continue the slide. For these reasons, we will not discuss Lefschetz singularities any further. 

Regard the base braid diagram of our pure virtual braid as a simultaneous graph of $m$ continuous functions $[0,1] \to (0,1)$. Take the strand $b$ whose end points are in the top most position; we will refer to it as the \emph{top strand}. Each time it has a local minimum (resp.\ a local maximum) in the interior, by using $\mathrm{R2}^1$ move repeatedly, pull it down (resp.\ raise it up) ---while avoiding braid crossing--- until it becomes a global minimum (resp.\ global maximum); see Figure~\ref{EmbFig1}. Repeat this for every local minimum and maximum of the same strand, until any local minimum/maximum $b$ has is a global minimum/maximum. The number of crossings may increase drastically during this procedure!

\begin{figure}[htbp] 
\includegraphics[width=\linewidth,height=0.15\textheight,
keepaspectratio]{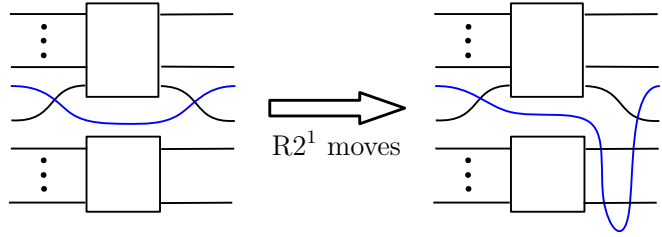} 
\caption{Lowering a local minimum of a strand (shown in blue) to a global minimum position. Normals to all strands are directed towards the top. In the two small boxes,
we may have any braids (on the corresponding numbers of strands).}
\label{EmbFig1}
\end{figure}

While the new crossing pattern between the strand $b$ we pulled around and the rest of the braid is clear, in between every consecutive global maxima (or minima) of $b$, lies some possibly highly non-trivial ---and not necessarily pure--- virtual braid on $m-1$ strands, which is split from our strand. We keep each one of these subbraids on $m-1$ strands in a box; see Figure~\ref{EmbFig2}. Moreover, we can use the braid closure to bring the boxes on the far left and far right together, and regard it as a single box. The closure of the resulting braid is shown in Figure~\ref{EmbFig2}, with normals to all strands still directed towards the north pole. 

\begin{figure}[htbp]
\psfrag{b}{$b$}
\psfrag{bb}{$b'$}
\psfrag{p}{$p$}
\psfrag{q}{$q$}
\psfrag{hb}{$\hat{b}$}
\psfrag{b1}{$B_1$}
\psfrag{b2}{$B_2$}
\psfrag{b3}{$B_3$}
\psfrag{br1}{$B_{2r-1}$}
\psfrag{br}{$B_{2r}$}
\includegraphics[width=\linewidth,height=0.3\textheight,
keepaspectratio]{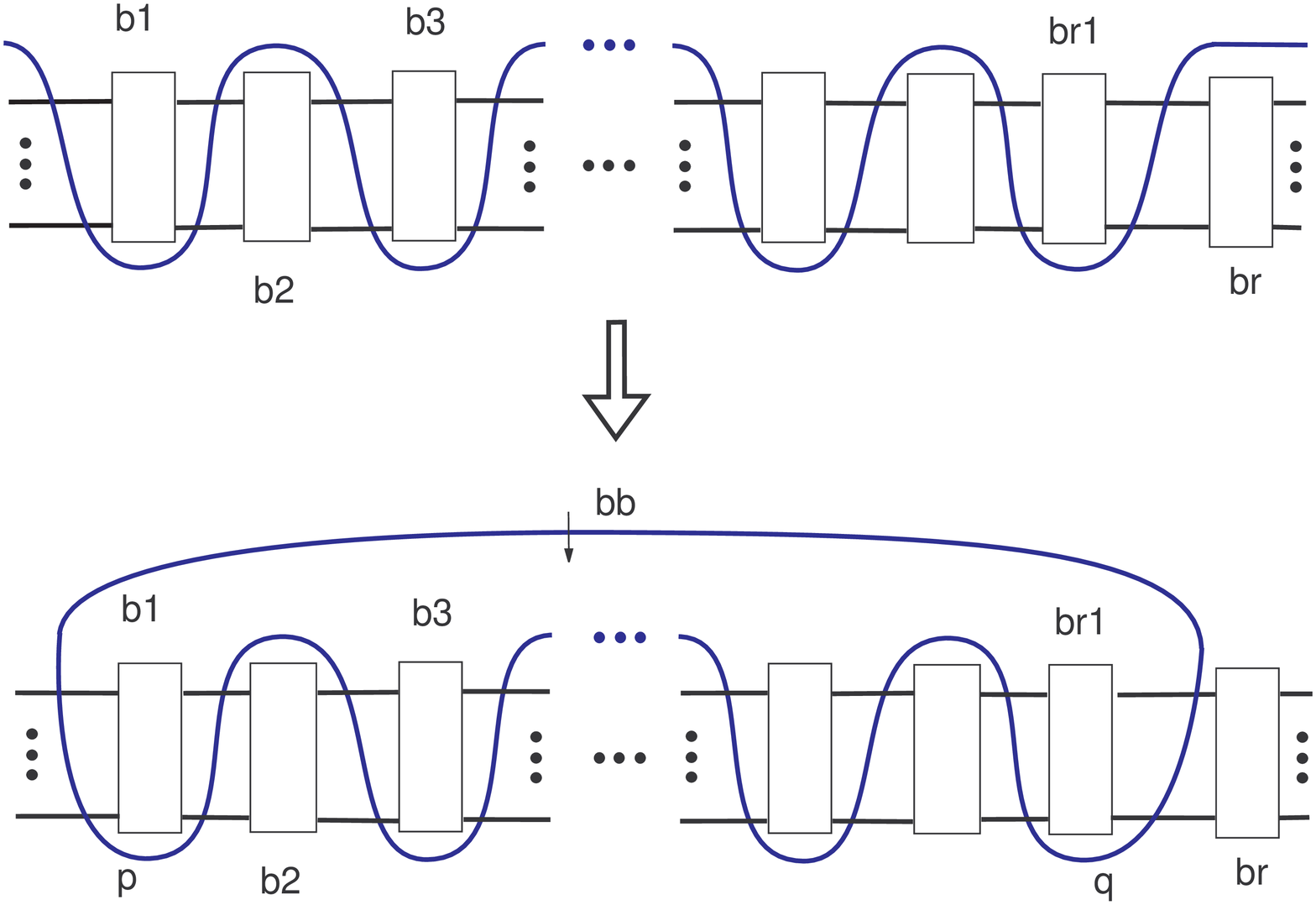} 
\caption{Strand $b$, depicted by blue, and the
other strands made up with braid boxes $B_1, B_2, \ldots, B_{2r}$ (Top).
Then, we slide the sub-strand $b'$ of $b$ over the north pole (Bottom).}
\label{EmbFig2}
\end{figure}

Observe that the number of boxes we have at the end is even; say $2r$. Here $r=0$ means 
the top strand $b$, and thus its closure $\hat{b}$, the innermost component of $Z_f$ (with respect to the north pole), do not intersect the others. In this case, we can isotope it into a small open disk neighborhood $N_{\text{NP}}$ of the north pole. Any arc we swing over the north pole with its normal arrow directed towards the north pole can go past this component by an $\mathrm{R2}^1$ move followed by an $\mathrm{R2}_2$ move. With a slight abuse, we will regard the portion of the round image $f(Z_f)$ in $S^2 \setminus N_{SP}$ as the ``base diagram''.

If $r>0$, then let $p$ and $q$ be the two global minimum points on $b$ closest 
to the ends of the strand $b$ in the diagram. (If there is only one global minimum, 
flatten the strand around it, so $p$ and $q$ are distinct points.) During the next
modifications, we will temporarily ruin the braid picture, and will only consider 
the base braid as a \emph{portion} of the round image $f(Z_f)$. Take the complement
$b'$ of $b$ in its closure $\hat{b}$. Slide $b'$ over the neighborhood $N_{\text{NP}}$ 
of the north pole so it approaches the non-trivial braid from the top; see 
Figure~\ref{EmbFig2}. The crucial point here is that $b'$ is directed \emph{against} 
all the braid components, so after several $\mathrm{R2}^0$, $\mathrm{R2}_2$, 
and $\mathrm{R3}_3$ moves (many of them involving $b$ as well), we can pull $b'$ all 
the way down, below any of the other braid strands (including minima of $b$), 
except for two kinks we unavoidably get around $p$ and $q$ on $b$. 
See the top diagram of Figure~\ref{EmbFig3}. 
During these Reidemeister moves, we fix the base braid, 
except for the far left and far right $m-1$ crossings of $b$ with other strands 
we eliminated when lowering $b$ (as we pulled $b'$ down) at the ends.

\begin{figure}[htbp]
\psfrag{p}{$p$}
\psfrag{q}{$q$}
\psfrag{bbb}{$B_{2r-1}B_{2r}B_1$}
\psfrag{bb}{$b'$}
\psfrag{b1}{$B_1$}
\psfrag{b2}{$B_2$}
\psfrag{b3}{$B_3$}
\psfrag{br1}{$B_{2r-1}$}
\psfrag{br2}{$B_{2r-2}$}
\psfrag{br3}{$B_{2r-3}$}
\psfrag{br}{$B_{2r}$}
\includegraphics[width=\linewidth,height=0.5\textheight,
keepaspectratio]{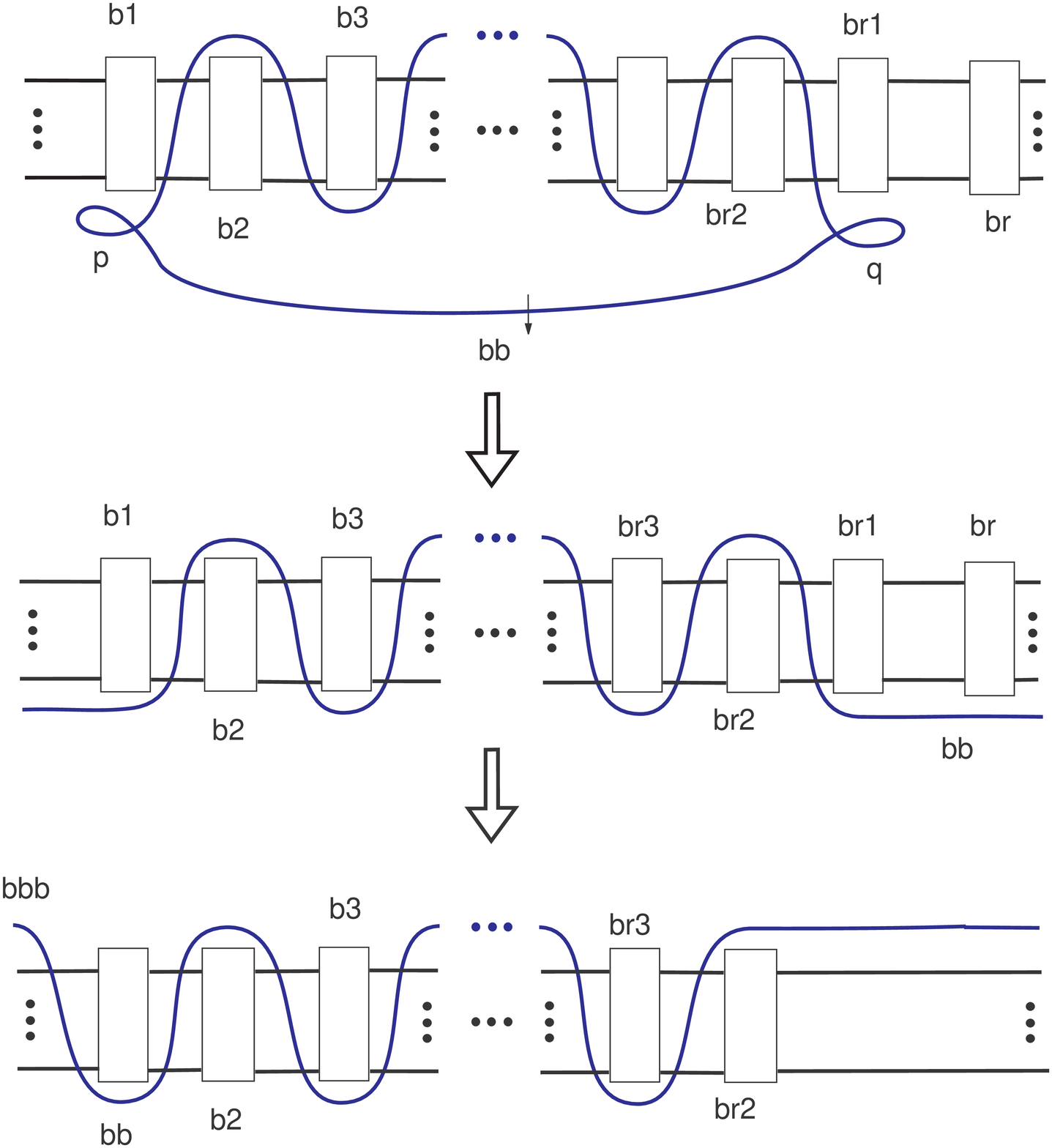} 
\caption{We lower $b'$, and then swing it over the south pole, while rearranging the
braid boxes $B_{2r-1}, B_{2r}$ and $B_1$ into one.}
\label{EmbFig3}
\end{figure}

After introducing a flip, and then applying $\mathrm{R2}_2$ and unsink moves, we can get rid of each kink as shown in Figure~\ref{fig44}. So we can continue pulling $b'$ and swing it over the south pole in order to bring it again to the other side of the base diagram. It now completes the missing strand of the trivial braid portion.

\begin{figure}[htbp]
\centering
\psfrag{f}{flip}
\psfrag{r}{$\mathrm{R2}_2$}
\psfrag{s}{unsink}
\includegraphics[width=0.8\linewidth,height=0.1\textheight,
keepaspectratio]{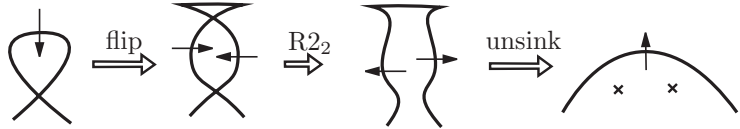}
\caption{Getting rid of the kinks.}
\label{fig44}
\end{figure}

So we have a new base braid on $m$ strands, which represents the new round image $f(Z_f)$.
The bottom strand came from sliding $b'$, 
whereas the rest of the braid diagram is the same as before (see the middle
diagram of Figure~\ref{EmbFig3}). As we get ready to repeat the whole 
procedure, we first observe that the bottom strand already has only global extrema. 
More importantly, we now have $2$ boxes on one side and $1$ box on the other side 
which do not have the bottom strand going in between. 
Using the braid closure as before, we can slide them into a one box.
We now have $2(r-1)$ boxes in total 
(see the bottom diagram of Figure~\ref{EmbFig3}). 

If the new number of boxes is not zero, then it means we still have some 
global maximum. Then, isotoping the round image, we can place the same component of 
$f(Z_f)$ in the braid diagram so that it is in the top position again. Hence, 
repeating the procedure $r-1$ more times, we arrive at a base braid with 
a bottom strand split from the others. 

For one last time, we slide the complement $b'$ of the new top strand $b$ over the north pole and across the braid diagram. Though this time we bring it below the rest of the braid diagram but keep it above $b$ (creating no kinks), so the round image component $b \cup b'$ is embedded and encloses a disk region without any singularities. Its normal arrow is directed towards the interior of the disk. We can isotope this component of the new round image $f(Z_f)$ into the small neighborhood $N_{\text{SP}}$ of the south pole, next to the Lefschetz singularities. As it was the case for a component we isotoped into $N_{\text{NP}}$, any arc we swing over $N_{\text{SP}}$ with its normal arrow directed towards the south pole can go past this component by an $\mathrm{R2}^1$ move followed by an $\mathrm{R2}_2$ move. 

\smallskip
\noindent \textit{\underline{Step 3:} }  With our remarks on the singular image isotoped into $N_{\text{NP}} \cup N_{\text{SP}}$ in mind, we can run \emph{Step 2} for the remaining virtual braid on $m-1$ strands. Repeating it $m-2$ times, we obtain a new indefinite fibration $f$ with embedded singular image. The singular image of $f$ is now contained in the two open disks $N_{\text{NP}}$ and $N_{\text{SP}}$, with $f(Z_f)$ consisting of $m$ split components (i.e.\ no two are nested). 
We view all in a large disk $D$ as shown on the left hand side of Figure~\ref{split2directed} below. Recall that the fiber over the south pole was connected. So the fiber over the south pole after Steps 1--2 is still connected: as we swung several indefinite fold arcs over this point, we only added $1$--handles to it. It follows that the regular fibers over the boundary of the disk $D$ are connected.

\begin{figure}[htbp] 
\includegraphics[width=\linewidth,height=0.27\textheight,
keepaspectratio]{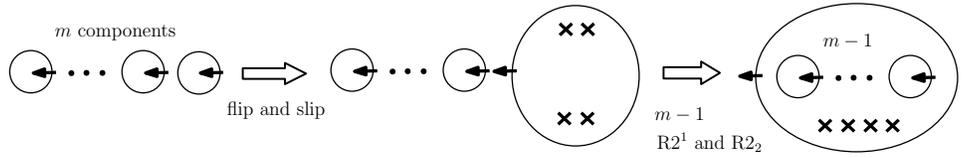} 
\caption{From split round image back to directed.}
\label{split2directed}
\end{figure}

\smallskip 
\noindent \textit{\underline{Step 4:} } Here we apply the \emph{flip and slip move} of 
\cite[Fig.~5]{B2}, to turn an inward-directed circle in $D$ inside out, so it becomes outward-directed (see Figure~\ref{fig151} in Subsection~\ref{ss:mixed}).

Apply flip and slip to the far right circle so it is now outward-directed. By exactly $m-1$ $\mathrm{R2}^1$ and $m-1$ $\mathrm{R2}_2$ moves again, we can pull the left half of the circle all the way to the left, so it now encloses all the other circles; see Figure~\ref{split2directed}. Repeating this procedure $m-2$ times for the split collection of circles contained inside, we arrive at a nested collection of outward-directed, embedded round image circles. All the Lefschetz singularities we had and produced during this process can be pushed into the innermost round image circle.

This is the base diagram for the resulting directed indefinite fibration 
$g\colon X \to S^2$ with embedded round image. Because the regular fibers over 
$\partial D$ are connected, and because all the others are obtained 
from them by fiberwise \mbox{$1$--handle} attachments and Lefschetz handle 
attachments, the  indefinite fibration $g$ is fiber-connected. 
\end{proof}

\smallskip
\begin{rk}  \label{rk:otherdim}
Theorems~\ref{mainthm1} and~\ref{mainthm2} together show that given any map from a closed orientable $4$--manifold to $S^2$, we can homotope it  to a generic map with embedded singular image. The corresponding statement is not true for an arbitrary map from a $3$--manifold to $S^2$ (or to other surfaces); there are obstructions to getting a generic map with embedded singular image within the same homotopy class \cite{CT,Gromov, Sa3}. Similarly, there are obstructions for maps from $4$--manifolds to $3$--manifolds \cite{SY}. \linebreak In this regard, maps from $n \geq 4$ dimensional manifolds to $S^2$ seem rather special. We hope to explore this phenomenon for dimensions $n >4$ in future work.
\end{rk}

%\smallskip
%\begin{rk} In a recent preprint \cite{W3}, Williams suggested an alternate algorithm for making the round image embedded (not necessarily directed) through a resolution of each double-point by homotopy moves.  The algorithm in \cite{W3} makes a generous use of not always-realizable  $\mathrm{R2}_1$, $\mathrm{R3}_1$ and $C^{-1}$ moves, which we have carefully avoided in this article, and we are unfortunately unable to justify their validity by the arguments presented in \cite{W2,W3}. As pointed out in Remark~\ref{rk:williams}, the key lemma \cite[Lemma~2.1]{W3} contains an error. \end{rk}

\smallskip
\subsection{Connected fibers and connected round locus} \
%======================================================================================

Here we will prove the following proposition, which, together with Theorems~\ref{mainthm1} and~\ref{mainthm2}, provides a sequence of base diagram moves to homotope any given indefinite fibration to one that is fiber-connected and has connected round locus --in short, \emph{simplified}. 

\begin{prop}\label{prop:locusconnected}
Let $X$ be a connected $4$--manifold. There exists a finite algorithm consisting of sequences  of flip, unsink, push,  cusp merge, and Reidemeister type moves $\mathrm{R2}^1$ and $\mathrm{R2}_2$, which homotopes any given inward-directed indefinite fibration $f\colon X \to D^2$ with embedded round image to a fiber-connected outward-directed indefinite fibration \mbox{$g\colon X \to D^2$} with embedded round image and connected round locus.
\end{prop}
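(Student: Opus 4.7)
The plan is to proceed by induction on the number $n \geq 1$ of connected components of the round locus $Z_f$. Since $f$ has embedded round image, $f(Z_f)$ is a disjoint union of $n$ embedded circles in $D^2$, partially ordered by nesting, and I would work with a maximal nested chain at a time (handling each of the disjoint ``trees'' of the forest of nested families in turn).

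For the base case $n = 1$, I would apply the flip-and-slip combination from Figure~\ref{fig151} (a flip, an $\mathrm{R2}_2$, and an unsink, all in the allowed move list) to convert the single inward-directed circle into an outward-directed one with an extra Lefschetz critical point inside. For fiber-connectedness, the connectedness of $X$ together with a monodromy argument in the style of Step~2 of the proof of Theorem~\ref{mainthm2} shows that the fibers over the outermost annular region are connected. If the fold has a separating vanishing cycle making the inner fiber disconnected, I would first apply a preliminary sequence of auxiliary moves from the allowed list---a flip followed by an $\mathrm{R2}^1$ and a cusp merge, chosen to replace the separating vanishing cycle with a non-separating one---to restore connectedness before performing flip-and-slip.

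For the inductive step $n \geq 2$, I would reduce $n$ by merging two round-locus components $C$ and $C'$ whose images bound a common region of $D^2$. The key steps are: (i) apply a flip to $C'$ on the side facing $C$, producing two cusps on $C'$ with a small reversed middle segment pointing outward toward $C$; (ii) apply an $\mathrm{R2}_2$ move between this reversed segment and the facing sub-arc of $C$, creating a bigon region with both arrows pointing inward; (iii) apply a flip to the $C$ sub-arc forming the bigon to introduce two cusps on $C$ positioned opposite those on $C'$; (iv) perform two cusp merges across the bigon, each pairing a cusp on $C$ with the corresponding cusp on $C'$, fusing the two circles into a single round-locus component and simultaneously eliminating the bigon and crossings; (v) if needed, reposition any Lefschetz critical points via pushes and absorb any stray bigons via $\mathrm{R2}^1$ or $\mathrm{R2}_2$ to maintain embedded round image.

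The main technical obstacle is ensuring that each cusp merge in step (iv) is always-realizable, which requires fiber-connectedness over the bigon region. I would establish this by propagating connectedness outward-to-inward from the outermost region (connected by the monodromy argument above), using that each inward fold crossing is a fiberwise $2$-handle attachment which preserves connectedness when the vanishing cycle is non-separating. When a separating vanishing cycle obstructs connectedness locally, I would again apply an auxiliary flip/$\mathrm{R2}^1$/cusp merge sequence to reroute the fold so that the vanishing cycle becomes non-separating, restoring local connectivity before the main merge. Iterating the reduction until $n = 1$ and then invoking the base case yields the desired outward-directed, fiber-connected indefinite fibration $g$ with embedded round image and connected round locus.
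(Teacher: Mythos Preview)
Your proposal has a genuine gap, and it differs from the paper's argument in a way that matters.

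First, a concrete error: in step (ii) you invoke $\mathrm{R2}_2$ to \emph{create} a bigon with both arrows pointing in. By the paper's conventions (see the discussion preceding Figure~\ref{Figure: R2}), the subscripted move $\mathrm{R2}_2$ \emph{starts} from a bigon with two inward arrows and removes the crossings; its pseudo-inverse $\mathrm{R2}^2$, which creates such a bigon, is \emph{not} always-realizable and is not in the allowed move list. If instead you mean to push the flipped bump of $C'$ across $C$, the bigon you produce has only one arrow pointing in (since the $C$ arc, being inward-directed, points \emph{away} from that bigon), so the move is $\mathrm{R2}^1$ and your description ``both arrows pointing inward'' is incorrect. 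Either way, the configuration you feed into steps (iii)--(iv) is not what you claim.

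Second, and more seriously, your handling of fiber-connectedness is not an algorithm. You correctly identify that the cusp merges in (iv) require connected fibers over the joining region, and that an inward-directed fold (a fiberwise $2$--handle) can disconnect fibers. Your proposed remedy, an ``auxiliary flip/$\mathrm{R2}^1$/cusp merge sequence chosen to replace the separating vanishing cycle with a non-separating one,'' is not a well-defined procedure: a cusp merge does not alter the vanishing cycle of an existing fold arc, and you have not exhibited any sequence of the allowed moves that forces a given fold's vanishing cycle to become non-separating. The same vagueness appears in your base case.

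The paper sidesteps this entire difficulty with a different organizing idea: rather than merging nested circles in interior regions where connectivity is uncertain, it first \emph{un-nests} the $m$ concentric inward-directed circles into a split (pairwise disjoint, non-nested) collection using only $\mathrm{R2}^1$ and $\mathrm{R2}_2$ moves. After this, every circle is adjacent to the outermost region, whose fibers are connected simply because $X$ is connected. All the flips and cusp merges (two flips on each circle, then $m-1$ cusp merges along a chain) are then performed with joining curves lying in this connected outer region, so realizability is automatic. A final batch of $\mathrm{R2}_2$ and unsink moves restores an embedded, outward-directed round image. You should replace your nested-merge induction with this un-nesting step; it is what makes the cusp merges legitimately always-realizable.
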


\begin{figure}[htbp] 
\includegraphics[width=\linewidth,height=0.27\textheight,
keepaspectratio]{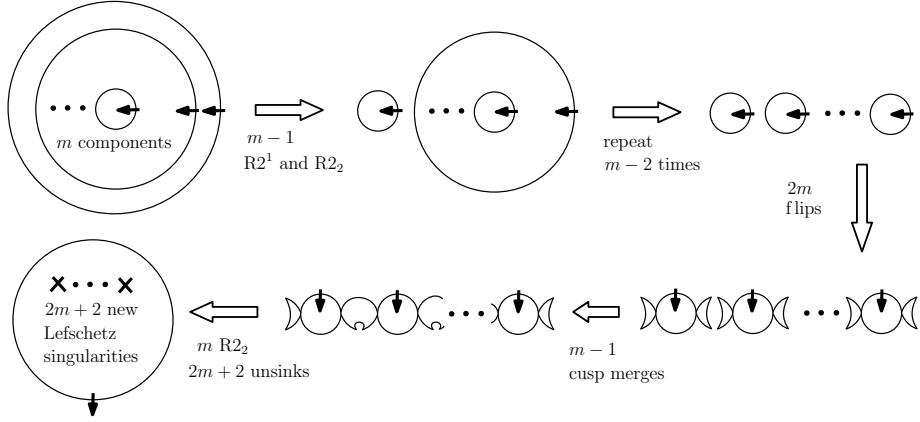} 
\caption{Making fibers and the round locus connected.}
\label{blackboard}
\end{figure}

\begin{proof}
Observe that since $X$ is connected, even if there are disconnected 
fibers, the regular fibers over $\partial D^2$ should be connected.

The first part of the algorithm is the same as in Steps 2--3 of our proof 
of Theorem~\ref{mainthm2}, carried out in much simpler case of a pure virtual 
braid. We first push out all the Lefschetz singularities so they are near the 
boundary of the disk, and do not interfere with all the other moves we will perform. 
Assume that there are $m >1$ components. By exactly $m-1$ $\mathrm{R2}^1$ and 
$m-1$ $\mathrm{R2}_2$ moves, we can pull out the 
right half of the outermost circle all the way left, so it is now disjoint 
from the rest. Repeating this $m-2$ times for the next outermost circle of 
the nested circles each time, we arrive at a split collection of $m$ inward-directed embedded circles as in the top row of Figure~\ref{blackboard}. 

Now flip each circle twice, and then merge all into one immersed circle 
by $m-1$ cusp merges as shown in Figure~\ref{blackboard}. 
(Note that these cusp merges are realizable, since the fibers over points
near the boundary are all connected.)
This is followed by $m$ $\mathrm{R2}_2$ and $2m+2$ unsink moves to arrive at an outward-directed embedded round image, where the resulting indefinite fibration $g\colon X \to D^2$  has connected round locus. Once again, it is fiber-connected, 
since all the fibers are obtained by fiberwise \mbox{$1$--handle} attachments and Lefschetz handle attachments.
\end{proof}

\smallskip
\begin{rk}\label{remark:boundary}
The procedures we have given in Theorem~\ref{mainthm1}, in Step 1 of Theorem~\ref{mainthm2} and in Proposition~\ref{prop:locusconnected} can all be applied in a more general setting. 
Let $X$ be a compact oriented $4$--manifold with corners, $\Sigma$ be a compact oriented surface with boundary, 
and $f \colon X \to \Sigma$ be
an indefinite fibration such that 
\begin{itemize}
\item $\partial X = P \cup Q$, where $P$ and $Q$
are compact codimension zero submanifolds of $\partial X$
and are glued along $\partial P = \partial Q = P \cap Q$,
\item $X$ has corners exactly along $\partial P = \partial Q$,
\item $f^{-1}(\partial \Sigma) = Q$,
\item $f|_Q \colon Q \to \partial \Sigma$ and $f|_P \colon P \to \Sigma$ are submersions.
\end{itemize}
In particular, when $\Sigma = D^2$, we have a naturally induced open book structure on $\partial X$.
The moves we have described can be seen to work for such an indefinite fibration as well. For birth/death, flip/unflip,
fold merge, criss-cross braiding, and Reidemeister type moves, the handlebody arguments we used are implicitly 
based on ascending and descending
manifolds for gradient-like vector fields (for example, see \cite{GK2}).
Let us take an embedded arc $\alpha$ in $\Sigma$ which is
transverse to $f$. Then, $f|_{f^{-1}(\alpha)} \colon
f^{-1}(\alpha) \to \alpha$ is a Morse function. As $f$ is a submersion
on $P$, the fibers of $f$ are compact surfaces with boundary
and this Morse function is a submersion on the closure of
the boundary of $f^{-1}(\Int{\alpha})$. So we can
choose a gradient-like vector field that is tangent to the
boundary. This means that an integral curve never hits the
boundary (except along $f^{-1}(\partial \alpha)$) as long as 
we start from an interior point. Therefore, the above mentioned moves can be realized by
exactly the same method as before. Unsink, push, cusp merge, wrinkle, and $C$-moves are easily seen to be realizable as well.
\end{rk}

\smallskip
\section{Realizing a prescribed $1$--manifold as the round locus} \label{Sec:Realization}
%================================================================================

The purpose of this section is to show that any null-homologous $1$--dimensional embedded 
closed oriented submanifold $Z$ of $X$, with any prescribed twisting data 
satisfying a certain necessary condition 
(for the local models around components) can be 
realized as the round locus $Z_f$ of a fiber-connected, directed broken Lefschetz 
fibration $f\colon X \to S^2$ with embedded 
round image.

First, we note a couple observations on the round locus of a broken Lefschetz fibration/pencil, which will naturally 
appear as necessary conditions for our arguments to follow. Analogous statements for the zero loci of near-symplectic structures 
are well-known \cite{P2}, so one can also appeal to \cite{ADK} to build a near-symplectic form whose zero locus is the same 
as the round locus, and translate these results. We will instead sketch parallel arguments in the realm of indefinite fibrations. 

\begin{prop}\label{Nullhomologous} Let $f: X \to \Sigma$ be an indefinite fibration, where
$X$ and $\Sigma$ are closed. Then,
$Z_f$ is a closed $1$--dimensional submanifold of $X$ which is canonically oriented. Furthermore, 
$Z_f$ is null-homologous, i.e., $[Z_f]=0$ in $H_1(X; \Z)$.
\end{prop}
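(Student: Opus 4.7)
My proof plan is to tackle the three assertions---closed $1$-submanifold, canonical orientation, and null-homology---in turn. The first two are local. The indefinite fold model $(t, x_1, x_2, x_3) \mapsto (t, \pm x_1^2 \pm x_2^2 \pm x_3^2)$ identifies $Z_f$ locally with the smooth $t$-axis, while a direct rank-drop computation in the cusp model $(t, x_1, x_2, x_3) \mapsto (t, x_1^3 + tx_1 \pm x_2^2 \pm x_3^2)$ shows $Z_f$ passes through the cusp as the smooth curve $\{x_2 = x_3 = 0,\ 3x_1^2 + t = 0\}$. Thus $Z_f$ is a smooth $1$-manifold, and compactness of $X$ forces it to be a disjoint union of finitely many circles. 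The canonical orientation at each non-cusp fold point will be defined by pairing the ``higher-side'' normal arrow on $f(Z_f) \subset \Sigma$ (intrinsic to the local fold model) with the orientation of $\Sigma$ to specify a tangent direction on $f(Z_f)$, then pulling back via the local embedding $f|_{Z_f}$; smooth extension across cusps is verified by a short direct computation in cusp coordinates.

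For the null-homology, I plan to reduce to the analogous fact about near-symplectic zero loci. First I would apply the unsink move at each cusp of $f$, trading it for a Lefschetz singularity and leaving $Z_f$ intact as an oriented $1$-submanifold; this reduces to the case of a broken Lefschetz fibration. Next, I construct a cohomology class $h \in H^2(X; \R)$ evaluating positively on every connected component of every fiber of $f$---standard de Rham arguments using compactness of fibers and finiteness of singular fibers make this straightforward. With such $h$ in hand, I apply the Thurston--Gompf direction of \cite{ADK} (extended from broken Lefschetz pencils to broken Lefschetz fibrations in a routine manner, or obtained from a pencil by blowing down its base points) to produce a near-symplectic form $\omega$ on $X$ whose zero locus coincides with $Z_f$ as oriented $1$-manifolds, with matching twisting data supplied by the Honda local classification recalled in the Preliminaries.

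The null-homology of $Z_\omega$---and hence of $Z_f$---then follows by a Pontryagin--Thom argument: the intrinsic-gradient-rank-$3$ condition in the definition of near-symplectic forms, together with the Honda local model, realizes $Z_\omega$ as the transverse zero locus of a section of an oriented rank-$3$ vector bundle over $X$ (the normal bundle along $Z_\omega$ glued to a trivial rank-$3$ bundle off a tubular neighborhood), whose Euler class vanishes by a characteristic-class argument exploiting orientability of $X$ and the almost complex structure on $X \setminus Z_\omega$ induced by the non-degenerate part of $\omega$. The hard step in this plan will be the construction of the positive cohomology class $h$ for fibrations with disconnected or otherwise complicated fibers; this can be handled by standard techniques, or circumvented by first applying the simplification results of the preceding sections to homotope $f$ to a fiber-connected form before assembling $h$.
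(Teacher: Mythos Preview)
Your treatment of the first two assertions is fine and matches the spirit of the paper's brief discussion. The serious problem is in your argument for null-homology.

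The route through near-symplectic forms has a genuine gap: the cohomology class $h \in H^2(X;\R)$ evaluating positively on every fiber component need not exist. A closed oriented $4$--manifold admits a near-symplectic form if and only if $b_2^+(X) > 0$, and the Thurston--Gompf construction of \cite{ADK} you invoke needs exactly such an $h$. When $b_2^+(X)=0$ there is no such class: for instance $S^4$ carries a broken Lefschetz fibration (the genus--$1$ simplified one discussed later in the paper), but $H^2(S^4;\R)=0$, so every class pairs to zero with every fiber. Your phrase ``standard de Rham arguments \ldots make this straightforward'' is therefore incorrect, and homotoping $f$ to a fiber-connected form does nothing to help---the obstruction is global, coming from $H^2(X)$, not from the combinatorics of the fibers. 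The subsequent Pontryagin--Thom paragraph is also sketchy (you would need to identify the Euler class of the relevant rank--$3$ bundle and explain why it vanishes), but this is moot once the construction of $\omega$ fails.

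The paper's proof avoids this entirely. It first perturbs the Lefschetz points by wrinkling (the \emph{opposite} direction from your unsink step) so that $f$ becomes an indefinite generic map; the new singular circles are born in $4$--balls and hence null-homologous, so the question reduces to the original $Z_f$. Then it invokes a result of Ando \cite{An}: for any generic map of an oriented $4$--manifold to a surface, the singular set is canonically oriented and its Poincar\'e dual in $H^3(X;\Z)$ equals the integral Stiefel--Whitney class $w_3(X)$. Since every closed oriented $4$--manifold is spin$^c$, $w_3(X)=0$, and the result follows. This characteristic-class argument works uniformly, with no hypothesis on $b_2^+$.
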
 

\begin{proof}
If there are Lefschetz singular points, then use a wrinkling move to perturb each Lefschetz singular point into an indefinite 
singular circle with $3$ indefinite cusps. Born in a $4$--ball, each one of these circles is null-homologous. 
Thus we may as well assume that $f$ is an indefinite generic map. Then by \cite{An}, the singular point set $Z_f$ of $f$ 
is orientable. Moreover, it is shown in \cite{An} that the Poincar\'e dual of  $[Z_f] \in H_1(X ; \Z)$ is equal to the 
integral Stiefel--Whitney class $w_3(X) \in H^3(X; \Z)$. 
However, $w_3$ is the obstruction to admitting a spin$^c$ structure, and it vanishes on a closed oriented 
$4$--manifold $X$; see \cite[\S4]{HH}, \cite{Wh0}. 
\end{proof}

\smallskip
\begin{prop} \label{untwisted}
Let $f \colon X \to \Sigma$ be a broken Lefschetz fibration, where
$X$ and $\Sigma$ are closed and connected. 
Then, the number of untwisted (even) components 
of $Z_f$ has the same parity as $1 + b_1(X) + b_2^+(X)$. 
\end{prop}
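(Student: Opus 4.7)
The plan hinges on the elementary identity
\[
\chi(X) + \sigma(X) \;=\; 2\bigl(1 - b_1(X) + b^+_2(X)\bigr),
\]
valid for any closed oriented $4$--manifold $X$, from which
\[
1 + b_1(X) + b^+_2(X) \;\equiv\; \tfrac{1}{2}\bigl(\chi(X) + \sigma(X)\bigr) \pmod{2}.
\]
The claim thus reduces to the parity congruence
\[
\#\{\text{untwisted components of } Z_f\} \;\equiv\; \tfrac{1}{2}\bigl(\chi(X) + \sigma(X)\bigr) \pmod{2},
\]
which I would verify by computing $\chi(X) + \sigma(X)$ modulo $4$ directly from the BLF structure.

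I would decompose $X$ as the union of a surface--bundle piece $X_0$ over $\Sigma$ minus the critical values, tubular neighborhoods of the Lefschetz critical fibers, and tubular neighborhoods $\nu(\gamma)$ of the round circles $\gamma \subset Z_f$. Each Lefschetz critical fiber neighborhood is a $2$--handle whose attachment contributes $+1$ to $\chi$ and $-1$ to $\sigma$, hence $0$ to $\tfrac{1}{2}(\chi+\sigma)$ modulo $2$; in particular, Lefschetz singularities do not affect the parity in question. Each round--fold neighborhood $\nu(\gamma)$ is modeled on $N_+$ or $N_-$ of Section~\ref{Sec:Preliminaries} depending on whether $\gamma$ is untwisted or twisted, and has vanishing Euler characteristic. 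The key point is that the signature contribution of $\nu(\gamma)$, assembled via Wall non--additivity with the remaining pieces of $X$, is odd modulo $2$ for an untwisted $\gamma$ and even for a twisted $\gamma$; one expects to see this by exploiting the product structure $N_+ \cong S^1 \times D^3$ in the untwisted case and by reducing the twisted case to two copies of the untwisted one via the orientable double cover. The bulk piece $X_0$, being a genuine surface bundle over a punctured surface, contributes $\chi(F)\chi(\Sigma_0)$ to $\chi$ (an even number) and a signature that can be computed via Meyer's signature cocycle; after gluing, its mod--$2$ contribution to $\tfrac12(\chi+\sigma)$ must be shown to vanish.

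The main obstacle will be the signature bookkeeping, in particular the Wall non--additivity corrections along the gluing loci $\partial N_\pm$ and the careful matching of orientations and framings across the pieces. As a robust alternative, I would appeal to the near--symplectic translation already indicated by the authors: after first applying the simplifications of Theorems~\ref{mainthm1} and~\ref{mainthm2} and pushing Lefschetz critical values to arrange a cohomology class in $H^2(X;\mathbb{R})$ that evaluates positively on every fiber component, one invokes the Thurston--Gompf construction of \cite{ADK} to produce a near--symplectic form $\omega$ on $X$ with $Z_\omega = Z_f$ matching the twisting data of $Z_f$, and then quotes the known parity formula for the zero locus of a near--symplectic form from \cite{P2}. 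Either route yields the claimed congruence.
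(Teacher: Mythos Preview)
Both routes have genuine gaps. In your first approach, the claim that a Lefschetz $2$--handle contributes $-1$ to the signature is false: the elliptic Lefschetz fibration on $E(1)$ has twelve Lefschetz singularities but $\sigma(E(1))=-8$, and by Novikov additivity this $-8$ is exactly the signature of the fibration over the disk built from $T^2\times D^2$ by the twelve Lefschetz handle attachments. The signature contribution of a single Lefschetz handle depends on its vanishing cycle and its interaction with the previously attached handles, not on the local model alone; your parity claims for $N_\pm$ suffer from the same defect and are asserted rather than argued. The Wall non-additivity bookkeeping you flag as the ``main obstacle'' is indeed the entire content of such an argument, and it is not carried out. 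Your alternative route through \cite{ADK} has a more basic problem: the Thurston--Gompf construction requires a class in $H^2(X;\R)$ positive on every fiber component, which in particular forces $b_2^+(X)>0$, yet the proposition must cover broken Lefschetz fibrations on $S^4$, $\CPb$, $\#_k(S^1\times S^3)$, and the like. Moreover, the simplifying homotopies of Theorems~\ref{mainthm1} and~\ref{mainthm2} alter both $Z_f$ and its twisting data, so even when the construction applies you would only obtain the parity for the modified fibration, not for the given $f$.

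The paper avoids both difficulties by adapting Perutz's argument \emph{directly} to the broken Lefschetz setting rather than passing through a near-symplectic form: from the fibration one constructs an almost complex structure on $X\setminus Z_f$ (rotation by $\pi/2$ on the vertical and horizontal tangent planes away from Lefschetz points, the local complex model near them) and identifies the mod--$2$ obstruction to extending it over $Z_f$ with both the number of untwisted circles and $1-b_1(X)+b_2^+(X)$, exactly as in \cite{P2}. This works uniformly, with no hypothesis on $b_2^+$ and no need to perturb $f$.
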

\vspace{-0.2cm}
\begin{proof}
Here is a proof verbatim to the one given in \cite{P2} for near-symplectic forms. Given a Riemannian metric on $X$, 
we can naturally construct an almost complex structure on $X \setminus Z_f$. Off the Lefschetz critical points, we define the 
almost complex structure as the rotation of $\pi/2$ on the tangent planes to the regular part of fibers and also on the normal 
planes. Around Lefschetz critical points, this is modified to match the complex structures coming from the local models. 
Then, the obstruction to extending the above almost complex structure to whole of $X$ coincides with both the 
number of untwisted circles in $Z_f$
and $1 - b_1(X) + b_2^+(X)$ modulo $2$ \cite{P2}.
\end{proof}

We can now prove the main result of this section:
\begin{thm} \label{dei}
Let $X$ be a closed connected oriented $4$--manifold and $Z$ be a 
(non-empty) null-homologous 
closed oriented $1$--dimensional submanifold of $X$. There exists a finite algorithm 
consisting of sequences of always-realizable moves flip, cusp merge, unsink, push, 
criss-cross braiding, and multi-germ moves $\mathrm{R2}^0$, $\mathrm{R2}^1$, $\mathrm{R2}_2$, 
$\mathrm{R3}_2$ and $\mathrm{R3}_3$, which homotopes any given indefinite fibration 
$f \colon X \to S^2$ with non-empty round locus to a fiber-connected, directed broken 
Lefschetz fibration \mbox{$g\colon X \to S^2$} with embedded round image, whose round 
locus $Z_g$ coincides with $Z$ as oriented \mbox{$1$--manifolds}. 
Moreover, for any prescribed twisting data for $Z$, with number of untwisted components 
congruent \mbox{modulo $2$} to $1 + b_1(X) + b^+_2(X)$, $g$ can be chosen 
so that $Z_g$ and $Z$ have matching twisting data.
\end{thm}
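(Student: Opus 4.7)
The plan is to first simplify $f$ using the results of Section~\ref{Sec:Embedded}, then deform its round locus into $Z$ by a sequence of \emph{band surgeries} guided by a cobordism surface in $X$, and finally impose the prescribed twisting data.

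To begin, I apply Theorems~\ref{mainthm1} and~\ref{mainthm2} and Proposition~\ref{prop:locusconnected} in succession (all via the always-realizable moves from the statement) to replace $f$ by a fiber-connected, directed broken Lefschetz fibration $f_1\colon X\to S^2$ with embedded round image and connected round locus $K_0$, where $K_0$ is a single null-homologous circle (cusps of $f$ are removed with unsinks in the process). Since both $K_0$ and $Z$ are null-homologous in the closed $4$--manifold $X$, after a small ambient isotopy making them disjoint, the disjoint union $Z\sqcup(-K_0)$ is null-homologous and bounds a compact oriented embedded surface $S\subset X$. Pushing the Lefschetz critical points of $f_1$ out of a neighborhood of $S$, we may assume $S$ is disjoint from $C_{f_1}$.

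Next, I choose a self-indexing Morse function $h\colon S\to[0,1]$ with $h^{-1}(0)=K_0$ and $h^{-1}(1)=Z$, yielding a decomposition of $S$ into $2g+n-1$ bands, where $g$ is the genus of $S$ and $n$ is the number of components of $Z$, each prescribing how the round locus is surgered at the corresponding level. To realize each band surgery as a homotopy of the indefinite fibration through the listed always-realizable moves, I apply a flip along the round locus near one endpoint of the band to produce a cusp pair, drag one of these cusps along the band to the other endpoint using push moves together with Reidemeister moves $\mathrm{R2}^0$, $\mathrm{R2}^1$, $\mathrm{R2}_2$, $\mathrm{R3}_2$, $\mathrm{R3}_3$ and criss-cross braiding (Proposition~\ref{lem:umbilic}) to swap strands that block the path, and finally complete the surgery with a cusp merge. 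After processing all bands, the resulting round locus matches $Z$ as an oriented $1$--submanifold of $X$. Lastly, to impose the prescribed twisting data, I use that an appropriate local sequence of flip, Reidemeister, and cusp merge moves along an arc of a given round circle reverses its twist type without altering its isotopy class; by Proposition~\ref{untwisted} the parity condition is exactly what is needed, and the assumed parity allows each component's twist type to be set independently.

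The hardest step will be the dragging of a cusp along a band in the presence of other strands of the round image. Most of the natural moves for sliding a strand past another---in particular $\mathrm{R2}_0$ and $\mathrm{R2}_1$---are not always-realizable. The criss-cross braiding move is the crucial tool that supplies an always-realizable detour for adjacent-strand swaps, under the assumption that the nearby fibers are connected, which must be maintained throughout the algorithm by re-applying the fiber-connecting procedure of Proposition~\ref{prop:locusconnected} whenever necessary.
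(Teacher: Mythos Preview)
Your overall strategy---simplify first, then deform the round locus to $Z$, then fix twists---is sound and matches the paper's outline, but the execution of the second and third steps has genuine gaps.

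For the deformation step, you propose to realize each band of a cobordism surface $S$ by ``dragging a cusp along the band'' via base diagram moves.  The difficulty is that the listed moves act on the singular \emph{image} in $S^2$, not on arbitrary arcs in $X$; a band core in $S$ need not project to anything manageable, and there is no move in your list that transports a cusp along a prescribed path in $X$ (push moves only move Lefschetz points, and the $C$--move is not among the allowed moves).  The paper avoids this by a $\pi_1$ argument: since the inclusion of a highest-genus fiber $F$ induces a surjection $\pi_1(F)\twoheadrightarrow\pi_1(X)$, every homotopy class (hence isotopy class, in codimension~$3$) of a circle in $X$ is represented by a loop in $F$, which after a small perturbation becomes a valid joining curve for a cusp merge whose image is an embedded arc in the base.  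Thus each desired component of $Z$ is created directly by flip~+~cusp merge with a controlled joining curve; the last component is then corrected commutator-by-commutator using a separate explicit sequence of moves.  Your cobordism surface gives the homological bookkeeping but not the mechanism for turning a band into an admissible cusp-merge path.  Also, invoking Proposition~\ref{prop:locusconnected} ``whenever necessary'' to restore fiber-connectedness would alter the round locus you are carefully building; the paper instead arranges all cusp merges to take place over the highest-genus region, where fibers are already connected.

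For the twisting step, your assertion that a local flip/Reidemeister/cusp-merge sequence on a single circle reverses its twist type is not justified, and in fact cannot hold for one component in isolation without violating the parity constraint of Proposition~\ref{untwisted}.  The paper handles this differently: the twist type of each newly created component is set \emph{at the moment of its cusp merge}, using the freedom to rotate the fiberwise coordinates by $\pi$ along the joining curve; the parity hypothesis then forces the last component to come out correctly automatically.
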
 

\begin{proof} 
Take the indefinite fibration $f$ on $X$, and apply the procedures we presented in the proofs of 
Theorems~\ref{mainthm1} and \ref{mainthm2} to get a \emph{directed} indefinite fibration with \emph{embedded} 
round image. We can also make it fiber-connected and with connected round locus by applying the procedure given 
for Proposition~\ref{prop:locusconnected}. Note that since the base is $S^2$, each time we 
have an inward-directed or an outward-directed image, we also have the other. At the end, we have a fiber-connected 
indefinite fibration with embedded and connected round image, which we will continue to denote by $f$. 
We can also assume that $f$ has no cusps by applying unsink moves. In the following, we will modify $f$ step by step. 
For simplicity, after each step, we continue to denote the resulting map by the same symbol $f$.

Let $F$ denote the highest genus regular fiber. Then, the inclusion $i\colon F \to X$ induces an epimorphism 
$i_*\colon \pi_1(F) \to \pi_1(X)$, as seen from the handlebody decomposition of $X$ induced by the broken 
Lefschetz fibration $f$ \cite{B1}. (The kernel is normally generated by the vanishing cycles of the round 
$2$--handle and Lefschetz $2$--handles, together with the attaching circle of one more $2$--handle pulled 
from the lower side.)

\begin{figure}[htbp]
\centering
\psfrag{f}{flip}
\psfrag{m}{cusp merge}
\psfrag{r}{$\mathrm{R2}_2$}
\psfrag{s}{unsink}
\psfrag{i}{isotopy}
\psfrag{i2}{isotopy on $S^2$}
\psfrag{o}{on $S^2$}
\psfrag{t}{P}
\includegraphics[width=\linewidth,height=0.7\textheight,
keepaspectratio]{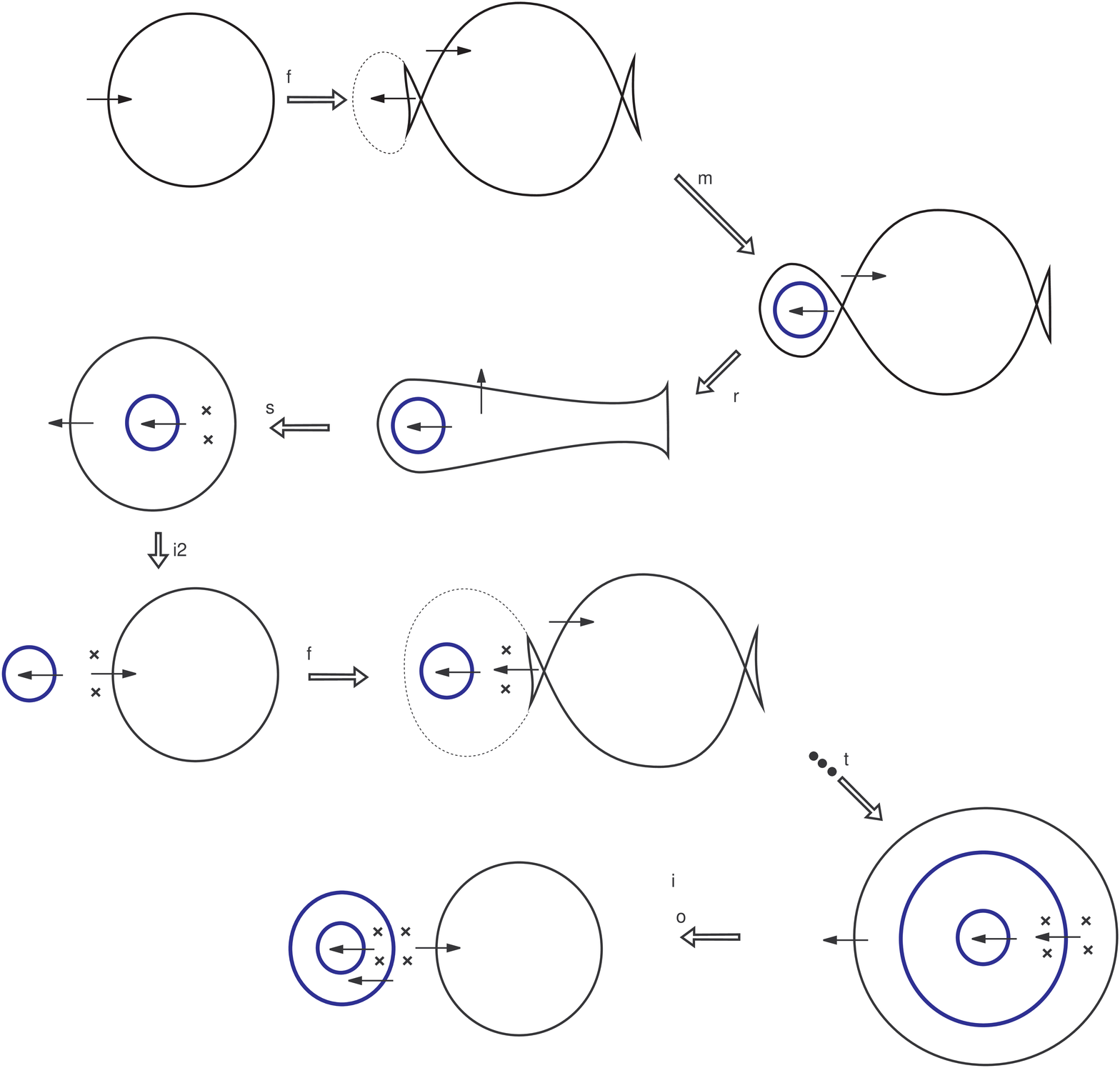} 
\caption{Realizing prescribed components, whose images are depicted by 
blue circles.}
\label{fig181}
\end{figure}

Now, by applying the moves as in Figure~\ref{fig181}, we can make prescribed components in $Z_f$ 
in such a way that all but one component of $Z_f$ are isotopic to the corresponding components 
of $Z$. These are mainly base diagram moves, except we pay additional attention to how we perform 
cusp merges: we merge them along paths which approximate the components of $Z$ we would like to realize. 
Also, we first assume that the component of $Z$ in question is isotoped so that the restriction of 
$f$ to it is already an embedding, the image of which is given by a blue 
circle in the figure. 
(This is achieved by representing the homotopy class of the component by
a loop $\gamma$ in a highest genus fiber $F$ and then by perturbing it to the
loop defined by $S^1 \ni t \mapsto (\gamma(t), t) \in F \times S^1$, where
$S^1$ is a small embedded circle in the base surface and $F \times S^1 = f^{-1}(S^1)$.)
Note that the path for each cusp merge move, depicted by a dotted line, lies in a region corresponding to the highest genus region before the preceding flip. Therefore, we can adjust the homotopy classes (and hence the isotopy classes) of the components of $Z_f$ corresponding to the blue circles, by the surjectivity of $i_*$ mentioned above. The process indicated by the letter ``P'' in Figure~\ref{fig181} consists of a repetition of the preceding operations.

\begin{figure}[htbp]
\centering
\psfrag{f}{flip}
\psfrag{m1}{cusp}
\psfrag{m2}{merge}
\psfrag{r}{$\mathrm{R2}_2$}
\psfrag{e}{$\tilde{R}$}
\psfrag{i}{isotopy on $S^2$}
\psfrag{a}{$\alpha$}
\psfrag{b}{$\beta$}
\includegraphics[width=\linewidth,height=\textheight,
keepaspectratio]{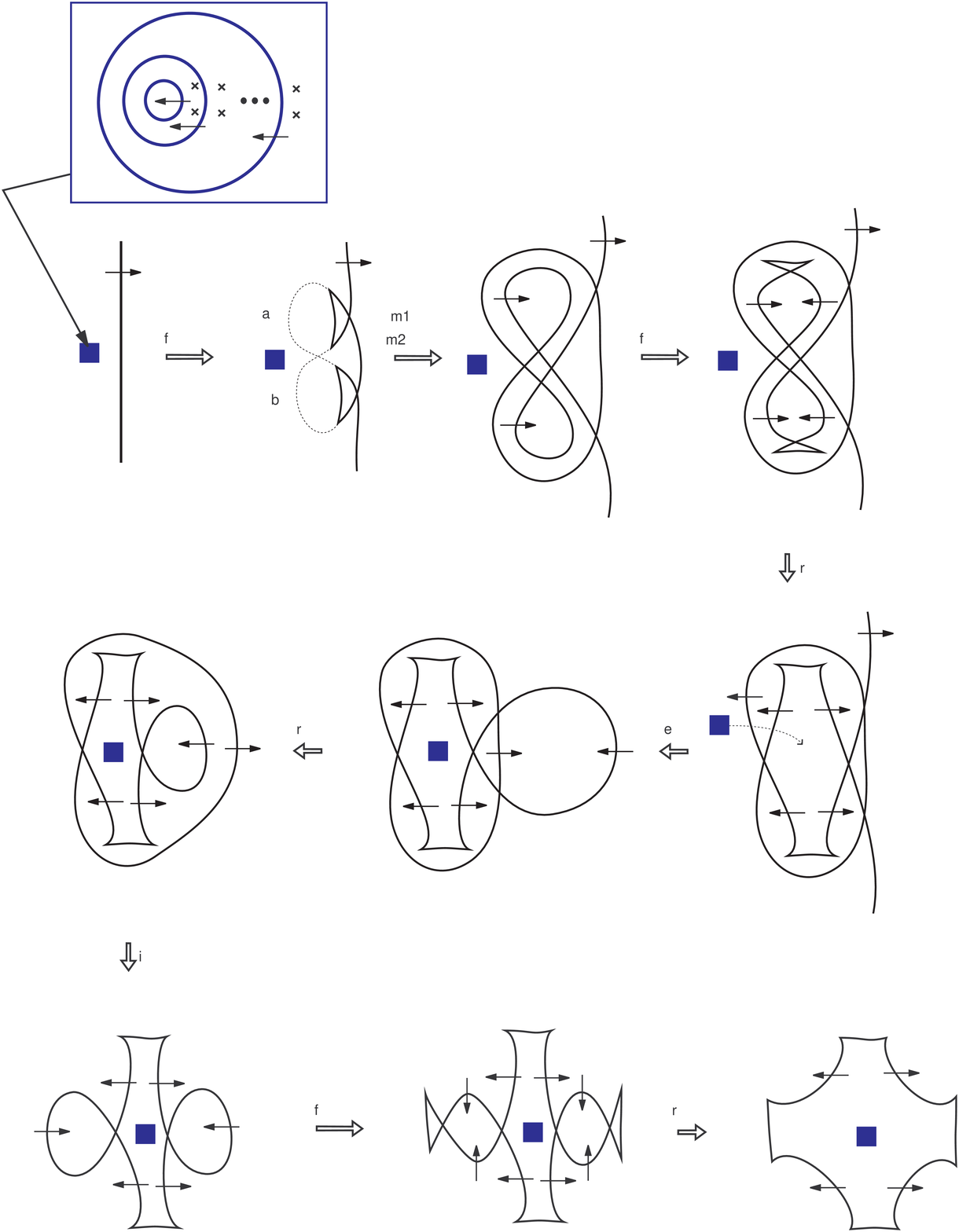}
\caption{Adjusting the last prescribed component.}
\label{fig191}
\end{figure} 

By assumption, $Z$ is null-homologous, and so is $Z_f$ by Proposition~\ref{Nullhomologous}.
Since we already matched all but one components, the remaining component of $Z_f$ is \mbox{$\Z$--homologous} to  the remaining component of $Z$. Then, their difference as elements of the fundamental group $\pi_1(X)$ 
is a product of finitely many commutators, again surjected from $\pi_1(F)$. By applying the base diagram 
moves as in Figure~\ref{fig191},
we can adjust the last component of $Z_f$ by one commutator at a time, say
$[\alpha, \beta] = \alpha \beta \alpha^{-1} \beta^{-1}$. Note that in the blue 
box in Figure~\ref{fig191}, we have 
concentric inward-directed circles together with Lefschetz critical values. Here $\tilde{R}$ in Figure~\ref{fig191} consists of an 
iteration of pushes, moves $\mathrm{R2}^1$ and moves $\mathrm{R2}_2$, which are always-realizable. 
At the final stage, we apply the unsinks eight times to get the image \mbox{$f(Z_f) \cup f(C_f)$} exactly the same as 
the original one except for the Lefschetz critical values. However, the position of $Z_f$ in $X$ has been changed in such 
a way that the homotopy class of the final component has been changed by a commutator.

By repeating this procedure, we arrive at a broken Lefschetz fibration $g\colon X \to S^2$ with directed embedded 
image and with $Z_g$ = $Z$, which is homotopic to the original $f$ through always-realizable moves listed in 
the statement of the theorem. This proves the first part of our claim.

As for matching the twisting data, we need to slightly modify the above proof. Note that the condition on the number of untwisted components is necessary by  Proposition~\ref{untwisted}. 

In order to adjust the homotopy class of a new born component, we used a cusp merge along a joining curve in the highest 
genus region. Given such a curve, we have some freedom for performing the cusp merge \cite{L, BeH}. As seen in the 
proof of (4.8) Lemma~(1) or (4.6) Lemma~(2) in \cite{Lev}, an appropriate set of coordinates is chosen in 
a neighborhood of the joining curve. The choice is, in a sense, canonical, except that we have some freedom to 
choose coordinates along fibers. More precisely, we can change the coordinates $z$ according to the parameter $u$.
If we choose the coordinates $z$ in such a way that it is ``rotated by $\pi$'' along the parameter $u$, then the result of the cusp merge looks the same; however, the local data changes along the two new born arcs. (In the terminology of proof of \cite[Theorem~6.1]{L} or in the argument
in the last part of merging move in Section~3 of \cite{L}, we can rotate the $2$--disks embedded in the fibers along the horizontal curve connecting the two Lefschetz critical points.) Using this technique, we can adjust the twisting data of the new born component. We continue the same process until we get the final component. The twisting data of this last component \emph{will be} the same as the twisting data of the corresponding component of $Z$, since the number of untwisted components of both have the same parity, namely, $1 + b_1(X) + b^+_2(X)$ modulo $2$.
\end{proof}

\smallskip
\begin{rk}
We can remove the condition $Z_f \neq \emptyset$ from Theorem~\ref{dei} by allowing an initial birth move; otherwise, our procedure features no births or death moves. A similar realization is also possible for generic maps into surfaces; see \cite{Sa0}. 
\end{rk}

\smallskip
\section{Constructions of broken Lefschetz fibrations and pencils} \label{Sec:Assembly} 
%=======================================================================================

We now present purely topological constructions of broken Lefschetz fibrations and pencils with simplified topologies. These algorithmic constructions improve on the procedures given by the authors in \cite{Sa, B2}, by incorporating the new procedures we have obtained in the previous sections.

\smallskip
First is the construction of broken Lefschetz fibrations on arbitrary $4$--manifolds:

\begin{thm} \label{thmA} 
Let $X$ be a closed connected oriented $4$--manifold and $Z$ be a (non-empty)
null-homologous closed oriented $1$--dimensional submanifold of $X$, given with a prescribed twisting 
data, in which the number of untwisted (even) components has the same parity as $1 + b_1(X) + b^+_2(X)$. 
Then, there exists a fiber-connected, directed broken Lefschetz fibration 
\mbox{$f\colon X \to S^2$} with embedded round image, whose round locus $Z_f$ matches $Z$ 
with the same local models. 
Given any generic map from $X$ to $S^2$, such $f$ can be derived from it by an explicit algorithm.
\end{thm}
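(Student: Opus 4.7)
The plan is a three-stage pipeline: first convert an arbitrary generic map $X\to S^2$ into a broken Lefschetz fibration using the existing singularity-theoretic algorithms of \cite{Sa, B2}, then (if necessary) ensure the round locus is non-empty by a single birth, and finally invoke Theorem~\ref{dei} to realize the prescribed $Z$ with matching twisting data.

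More concretely, given any generic map $f_0 \colon X \to S^2$, I would apply the procedures summarized in Section~\ref{Sec:Preliminaries} from \cite{Sa, B2}, which produce via an explicit sequence of moves a broken Lefschetz fibration $f_1 \colon X \to S^2$: their algorithms eliminate all definite fold circles and cusps, leaving only indefinite fold circles together with a discrete set of Lefschetz singularities. In particular, $f_1$ qualifies as an indefinite fibration in the sense of this paper. If $Z_{f_1}$ is empty (so $f_1$ is an honest Lefschetz fibration), I introduce a single indefinite fold circle by a birth move in a small disk over a regular value; this is harmless and legitimate because the hypothesis that $Z$ is non-empty guarantees the final target has at least one round component, and the parity condition on the number of untwisted components of $Z$ is an invariant of $X$, so it is unaffected by this local modification.

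At this point $f_1$ is an indefinite fibration with non-empty round locus, and the hypotheses of Theorem~\ref{dei} are satisfied. Feeding $f_1$, together with the prescribed $Z$ and its twisting data (which satisfies the parity hypothesis by assumption), into the algorithm of Theorem~\ref{dei} yields a homotopy through always-realizable flip, cusp merge, unsink, push, criss-cross braiding, and Reidemeister type multi-germ moves, terminating at a fiber-connected, directed broken Lefschetz fibration $f\colon X \to S^2$ with embedded round image such that $Z_f$ coincides with $Z$ as oriented $1$-manifolds with matching local models around each component. The whole construction is explicit and algorithmic: step~1 is the algorithm of \cite{Sa, B2}, step~2 is a single birth (if it is needed at all), and step~3 is the algorithm in the proof of Theorem~\ref{dei}.

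The main obstacle is really bookkeeping rather than substantive difficulty: one must verify that the output of step~1 is a genuine indefinite fibration and not merely a generic map with residual definite singularities, so that Theorem~\ref{dei} applies. The actual content of the theorem is concentrated in Theorem~\ref{dei}, which combines the simplification Theorems~\ref{mainthm1} and~\ref{mainthm2} and Proposition~\ref{prop:locusconnected} with the twisting-data adjustment obtained by exploiting the freedom in the choice of coordinates along a cusp-merge joining curve.
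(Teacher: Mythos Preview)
Your proposal is correct and follows essentially the same approach as the paper: eliminate definite folds via the algorithm of \cite{Sa, B2} to obtain an indefinite fibration, then invoke Theorem~\ref{dei} to realize the prescribed $Z$ with its twisting data. The paper does not bother converting cusps to Lefschetz points before applying Theorem~\ref{dei} (since that theorem already accepts indefinite fibrations with cusps), and it leaves the empty-round-locus edge case to a remark, but these are cosmetic differences; your extra birth-move step is a harmless and sensible precaution.
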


\begin{proof}
Let $h\colon X \to S^2$ be a generic map, which always exists \cite{Lev0, T, Wh0}. 
It has only fold and cusp singularities, but
the $1$--dimensional singular locus might include definite folds. If that is the case, 
then we homotope $h$ to an indefinite 
generic map \mbox{$f\colon X \to S^2$} using an algorithm given by the second author in \cite{Sa} (also see \cite{Sa2}).
This procedure is given by moves similar to the ones we have discussed here, but now they
involve \emph{definite} folds as well: always-realizable flip, cusp--fold crossing,
birth--death, and Reidemeister type moves, together with cusp merge and fold merge moves
that can be realized algorithmically.\footnote{In \cite{Sa}, surgery moves were used algorithmically, while in \cite{Sa2}, 
another definite fold elimination technique is introduced without involving such surgery moves.}
An alternate proof for eliminating the 
definite fold, which also goes through a sequence of modifications of a generic map by homotopy moves, is given in 
\cite{GK2}.

We can now apply our algorithm for Theorem~\ref{dei} to homotope this indefinite fibration to 
a directed broken Lefschetz 
fibration with connected fibers and embedded round image, whose round locus realizes $Z$ with its prescribed 
twisting data (for the local models). This procedure already includes the algorithms for Theorems~\ref{mainthm1} 
and \ref{mainthm2} to obtain a directed indefinite fibration with embedded round image, and the ones for 
Proposition~\ref{prop:locusconnected} to make all the fibers connected. 
All is achieved by always-realizable moves flip, cusp merge, unsink, push, criss-cross braiding, and the multi-germ 
moves $\mathrm{R2}^0$, $\mathrm{R2}^1$, $\mathrm{R2}_2$, $\mathrm{R3}_2$ and $\mathrm{R3}_3$.
\end{proof}

As a corollary, we obtain a purely topological and algorithmic construction ---from any given generic map--- of \emph{simplified broken Lefschetz fibrations}  introduced in \cite{B1}, as well as that of \emph{simplified wrinkled fibrations} introduced in \cite{W} on arbitrary  closed $4$--manifolds. (These are simplified indefinite fibrations without cusps and without Lefschetz singularities, respectively.)

\begin{cor} \label{simpBLF}
There is an explicit algorithm, consisting of always-realizable base diagram moves, which turns any generic map 
from a closed oriented $4$--manifold $X$ to $S^2$ to a simplified broken Lefschetz fibration 
(or to a simplified wrinkled fibration). Therefore, any closed oriented connected $4$--manifold $X$  admits a simplified broken Lefschetz fibration.
\end{cor}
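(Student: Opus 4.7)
The strategy is to recognize that Corollary~\ref{simpBLF} is essentially a direct specialization of Theorem~\ref{thmA} to a trivially chosen round locus, combined with an optional wrinkling post-processing. So the plan is to take any generic map $h\colon X\to S^2$, first homotope it to an indefinite fibration $f_0$ by invoking the definite-fold elimination algorithm of the second author \cite{Sa,Sa2} (or the alternative given in \cite{GK2}), which already uses only always-realizable moves together with the algorithmically realizable cusp merge and fold merge moves. From there, one only has to feed $f_0$ into Theorem~\ref{thmA} with a suitably prescribed $Z$.

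For that $Z$, I would take a single oriented embedded circle sitting inside a coordinate $4$--ball of $X$ and bounding an embedded disk in that ball; this $Z$ is automatically null-homologous. Since $Z$ has exactly one component, the parity constraint of Proposition~\ref{untwisted} is satisfied simply by declaring the single component to be untwisted if $b_1(X)+b_2^+(X)$ is even and twisted otherwise. Applying Theorem~\ref{thmA} to the pair $(f_0,Z)$ then produces, through always-realizable base diagram moves, a fiber-connected, directed broken Lefschetz fibration $f\colon X\to S^2$ with embedded round image whose round locus is isotopic to $Z$ and is therefore a single circle. A final sequence of always-realizable push moves (Figure~\ref{Figure: PushCuspsAway}) brings every Lefschetz critical value onto the higher side of the unique round image component, at which point $f$ meets the definition of a simplified broken Lefschetz fibration.

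For the simplified wrinkled fibration variant, I would start from the simplified broken Lefschetz fibration just constructed and apply the always-realizable wrinkling move $W$ of Figure~\ref{fig201} to each of its finitely many Lefschetz critical points. Each Lefschetz value is replaced by a small three-cusped ``eye'' in a disjoint disk neighborhood lying on the higher side of the main round image circle. The outcome is an indefinite generic map whose round locus is a disjoint union of the old simplified circle and the new wrinkled eyes. To recover a single connected round locus, I would then re-invoke the cusp-merge procedure from the proof of Proposition~\ref{prop:locusconnected}, performing cusp merges along short arcs joining each eye to the main circle, supported as needed by flips, unsinks, and Reidemeister moves $\mathrm{R2}^1$ and $\mathrm{R2}_2$. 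Since the underlying fibration is fiber-connected, all fibers near the eyes on the higher side remain connected, so every cusp merge invoked in this phase is realizable.

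The second sentence of the corollary then follows at once: every closed oriented connected smooth $4$--manifold admits some generic map to $S^2$ by Thom transversality \cite{Lev0,T,Wh}, and the algorithm just described converts it to a simplified broken Lefschetz fibration. The only non-cosmetic issue in the plan is confirming that the cusp merges used after the wrinkling step are realizable; as indicated, this reduces to the same fiber-connectedness check already carried out in the proof of Proposition~\ref{prop:locusconnected}, so I do not anticipate a genuine obstacle. The algorithm terminates in finitely many steps because each of the invoked subroutines does, and the number of Lefschetz critical values is finite.
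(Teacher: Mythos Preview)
Your argument is correct and follows essentially the same route as the paper: invoke Theorem~\ref{thmA} with $Z$ chosen to be a single null-homotopic circle (whose twisting type is forced by Proposition~\ref{untwisted}) to obtain the simplified broken Lefschetz fibration, and then wrinkle the Lefschetz points and cusp-merge the resulting components into one to obtain the simplified wrinkled fibration. One small caution on the wrinkled variant: the full procedure of Proposition~\ref{prop:locusconnected} that you invoke ends with unsinks, which would reintroduce Lefschetz critical points and undo the wrinkling; the paper sidesteps this by first performing flip-and-slips on the round circle \emph{without} the terminal unsinks, then wrinkling and cusp-merging, so that no Lefschetz points are created in the process.
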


\begin{proof}
The algorithm for a simplified broken Lefschetz fibration on $X$ is provided by Theorem~\ref{thmA} by taking $Z$ as 
\emph{any} null-homologous circle in $X$, e.g.\ a null-homotopic one. Note that the twisting type of $Z$, since it has 
only one component, is already governed by the topology of $X$ by Proposition~\ref{untwisted}.  We can also turn a 
simplified broken Lefschetz fibration to a simplified wrinkled fibration \cite{W}: perform flip-and-slips on the round image, 
but without unsinking the cusps at the end, perturb each Lefschetz singularity using the wrinkling move, and cusp merge 
all the components (using arcs that project to embedded ones under the fibration map) to one. 
\end{proof}

\begin{rk}
The ``simplified'' term for an indefinite fibration coins all kinds of simplifications we have considered: directed, embedded 
round image, fiber-connected, connected round image (and all Lefschetz singularities on the higher side if it is a broken 
Lefschetz fibration).  A simplified indefinite fibration yields a beautifully simple presentation of the $4$--manifold in terms of loops 
on a surface \cite{B1, W}. These presentations are far from being unique, though, and the reader should be careful 
when implementing our algorithms. There are choices in many of the always-realizable base diagram moves we employed, such as 
the cusp merge or the Reidemeister II type moves, which will result in different presentations on the same surface. \mbox{(See, e.g., \cite{H3, BeH}.)}
\end{rk}

\begin{rk} 
As we discussed in the Introduction, these are the first purely topological 
and explicit constructions of broken Lefschetz fibrations on arbitrary $4$--manifolds with embedded round images. 
Earlier handlebody proofs \cite{GK1, L, B2, AK}, which start with an arbitrary 
Morse function 
(similar to our initial generic function), would break the $4$--manifold $X$ into 
two $2$--handlebodies $X_i$, $i = 1, 2$,
equip them with broken Lefschetz fibrations with open book boundaries, and match 
the latter implicitly by 
invoking powerful results of Eliashberg and Giroux from contact 
geometry \cite{Eliashberg, Giroux}. 
One exception is the particular case of \emph{doubles}, i.e.\ when 
$X_2=-X_1$, where the open books readily match 
\cite{GK1, Hughes}. These however constitute a rather small class 
of \mbox{$4$--manifolds.}
\end{rk}

\smallskip

Next, we provide a construction of broken Lefschetz pencils on near-symplectic \mbox{$4$--
manifolds.}

\begin{thm} \label{thmB} 
Let $\omega$ be a near-symplectic form on a closed oriented  \mbox{$4$--manifold} $X$ with non-empty $Z_{\omega}$. 
Then, there exists a fiber-connected directed broken Lefschetz pencil $f$ on $X$ with embedded round image, 
whose round locus $Z_f$ coincides with $Z_{\omega}$ with the same twisting data, and so that $\omega([F])  > 0$ 
for any fiber $F$ of $f$.  Given any generic map from $X \setminus B$ to $S^2$, which has a regular fiber Poincar\'{e} 
dual to an integral near-symplectic form $\omega$, and is a projectivization (i.e.\ conforming to the local model 
$(z_1, z_2) \mapsto z_1/z_2$) around each point in a discrete set $B$ of cardinality $[\omega]^2$  in $X$, 
such $f$ can be derived from it by an explicit algorithm.
\end{thm}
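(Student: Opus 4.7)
The plan is to apply the pencil analogue of Theorem~\ref{thmA} to the given generic map $h\colon X \setminus B \to S^2$, which we view as an indefinite pencil on $X$ with base locus $B$. Since $h$ is a projectivization on a small neighborhood $\nu(B)$, all homotopy moves are to be performed in the complement of $\nu(B)$; every always-realizable base diagram move from Theorems~\ref{mainthm1}, \ref{mainthm2}, \ref{dei}, and Proposition~\ref{prop:locusconnected} is local in the base surface and can be carried out in a disk $D \subset S^2$ avoiding the images of $\nu(B)$, so the standard pencil structure around $B$ is preserved at every stage.

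First, if $h$ has definite folds, I would eliminate them using the algorithms of \cite{Sa, Sa2}, obtaining an indefinite pencil. If the resulting round locus happens to be empty, I would insert an initial birth move as noted in the remark following Theorem~\ref{dei}. Then I would apply Theorem~\ref{mainthm1} to direct the round image, Theorem~\ref{mainthm2} to embed it while achieving fiber-connectedness, and finally Theorem~\ref{dei} with $Z = Z_\omega$ and its prescribed twisting data to realize $Z_\omega$ as the round locus $Z_f$ of a fiber-connected, directed broken Lefschetz pencil $f$ with embedded round image and matching local models. The parity constraint of Proposition~\ref{untwisted} is automatic, since near-symplectic zero loci satisfy the same parity formula $1 + b_1(X) + b_2^+(X) \pmod 2$ for the number of untwisted components by \cite{P2}.

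The positivity $\omega([F]) > 0$ for every fiber $F$ of $f$ is then automatic. Since the base is $S^2$, any two regular fibers are homologous in $X$: the pre-image of a path in $S^2$ joining two regular values provides a (possibly singular) cobordism in $X$, and any crossings of round image or Lefschetz critical values modify the fiber only by surgery along a cycle that bounds an embedded disk, preserving the absolute class in $H_2(X;\Z)$. Because $h$ has regular fiber Poincar\'e dual to $[\omega]$ and the class of the generic fiber is preserved throughout our moves (it is the pull-back of a point), the same holds for $f$, yielding $\omega([F]) = [\omega]^2 > 0$ by the near-symplectic non-degeneracy of $\omega$ on its symplectic locus.

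The main obstacle is that the algorithms of Theorems~\ref{mainthm1}--\ref{dei} are purely topological and make no reference to $\omega$, so it is not a priori clear that the output pencil is compatible with $\omega$. This is resolved in two steps: the cohomological argument above handles the positivity $\omega([F]) > 0$ uniformly over all fibers, while the explicit control of the isotopy class and of the twisting data of the round locus components afforded by the adjustable framing along the joining curve in the cusp merge step of Theorem~\ref{dei} ensures $Z_f = Z_\omega$ with matching local models, completing the proof.
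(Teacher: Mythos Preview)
Your overall strategy is sound and is, in essence, equivalent to the paper's approach: the paper blows up $X$ at the base locus $B$ to obtain a closed $\tilde{X}$ with exceptional sections $E_j$, runs the algorithm of Theorem~\ref{thmA} on $\tilde{X}$ while staying away from tubular neighborhoods $N' = \bigcup_j N_j$ of the $E_j$, and then blows the sections back down. Since $\tilde{X} \setminus N'$ is naturally identified with $X \setminus \nu(B)$, this is the same as your direct approach on the pencil complement. The paper's route has one extra payoff you skip: the blow-up construction (building $h_0, h_j$ on $N_0, N_j$ and extending) explicitly manufactures a starting generic map with the required properties, thereby proving the existence clause in the first sentence of the theorem, not just the algorithmic clause.

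There is, however, a genuine gap in your justification. You assert that each base diagram move ``can be carried out in a disk $D \subset S^2$ avoiding the images of $\nu(B)$''. This is impossible: since $h$ is a projectivization near each base point, $h$ restricted to $\partial\nu(B)$ is a union of Hopf maps $S^3 \to S^2$, so $h(\nu(B)\setminus B) = S^2$ and no disk in the target avoids it. Moreover, several steps of the algorithm (e.g.\ swinging fold arcs over the poles in Steps~2--4 of Theorem~\ref{mainthm2}) are manifestly \emph{not} local in the base. The correct justification is the one the paper actually uses: the modifications take place in the \emph{domain}, supported near the fold locus and away from $\nu(B)$, and this is legitimate because $h$ is a submersion on $\partial(X\setminus\Int\nu(B))$; see Remark~\ref{remark:boundary}, which explains why all of the relevant moves (flip, unflip, Reidemeister, criss-cross, unsink, push, cusp merge) remain realizable for indefinite fibrations on manifolds with boundary of this type. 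Once you replace your base-locality argument with this domain-side argument, your proof goes through.
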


\noindent Generic maps, which satisfy the conditions listed in the last sentence so as to be prototypes for pencils, 
are found in abundance, as we will demonstrate below.

\begin{proof} 
We can assume that $\omega$ is integral: if needed, approximate $\omega$ by a rational near-symplectic form with 
the same zero locus (and twisting data), and take a positive multiple of it. Let $F$ be a closed 
oriented surface representing its Poincar\'e dual. Since $\omega^2>0$, we have $[F]^2 = m > 0$, 
for some integer $m$. Let $\tilde{X}$ be the blow-up of $X$ 
at $m$ points on $F$ and in the complement of $Z_\omega$, \,$E_1, E_2, \ldots, E_m$\, the exceptional 
spheres, and $\tilde{F}$ the 
proper transform of $F$.  
So $[\tilde{F}]^2=0$ and each $E_j$  intersects $\tilde{F}$  positively and transversely at one point.

Let $N_j$ be disjoint tubular neighborhoods of $E_j$, for $j=1, 2, \ldots, m$, and $N_0$ be 
a tubular neighborhood of 
$\tilde{F}$ in $\tilde{X}$. Set $h_0 \colon N_0 \to D^2$ to be the projection onto 
the $D^2$--factor of the trivialization 
$N_0 \cong D^2 \x \tilde{F}$, where the target $D^2$ is embedded as the southern
hemisphere of $S^2$,
and $h_j : N_j \to S^2$ to be the radial projection onto $E_j \cong S^2$. We arrange the latter so that all the 
$D^2$--sections of $E_j \cap N_0$ are mapped onto the southern hemisphere of $S^2$
and that $h_j$ coincides with $h_0$ on $N_0 \cap N_j$, $1 \leq j \leq m$. We can now define a surjective 
map $h_N$ from  $N = \bigcup_{j=0}^m N_j$ onto $S^2$. Here, the preimages of the points of the southern 
hemisphere are diffeomorphic to $\tilde{F}$, and the preimages of the interior points of the 
northern hemisphere consist of 
$m$ copies of $2$--disks.

Extend $h_N$ to a continuous map $h\colon \tilde{X} \to S^2$, e.g.\ by first defining the map on a collar 
of $\partial (X \setminus \Int{N})$ using $h|_{\partial N}$, and then mapping all remaining points 
in the interior to the north 
pole of $S^2$. Then approximate this $h$ by a generic map relative to $N$. Note that $h$ was 
already a submersion on $\Int{N}$.

Given such a generic map $h$, we apply the same algorithm in the proof of Theorem~\ref{thmA} to obtain a 
fiber-connected, directed broken Lefschetz fibration \mbox{$\tilde{f}\colon \tilde{X} \to S^2$} with embedded round 
image, whose round locus $Z_{\tilde{f}}$ realizes the given $Z=Z_{\omega}$ with prescribed local models. 
Here, $Z_\omega$ is identified with its image under the natural inclusion of $X$ minus 
the blown-up points into $\tilde{X}$, with the same local data.
Performing all the modifications away from $N'=\bigcup_{j=1}^m N_j$, we can guarantee that 
$\tilde{f}|_{N'}=h|_{N'}$. 
(This is possible, since $\tilde{f}$ is a submersion on $\partial (X \setminus \Int{N'})$; see 
Remark~\ref{remark:boundary}. For the elimination process of definite folds, the procedures given in \cite{B2, Sa}
work the same in the case of a manifold with boundary.)
Moreover, we can still assume that $\tilde{f}$ is a submersion over the southern hemisphere, 
but with fibers that possibly 
have different genera than the original $\tilde{F}$. (Because some of our procedures might 
use base diagram moves that 
swing a fold arc over this region.) Every fiber of $\tilde{f}$, which are all homologous to the 
original $\tilde{F}$ even if $\tilde{F}$ is no longer a fiber,
intersects each $E_j$ positively and transversely at one point. So, each exceptional sphere $E_j$ is a 
section of this broken Lefschetz fibration, and blowing-down all we obtain the desired pencil. 
\end{proof}

This provides an alternate, purely topological and constructive proof of the harder direction 
of the main result, Theorem~1, of \cite{ADK}, together with some of its enforcements, such as having a pencil with directed and embedded round image. The authors' proof in \cite{ADK} instead used approximately holomorphic techniques of Donaldson to establish the existence of these broken Lefschetz pencils implicitly.

We fall short of capturing another enforcement in \cite{ADK} that seems out of reach for
``softer'' techniques: making the fibers symplectic with respect to the given near-symplectic form 
away from its zero locus. Nevertheless, the converse result of \cite{ADK}, which is a  
Thurston--Gompf type construction of a near-symplectic form on a given broken Lefschetz pencil (for which the fibers are symplectic, called a \emph{near-symplectic pencil}), allows us to 
reproduce the following result of \cite{B1} without appealing to approximately holomorphic techniques.

\begin{cor} \label{simpBLP}
Any closed oriented connected $4$--manifold $X$ with $b_2^+(X)>0$ 
admits a near-symplectic simplified 
broken Lefschetz pencil.
\end{cor}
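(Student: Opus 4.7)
The plan is to combine Theorem~\ref{thmB} with the Thurston--Gompf type converse of \cite{ADK} recalled in the preliminaries, inserting a simplification step to shrink the round locus to a single circle.

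Since $b_2^+(X) > 0$, by \cite{Ho1, ADK} the manifold $X$ admits a near-symplectic form $\omega$ with $Z_\omega \neq \emptyset$. Applying Theorem~\ref{thmB} to $\omega$, we obtain a fiber-connected, directed broken Lefschetz pencil $f_0\colon X \to S^2$ with embedded round image, such that $Z_{f_0} = Z_\omega$ and $\omega([F]) > 0$ for every fiber $F$ of $f_0$. We then run the merging procedure of Proposition~\ref{prop:locusconnected} (applied to a disk of $S^2$ containing the round image) to reduce the round locus to a single connected component, producing a simplified broken Lefschetz pencil $g\colon X \to S^2$. The base points of $f_0$ live in $X$ and are invisible to base diagram moves (cf.\ Remark~\ref{remark:boundary}), so the pencil structure is preserved throughout.

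To conclude, it suffices to exhibit an $h \in H^2(X; \R)$ pairing positively with every fiber component of $g$, for then the ADK converse produces a near-symplectic form making the fibers of $g$ symplectic. We take $h = [\omega]$. The key observation is that for a simplified broken Lefschetz fibration or pencil, the two regular fiber homology classes coincide: if $F_+$ and $F_-$ are regular fibers over points $p_+$ and $p_-$ on the higher and lower sides of the unique round circle, then a transverse arc $\gamma \subset S^2$ from $p_-$ to $p_+$ crossing the round image exactly once has preimage $W = \overline{f^{-1}(\gamma)}$ (closure taken in $X$ so as to include the base points) a compact oriented $3$--submanifold of $X$ with $\partial W = F_+ - F_-$, so $[F_+] = [F_-]$ in $H_2(X; \Z)$. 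Thus all regular fibers of $g$ lie in a single homology class $[F]$, and because $g$ is fiber-connected this is also the class of every fiber component. Choose a regular value $p_0$ of $f_0$ that remains regular throughout the finitely many base diagram moves of Proposition~\ref{prop:locusconnected}; this is possible by arranging those moves inside a disk of $S^2$ disjoint from $p_0$. Then $g^{-1}(p_0) = f_0^{-1}(p_0)$ as submanifolds of $X$, and consequently
\[
   [\omega]\cdot [F] \; = \; \int_{g^{-1}(p_0)}\omega \; = \; \int_{f_0^{-1}(p_0)}\omega \; > \; 0
\]
by Theorem~\ref{thmB}. The ADK converse then furnishes a near-symplectic form on $X$ making every fiber of $g$ symplectic, exhibiting $g$ as the desired near-symplectic simplified broken Lefschetz pencil.

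The main delicate point is the identity $[F_+] = [F_-]$: it is genuinely a consequence of the simplified round locus being a single circle, and it is what allows a single cohomological positivity check to propagate to all fibers. The other arrangements --- keeping the moves of Proposition~\ref{prop:locusconnected} supported in a neighborhood of the round image and keeping $p_0$ regular throughout the homotopy --- follow easily from the local nature of the always-realizable base diagram moves in combination with Remark~\ref{remark:boundary}.
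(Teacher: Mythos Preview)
Your argument is essentially correct but takes a longer route than the paper's. The paper observes that the construction underlying Theorem~\ref{thmB} only needs a closed oriented surface $F$ with $[F]^2>0$ (immediate from $b_2^+(X)>0$), not a near-symplectic form; it then reruns that construction with an \emph{arbitrary connected} null-homologous $Z$, so the output is already simplified. The Thurston--Gompf step is then applied with $h = \mathrm{PD}[F]$. By contrast, you first invoke Honda's existence theorem to obtain a near-symplectic $\omega$, apply Theorem~\ref{thmB} as a black box (matching the possibly disconnected $Z_\omega$), post-process with Proposition~\ref{prop:locusconnected}, and track a fixed regular fiber through the homotopy. This works, but the detour through $\omega$ is unnecessary: you use $\omega$ only to produce the class $h=[\omega]$, which the paper obtains more cheaply as $\mathrm{PD}[F]$.

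Two small technical points. First, your opening line needs $Z_\omega \neq \emptyset$, which is not part of the statement $b_2^+>0 \Rightarrow$ near-symplectic; you should say a word about introducing a canceling pair of zero circles, or cite the relevant realization result. Second, for a pencil the closure $\overline{f^{-1}(\gamma)}$ is not a smooth $3$--submanifold at the base points (it is a cone there); the clean fix is to run the cobordism argument in the blow-up and push down, or simply note that the conclusion $[F_+]=[F_-]$ holds for any broken Lefschetz fibration over a path-connected base (the single-circle hypothesis is not what drives it). Neither point breaks your proof, but both are worth tightening.
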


\begin{proof}
Since $b_2^+(X)>0$, there is a closed oriented surface 
$F$ in $X$ with $F^2>0$.  We can run the same procedure in the 
proof of Theorem~\ref{thmB} with this $F$ and \emph{any} (non-empty) null-homologous connected 
$1$--manifold $Z$ in $X$. 
(Once again, the twisting data is governed by the topology of $X$.) The result is a simplified broken Lefschetz pencil 
$f$ on $X$. Fibers are all homologous to $F$, which has positive square. Using the cohomology class which is the 
Poincar\'{e} dual of $[F]$, we can employ the Thurston--Gompf type construction of \cite{ADK} to build a 
near-symplectic form $\omega$ on $X$, with respect to which all fibers of $f$ are symplectic 
away from the singular locus. 
\end{proof}

\vspace{0.1in}
\section{Constructions of simplified trisections of $4$--manifolds} \label{Sec:Trisections}  
%=======================================================================================

In this last section, we give algorithmic constructions of trisections of \linebreak \mbox{$4$--
manifolds,} which will utilize homotopy modifications of generic maps discussed in earlier sections. Our first goal is to  describe a correspondence between broken Lefschetz fibrations and trisections of $4$--manifolds. Meanwhile, we will prove that one can derive a rather special Gay--Kirby  trisection of any $4$--manifold from a given broken Lefschetz fibration on it, which we will call a \emph{simplified trisection}. We will then move on to presenting various new constructions of (simplified) trisections of $4$--manifolds using this dictionary.

\subsection{Broken Lefschetz fibrations to trisections and back} \label{Subsec:BLFandTS}  \
%========================================================

We begin with discussing how to derive a trisection from a broken Lefschetz fibration. Since we already have procedures to homotope any given broken Lefschetz fibration to a simplified (or a bit more generally, fiber-connected, directed)  broken Lefschetz fibration, we will content ourselves with presenting our arguments for such broken Lefschetz fibrations. As in the original proof of Gay and Kirby in \cite{GK4}, the trisection we obtain will be induced by a certain generic map (trisected Morse $2$--function) to the $2$--sphere. However, the trisections we get will be rather special: they do not have \emph{any} non-trivial ``Cerf boxes'' (where handle slides occur due to crossings between indefinite fold circles) and cusps only appear in triples (on the same singular circle). See Figure~\ref{fig511} below.  Following our earlier terminology, we call such a trisection a \emph{simplified trisection}. From such a generic map, one can obtain a trisection decomposition by looking at the three sectors shown in \mbox{Figure~\ref{fig511} (b),} in the same way as argued in \cite{GK4}.

\begin{thm}[Broken Lefschetz fibrations to trisections] \label{BLFtoTS}
Let $X$ admit a  fiber-connected, directed broken Lefschetz fibration $f\colon X \to S^2$ with embedded round image. Let $f$ have $k \geq 0$ Lefschetz singularities, $\ell \geq 0$ round locus components, and lowest regular fiber genus $g$. Then there exists a simplified $(g',k')$--trisection of $X$, with $(g',k')=(2g+k+\ell+2, 2g+\ell)$.
\end{thm}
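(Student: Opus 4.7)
The plan is to modify $f\colon X \to S^2$ by a sequence of always-realizable homotopy moves together with a controlled collection of stabilizations, so as to produce a trisected Morse $2$--function on $X$ whose singular image has the structure required by a simplified $(g', k')$--trisection. First I would wrinkle each of the $k$ Lefschetz singularities via the move $W$ of Figure~\ref{fig201}, obtaining an indefinite generic map whose fold locus consists of the $\ell$ uncusped round circles of the original fibration together with $k$ new cusped circles (each carrying $3$ cusps). Using Theorem~\ref{mainthm2}, Proposition~\ref{prop:locusconnected}, and the combination moves of Subsection~\ref{ss:mixed} (including flip-and-slip to correct the orientation of each wrinkled circle when necessary), I would arrange all fold circles concentrically, outward-directed, and embedded in a disk $D \subset S^2$, with the $k$ cusped circles placed innermost and the $\ell$ original round loci immediately outside them.

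Next, I would introduce $2g + 2$ further fold circles via local birth moves combined with flips, cusp merges, $\mathrm{R2}$--moves and unsinks. Two of these are cusped circles with $3$ cusps each, placed at the very innermost position; the remaining $2g$ are uncusped, with $g$ inserted between the cusped central cluster and the original round loci, and $g$ inserted outside the original round loci. The positions are chosen so that, as one crosses the $g'$ concentric fold circles outward from the central region, the fiber genus decreases by exactly one at each step, starting at $\Sigma_{g'}$ with $g' = 2g + k + \ell + 2$ at the center and reaching $\Sigma_0 = S^2$ just outside the outermost fold circle. Finally, I would introduce a definite fold circle enclosing all indefinite folds: since the outer fiber is now $S^2$, such a definite fold can be born via a $0$--handle type move whose image bounds the trisection target disk.

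Then I would choose three equiangular radii from the center of $D$ to $\partial D$ to define the three sectors, and use the $\mathrm{R2}$-- and $C$--type multi-germ moves to redistribute the three cusps on each of the $k + 2$ cusped circles so that exactly one cusp lies in each sector. By construction each sector contains $g'$ fold arcs, of which $k' = 2g + \ell$ are uncusped and $g' - k' = k + 2$ each contain a single cusp, matching the definition of a simplified $(g', k')$--trisection. The main obstacle is to verify that the resulting trisected Morse $2$--function genuinely induces a $(g', k')$--trisection decomposition $X = X_1 \cup X_2 \cup X_3$ with each $X_i \cong \natural^{k'}(S^1 \times B^3)$ and pairwise intersections realizing the standard genus-$g'$ Heegaard splittings; this requires careful tracking of the handle decomposition of $X$ induced by the generic map, for which the Euler-characteristic identity $\chi(X) = 4 - 4g - 2\ell + k = 2 + g' - 3k'$ provides a useful consistency check.
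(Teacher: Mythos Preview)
Your proposal has a genuine gap: the crucial step where you ``introduce $2g+2$ further fold circles via local birth moves'' so that the outermost regular fiber becomes $S^2$ cannot be achieved by the moves you list. All of the moves you invoke --- births, flips, cusp merges, $\mathrm{R2}$--moves, unsinks, wrinklings --- are homotopies of the map $f\colon X \to S^2$. But a trisected Morse $2$--function has image contained in a disk and is therefore null-homotopic, whereas the original broken Lefschetz fibration $f$ is typically \emph{not} null-homotopic: the pullback $f^*[S^2]\in H^2(X;\Z)$ is Poincar\'e dual to the fiber class, which is nonzero whenever the fiber is homologically essential (as it is for honest genus $\geq 2$ Lefschetz fibrations, for instance). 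So no sequence of homotopy moves applied to $f$ can ever produce a map whose outermost fiber is $S^2$ bounding a definite fold, and your Step~3 fails for this reason. More concretely, a birth introduces a small eye whose interior and exterior fibers are diffeomorphic; no combination of always-realizable moves on such an eye will produce an uncusped outward-directed circle across which the ambient fiber genus genuinely drops, without altering the homotopy class of the map.

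The paper instead performs a non-homotopy modification: it \emph{folds} the fibration along a trivialized annular neighborhood of the equator, composing the restriction over $S^1\times[-1,1]$ with a standard Morse function $h\colon\Sigma_g\to[1,2]$ on the lowest-genus fiber, and projecting both hemispheres onto a single disk in $\R^2$. This folding replaces the trivial $\Sigma_g$--bundle over $D^2_-$ by a genuine handlebody picture of $\Sigma_g\times D^2$, producing exactly $2g$ uncusped indefinite fold circles, one inward-directed indefinite circle, and one outermost definite fold circle --- without introducing any spurious Lefschetz points. Only after this folding does the paper apply $\mathrm{R2}$--moves, flip-and-slip, and wrinklings (in a sequence close in spirit to yours) to arrange the singular image into simplified trisection form. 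Your Euler-characteristic check is correct and your treatment of the cusped circles from wrinkling is fine, but the folding step is the essential idea you are missing.
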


\begin{proof}
For brevity, we start with discussing the case of simplified broken Lefschetz fibrations.
Let $f\colon X \to S^2$ be a genus $g+1$ fiber-connected simplified broken Lefschetz fibration with non-empty round locus. The \emph{genus of a simplified broken Lefschetz fibration} here is defined as the genus of the \emph{higher side} generic fiber. Consider the decomposition $S^2 \cong D^2_+ \cup (S^1 \times [-1, 1]) \cup D^2_-$, where $D^2_+$ (resp.\ $D^2_-$) is contained in the interior of the northern (resp.\ southern) hemisphere, $S^1 \times \{0\}$ corresponds to the equator,  $S^1 \times [-1, 1]$ is a regular neighborhood of the equator, and the three pieces are glued along their boundaries.  We may assume that all the Lefschetz critical values and the round image are contained in $\Int{D^2_+}$ in such a way that the round image is outward-directed in $D^2_+$.
Then, we have a trivialization $\rho\colon \, f^{-1}(S^1 \times [-1, 1]) \to S^1 \times [-1, 1] \times \Sigma_g$, where the restriction of $f$ is the composition of $\rho$ with the projection $S^1 \times [-1, 1] \times \Sigma_g \to S^1 \times [-1,1]$. We will show how to ``fold'' the fibration $f$ along this region to get a new generic map with a definite fold, and then simplify it to obtain the desired generic map yielding a trisection.

Let $h : \Sigma_g \to [1, 2]$ be a standard Morse function 
with exactly $2g$ index--$1$ critical points, one index--$0$ critical point, $x_1$, 
and one index--$2$ critical point, $x_2$, for which $h(x_1) = 1$ and $h(x_2) = 2$.
Define the smooth function 
$\varphi \colon [-1, 1] \times \Sigma_g
\to [1, 3]$ by $\varphi(t, x) = h(x) \cos (\pi t/2)+1$, 
$(t, x) \in  [-1, 1] \times \Sigma_g$
(see Figure~\ref{fig310}). 
We can easily check that $\varphi$ is a Morse function with
$\varphi^{-1}(1) = \{-1, 1\} \times \Sigma_g$
and that its critical points $\text{Crit}(\varphi)$ coincide 
with those of $h$ on $\{0\} \times \Sigma_g$: i.e.\ we have 
$\text{Crit}(\varphi)= \{0\} \times \text{Crit}(h)$. 
Furthermore, a critical point of index $\lambda$
for $h$ corresponds to a critical point of index $\lambda +1$ for $\varphi$.

\begin{figure}[htbp]
\centering
\psfrag{f}{$\varphi$}
\psfrag{0}{$0$}
\psfrag{1}{$1$}
\psfrag{2}{$2$}
\psfrag{3}{$3$}
\psfrag{si}{$\Sigma_g$}
\includegraphics[width=\linewidth,height=0.27\textheight,
keepaspectratio]{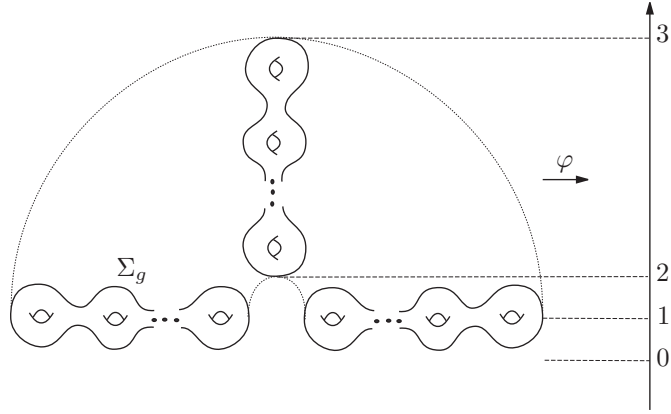}
\caption{Morse function $\varphi$.}
\label{fig310}
\end{figure} 

Let $\pi_\pm \colon D^2_\pm \to \R^2$ be the standard projections
$S^2 \to \R^2$ of the unit $2$--sphere restricted to $D^2_\pm$
composed with an appropriate multiplication by a positive constant
so that their image coincides with the unit disk in $\R^2$.
Define the smooth map \mbox{$g_0 \colon X \to \R^2$} by
$g_0|_{f^{-1}(D^2_\pm)} = \pi_\pm \circ f$
and $g_0|_{f^{-1}(S^1 \times [-1, 1])} = \eta \circ (\id_{S^1} \times \varphi) \circ \rho$,
where \mbox{$\eta \colon S^1 \times [1, 3] \hookrightarrow \R^2$}
is an appropriate embedding.

Then, $g_0$ has folds and Lefschetz singularities and its fold image consists of
concentric circles. The innermost one corresponds to the original round image $R_0$
and is outward-directed. The second one is inward-directed, and
the others are outward-directed except the outermost one which is a definite fold image.
Using $\mathrm{R2}^0$ and $\mathrm{R2}_2$ moves we can change the order of the 
first two circles, 
so now only the innermost circle is inward-directed. This, too can be reverted 
using flip and slip 
and push moves. However, unlike earlier, here we only
unsink \emph{one} of the $4$ cusp points on the reverted circle. 
Now the round image is directed 
outwards; the innermost circle has $3$ cusps, and all the Lefschetz critical values 
(including one new point) are in the central region.

\begin{figure}[htbp]
\centering
\psfrag{a}{(a)}
\psfrag{b}{(b)}
\includegraphics[width=\linewidth,height=0.3\textheight,
keepaspectratio]{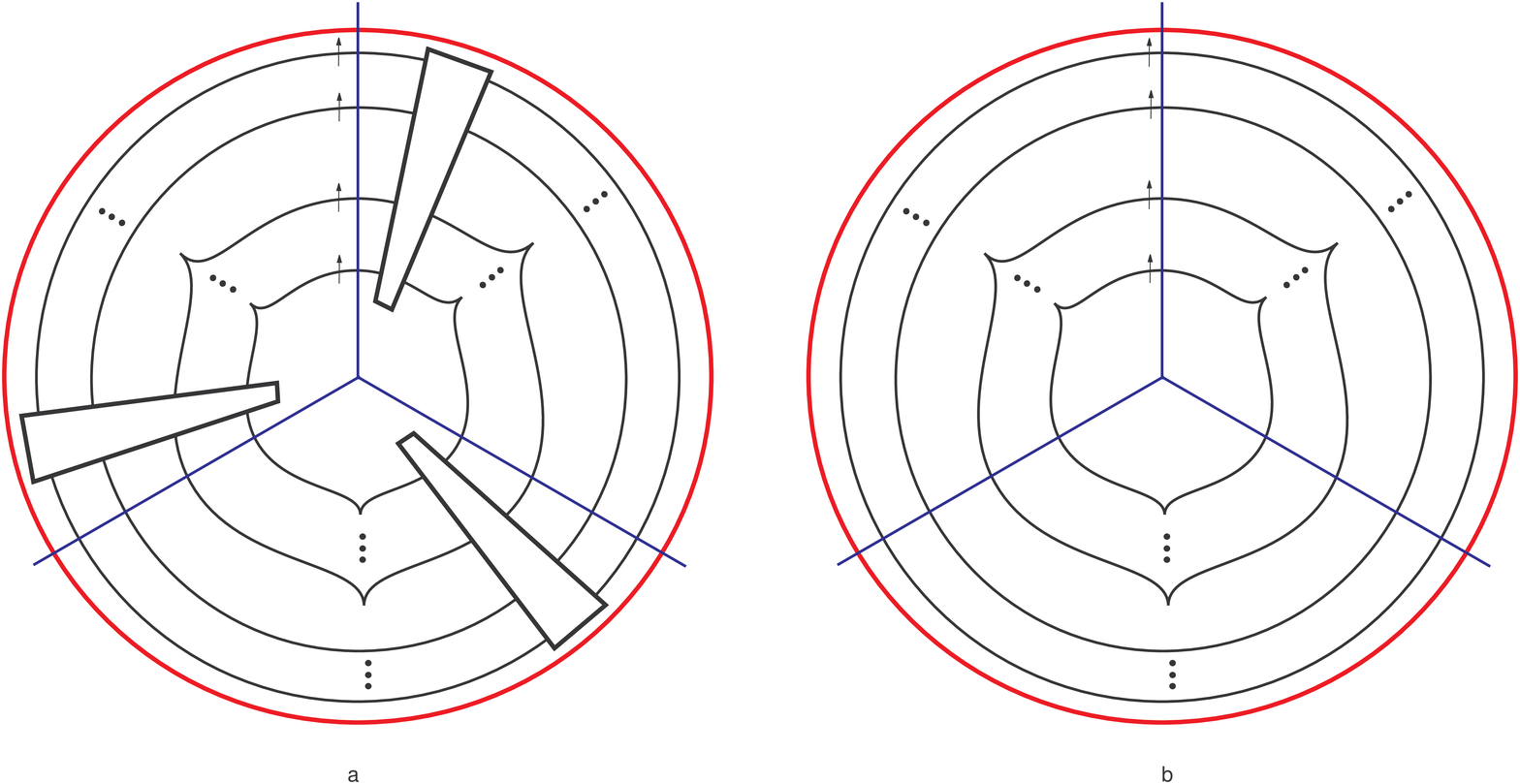}
\caption{(a) Singular image of general Gay--Kirby trisections with Cerf boxes; (b) simplified Gay--Kirby trisections we produce.}
\label{fig511}
\end{figure}

We can now wrinkle one of the Lefschetz singularities to get a ``triangle''; 
an indefinite fold circle with exactly $3$ cusps as in Figure~\ref{fig201}, 
whose image is embedded and directed outwards. Push all the other Lefschetz 
singularities into this triangle, and repeat the same procedure until no 
Lefschetz singularity is left. We finally get a generic map 
$g\colon X \to \R^2$, which has directed, embedded round image, 
with an embedded definite fold image as its outermost circle; 
see Figure~\ref{fig511} (b). 
The innermost $k+2$ circles all have $3$ cusps, where $k$ is the number of 
Lefschetz critical points of the original simplified broken Lefschetz fibration 
$f\colon X \to S^2$. The remaining $2g+1$ indefinite circles contain no cusps. 

The map $g\colon X \to \R^2$ prescribes a $(2g+k+3, 2g+1)$--trisection of 
$X$. (As in \cite{GK4}, this map yields a trisection decomposition of $X$ by looking at the three sectors shown in Figure~\ref{fig511} (b).)

More generally, assume that we have a fiber-connected, directed broken Lefschetz fibration $f \colon X \to S^2$ with embedded round image, where $Z_f$ has $\ell \geq 0$ components. Let the smallest genus of a regular fiber be $g$ and $k$ denote the number of Lefschetz singularities. We can then run the above construction to similarly derive a \mbox{$(2g+k+\ell+2, 2g+\ell)$--trisection.}
\end{proof}

\smallskip
\begin{rk}[Handle slides in simplified trisections]
The main difference between a general trisection and a simplified one is in the \emph{hierachy of handle slides}, which is imposed by the special structure of the simplified trisections. If we take any radial cut of the base disk from the center to its boundary, while avoiding the cusp points, the inverse image of this ray is a genus--$g$ handlebody, given by $g$ disjoint embedded simple closed curves $\{\alpha_1, \alpha_2, \ldots, \alpha_g\}$ on a central reference fiber $F \cong \Sigma_g$. Each $\alpha_i$ comes from the fiberwise $2$--handle attachment prescribed by the corresponding \mbox{$(g+1-i)$--th} indefinite fold arc image the ray crosses over, for $i=1, 2, \ldots, g$. \linebreak In a general trisection, moving across a \emph{non-trivial} Cerf box, these $\alpha_i$ can slide over each other in any fashion. In particular, the roles of any two $\alpha_i$ and $\alpha_j$ with $i \neq j$ might be interchanged. In a simplified trisection however, an $\alpha_i$ slides over $\alpha_j$ only if $i > j$. (That is, these handle slides only occur in an ``upper-triangular fashion'' in simplified trisections.)
\end{rk}

\begin{rk}[Trisections from --simplified broken-- Lefschetz fibrations] \label{SimpleCases}
For a genus--$(g+1)$ \emph{simplified} broken Lefschetz fibration $f\colon X \to S^2$ with non-empty round locus and $k$ Lefschetz singularities, we get a $(2g+k+3, 2g+1)$--trisection on $X$. For an honest genus--$g$ Lefschetz fibration, we get a $(2g+k+2, 2g)$--trisection. One can also allow $f$ to have \emph{achiral} Lefschetz singularities, where we also accept local orientation-preserving models $(z_1, z_2) \mapsto z_1 \bar{z}_2$ around singular points. The \emph{base diagram} of the trisection map itself will be insensitive to achirality. 

Our construction of trisections from Lefschetz fibrations can be seen to be complementary to Gay's work in \cite{Gay}, where he produces trisections from Lefschetz \emph{pencils}, which \emph{always have base points}. That is a very different construction, and the author points out that it does \emph{not} work for Lefschetz fibrations \cite[Remark~7]{Gay}. 
\end{rk}

\begin{rk}
It should be clear from our proof that we can also derive a Gay--Kirby trisection by ``folding'' any fiber-connected, directed broken Lefschetz fibration, which does not necessarily have an embedded image. Though in this case, the resulting trisection will not be simplified either.
\end{rk}

Together with Corollary~\ref{simpBLF} from the previous section, the construction in Theorem~\ref{BLFtoTS} establishes the existence of simplified trisections on arbitrary $4$--manifolds. 

\begin{cor} [Existence of simplified trisections] \label{STSexistence}
Any closed connected oriented $4$--manifold $X$ admits a simplified trisection, and such a trisection can be constructed algorithmically from any given generic map from $X$ to $S^2$.
\end{cor}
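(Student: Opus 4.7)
The plan is simply to compose the two algorithmic procedures already constructed in the paper. Given a closed connected oriented $4$--manifold $X$, first invoke the classical existence of generic maps \cite{Lev0, T, Wh0} to fix any smooth generic map $h\colon X \to S^2$; if one is supplied at the outset, use that. This supplies the input required by the rest of the argument.

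Next, I would apply Corollary~\ref{simpBLF}: its algorithm, built from always-realizable base diagram moves, homotopes $h$ to a simplified broken Lefschetz fibration $f\colon X \to S^2$. By construction (see the discussion preceding Theorem~\ref{BLFtoTS}), a simplified broken Lefschetz fibration is automatically fiber-connected, directed, has embedded round image, and has connected round locus, so $f$ meets the hypotheses of Theorem~\ref{BLFtoTS}.

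Finally, feed $f$ into the folding construction of Theorem~\ref{BLFtoTS}. That theorem produces explicitly, from $f$, a generic map $g\colon X \to \R^2$ whose singular image is of the simplified trisection form depicted in Figure~\ref{fig511}(b): a single embedded definite fold circle enclosing an outward-directed, embedded indefinite round image with cusps appearing only in triples on innermost circles, and no non-trivial Cerf boxes. As explained in Section~\ref{prelim:trisections} and in the proof of Theorem~\ref{BLFtoTS}, slicing the base disk into three sectors as in Figure~\ref{fig521} produces a trisection decomposition $X = X_1 \cup X_2 \cup X_3$, and this trisection is simplified in the sense defined earlier. If $f$ has lowest regular fiber genus $g$, $k$ Lefschetz singularities, and $\ell$ round locus components, the output has trisection parameters $(2g+k+\ell+2,\, 2g+\ell)$.

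The composition of these two steps is the desired algorithm. The only potential obstacle is verifying that the output of Corollary~\ref{simpBLF} truly satisfies all the input hypotheses of Theorem~\ref{BLFtoTS}; but this is precisely what ``simplified'' encodes, so no additional work is required.
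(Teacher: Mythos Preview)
Your proposal is correct and follows exactly the approach the paper takes: the corollary is stated immediately after Theorem~\ref{BLFtoTS} with the sentence ``Together with Corollary~\ref{simpBLF} from the previous section, the construction in Theorem~\ref{BLFtoTS} establishes the existence of simplified trisections on arbitrary $4$--manifolds,'' and no further proof is given. Your write-up simply spells out this composition in detail, and the verification that a simplified broken Lefschetz fibration satisfies the hypotheses of Theorem~\ref{BLFtoTS} is indeed immediate from the definition.
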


\smallskip
We also have the converse result:

\begin{prop}[Trisections to broken Lefschetz fibrations] \label{TStoBLF}
Let $X$ admit a \linebreak {$(g', k')$--trisection.} Then there exists a fiber-connected, directed broken  Lefschetz \linebreak fibration $f\colon X \to S^2$, which has regular fibers of highest genus $g$ and with $k$ Lefschetz singularities, where $g=g'+2$ and $k=3g'-3k'+4$. If  the given trisection is simplified, then $f$ in addition has embedded round image. 
\end{prop}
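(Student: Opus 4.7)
The plan is to reverse the folding construction of Theorem~\ref{BLFtoTS}. A $(g', k')$-trisection of $X$ corresponds to a trisected Morse $2$-function $\tau \colon X \to D^2$ whose singular image consists of an outermost embedded definite fold circle enclosing an outward-directed indefinite configuration, with $3g'$ indefinite fold arcs and $3(g' - k')$ cusps (appearing in triples on innermost circles in the simplified case). Collapsing $\partial D^2$ to a single point yields a map $\hat\tau \colon X \to S^2$ whose preimage of that point is empty and whose singular image agrees with that of $\tau$.

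From here I would proceed in three steps. First, eliminate the outermost definite fold using the second author's algorithm from \cite{Sa, Sa2}; concretely, two flips on the definite fold circle convert it into indefinite fold configurations with four new cusps, with nested bounded regions arranged so that the highest regular fiber genus rises by $2$, matching the target $g = g'+2$. Second, unsink every cusp: the $3(g'-k')$ original cusps plus the four introduced during the definite-fold elimination produce $3g' - 3k' + 4 = k$ Lefschetz singularities, leaving a map with only indefinite fold arcs and Lefschetz critical points --- that is, a broken Lefschetz fibration $X \to S^2$. Third, invoke Theorem~\ref{mainthm1} to make it outward-directed and Proposition~\ref{prop:locusconnected} to ensure fiber-connectivity; in the simplified-trisection case, the local nature of the definite-fold elimination and the cusp-unsinks preserves the embeddedness of the round image, so the resulting $f$ has embedded round image without appealing to the full machinery of Theorem~\ref{mainthm2}.

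The main obstacle is the precise bookkeeping in the definite-fold elimination step: verifying that exactly four cusps are introduced and that the highest regular fiber genus rises by exactly two requires choosing a specific variant of the algorithm in \cite{Sa, Sa2}. Arranging the two flips so that their bounded regions nest around the original central fiber of the trisection should place the maximum-genus region at the center of the base, matching $g' + 2$. A secondary concern is confirming that, in the simplified case, the chosen sequence of moves keeps the round image embedded throughout, so that no extra Lefschetz singularities need to be created by Theorem~\ref{mainthm2}; this is plausible since flips, unsinks, and pushes are all local operations, but may require carefully ordering the cusp-unsinks relative to the definite-fold elimination so as to avoid introducing crossings in the round image.
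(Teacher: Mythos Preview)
Your overall plan---eliminate the definite fold, unsink the cusps, and count---matches the paper's strategy, but there is a genuine gap in your Step~3. Invoking Theorem~\ref{mainthm1} and Proposition~\ref{prop:locusconnected} would destroy the numerical conclusions you are after: both of those procedures repeatedly use flips followed by unsinks, each of which introduces additional Lefschetz critical points and alters the highest fiber genus. After running them you would no longer have $k=3g'-3k'+4$ or $g=g'+2$, and there is no evident way to bound the damage. The paper avoids this entirely by arranging the definite-fold elimination so that the output is already directed and fiber-connected, with no post-processing needed.

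Your Step~2 is also imprecise in a way that matters. What you describe as ``two flips on the definite fold circle'' is neither Saeki's algorithm from \cite{Sa, Sa2} (which is a general elimination procedure without the tight control on cusps and genus you need) nor a well-defined single move. The paper instead uses a specific definite-to-indefinite conversion due to Williams and Gay--Kirby \cite{W, GKPNAS} (Figure~\ref{figdefiniteflipslip}), which replaces the outermost definite circle by an indefinite circle directed toward the original image and introduces no net cusps. That new indefinite circle is then carried to the north pole across all the other fold images via always-realizable $\mathrm{R2}^0$, $\mathrm{R2}_2$, $\mathrm{R3}_3$ and $C$--moves, and a standard (indefinite) flip-and-slip is applied there. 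It is this final flip-and-slip that produces exactly four new cusps and raises the central fiber genus by two, and the resulting map is already directed and fiber-connected by construction. Unsinking the $3(g'-k')$ original cusps together with these four gives $k=3g'-3k'+4$ on the nose.

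If you rework Step~2 along these lines and drop Step~3, your argument becomes essentially the paper's; your observation about the simplified case (that local moves preserve embeddedness of the round image) is then correct.
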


\begin{proof}
By embedding $\R^2$ into $S^2$ in such a way
that the central region contains the north pole, we consider the generic
map given by the trisection as a map into $S^2$. We then use a version of flip and slip for the definite fold, which first appeared in \cite{W} (where the author attributes the idea to Gay) and later in \cite[Fig~7]{GKPNAS}. It is depicted by a series of base diagram moves in Figure~\ref{figdefiniteflipslip}. By arguments identical to those we have for the indefinite case, the versions of flips and unflips involving definite folds here are always-realizable, and so are the Reidemeister type $\mathrm{R2}_2$ moves. Now  the image of the definite fold circle turns
into that of an indefinite fold circle which is directed
towards the image of the original map.

\begin{figure}[htbp]
\centering
\psfrag{I}{$\mathrm{R2}$ moves}
\psfrag{II}{$\mathrm{R2}^0$ and $\mathrm{R2}_1$}
\psfrag{B}{birth}
\psfrag{M}{cusp merge}
\psfrag{B1}{death}
\psfrag{M1}{fold merge}
\psfrag{0}{$0$}
\psfrag{1}{$1$}
\includegraphics[width=\linewidth,height=0.4\textheight,
keepaspectratio]{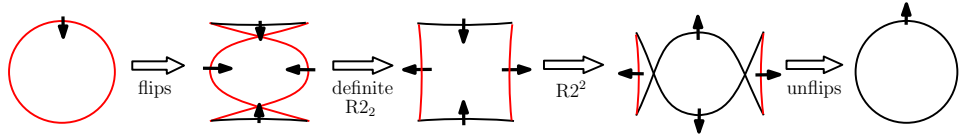}
\caption{Turning a definite fold circle to an indefinite one. Red lines depict definite fold images,
while black lines depict indefinite fold images. The arrows on definite folds indicate the fiberwise index--$3$ handle attachment direction.}
\label{figdefiniteflipslip}
\end{figure}

By always-realizable $\mathrm{R2}^0$, $\mathrm{R2}_2$, $\mathrm{R3}_3$ and $C$--moves, we can move the new indefinite circle to the north pole. We then apply flip and slip to it around the north pole. Unsink all the cusps and push them to the innermost region of this fiber-connected, directed broken Lefschetz fibration. The genus of a regular fiber in the innermost circle is now $g=g'+2$, and we get exactly $k=3(g'-k')+4$ Lefschetz critical points.
\end{proof}

\begin{rk}
The original proof of Gay--Kirby for the existence of trisections on arbitrary $4$--manifolds in \cite{GK4} also goes through --a rather different-- explicit sequence of homotopy modifications of generic maps. Our explicit procedures for simplifying broken Lefschetz fibrations allows our construction not to miss on this nice aspect. Given an arbitrary Gay--Kirby $(g',k')$--trisection, we can apply Proposition~\ref{TStoBLF} and Theorem~\ref{mainthm2} to eliminate the non-trivial Cerf boxes. Then applying Theorem~\ref{BLFtoTS}, we get back a simplified $(g'',k'')$--trisection ---typically with \emph{genus} $g''$ higher than the original genus $g'$.
\end{rk}

Here are a couple families of examples, where simplified trisections appear quite naturally:

\begin{example} [$\Sigma_g$--bundles over $S^2$]  When $g \geq 2$, these are of course isotopic to trivial fibrations on product manifolds $\Sigma_g \times S^2$. By Remark~\ref{SimpleCases}, we can derive simplified $(2g+2, 2g)$--trisections on them; 
cf.\ $(8g+5,4g+1)$--trisections of Gay--Kirby on the same examples 
\cite[pp.~3107--3108]{GK4}. 
In particular, we get $(2,0)$--trisections on $S^2 \times S^2$ and $\CP \# \CPb$.
\end{example}

\begin{example} [$S^1 \times Y^3$] \label{exheegaard}
Given a Heegaard splitting of a closed orientable 
$3$--manifold $Y$, we can almost simultaneously derive a simplified trisection 
and a fiber-connected, directed broken Lefschetz fibration on $S^1 \times Y$ with an embedded 
round image. This is based on a construction of \cite{BeH}. The genus--$g$ Heegaard splitting gives a certain family of Morse functions, any member of which can be slightly perturbed to obtain a Morse function $h: Y\to \R$ with unique minima and maxima mapping to $-g-1$ and~$g+1$, respectively, where the Heegaard surface is at the level $0$, and  the index $1$ critical points map to integer values in $(-g-1,0)$ and the index $2$ critical points map to integer values in $(0, g+1)$. Composing the generic map 
\[\text{id}_{S^1} \x h \colon S^1 \x Y \to S^1 \x [-g-2,g+2] \]
with an embedding of the annulus  \[S^1\x [-g-2, g+2] \to S^2 ,\] 
we obtain a generic map $f_0 \colon S^1 \times Y \to S^2$, with embedded singular image as given in Figure~\ref{figheegaard}. All are indefinite folds except for the outermost two definite folds. 

We can derive a simplified trisection by homotoping $f_0$ as follows: First trade the definite fold circle around the south pole with an indefinite fold circle following the base diagram move in Figure~\ref{figdefiniteflipslip}. Then apply a pair of $\mathrm{R2}^0$ and $\mathrm{R2}_2$ moves $g$ times to bring this new indefinite fold circle to the equator. Now we can apply a version of the flip and slip to the southernmost indefinite fold, where we unsink only one of the cusps. Now applying three $C$--moves and three $\mathrm{R2}_2$ moves a total of $g-1$ times, we can bring this ``triangle'' next to the equator. Repeating the same step $g-1$ more times for each indefinite fold directed towards the south pole (while pushing the new Lefschetz critical points across them), we arrive at a nested family of indefinite folds all directed towards the north pole, where $g+1$ of the innermost circles do not have any cusps, and the next $g$ circles have triples of cusps. As in the proof of Theorem~\ref{BLFtoTS}, we can use a sequence of wrinkling and push moves so the $g$ Lefschetz singularities around the southern pole give additional $g$ nested circles with triples of cusps. The resulting Morse $2$--function $f_1 \colon S^1 \x Y \to S^2$ prescribes a simplified $(3g+1, g+1)$--trisection. (cf. the $(6g+1, 2g+1)$--trisections on $S^1 \x Y$ constructed in 
\cite[pp.~3104--3107]{GK4} for a given genus--$g$ Heegaard splitting of $Y$.) 

Note that this construction gives the minimal genus trisection on $S^1 \x S^3$, a (simplified) $(1,1)$--trisection. Lastly note that for $T^2 \times S^2 = S^1 \x (S^1 \x S^2)$ the two viewpoints taken in this remark and the previous one both yield a (simplified) $(4,2)$--trisection.

\begin{figure}[!h]
\centering
\includegraphics[width=\linewidth,height=0.3\textheight,
keepaspectratio]{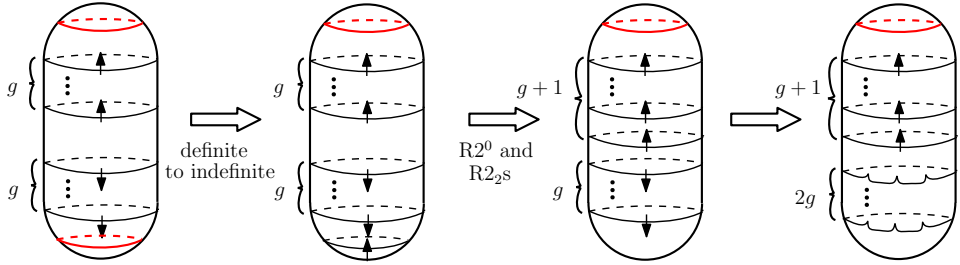}
\caption{Deriving a simplified $(3g+1,g+1)$--trisection on $S^1 \x Y$ from a genus--$g$ Heegaard splitting of $Y$. Definite folds are shown in red. In the very last step, one by one, we turn each one of the $g$ indefinite fold circles in the southern hemisphere inside-out by a version of flip and slip, where we unsink only one of the four cusps, and then place it next to the equator using $C$--moves and $\mathrm{R2_2}$s. Unsinked $g$ Lefschetz singularities are pushed across these folds, and at the end are wrinkled to add additional $g$ circles with triples of cusps.}
\label{figheegaard}
\end{figure} 

By homotoping $f_0$ a bit differently, we can also derive a fiber-connected, directed broken Lefschetz fibration on $S^1 \times Y$ with embedded round image. We go through almost the same homotopy from $f_0$ to $f_1$, except we unsink all cusps and push them to a small neighborhood of the south pole, and do not wrinkle any Lefschetz singularities. Thus we have an embedded singular image, where the base diagram consists of a definite fold enclosing outward-directed $2g+1$ indefinite fold circles, and $4g$ Lefschetz singularities around the south pole. Now, as in the proof of Proposition~\ref{TStoBLF}, we trade the remaining definite fold around the north pole with an indefinite one, apply several $\mathrm{R2}^0$, $\mathrm{R2}_2$, and push moves to bring it around the south pole, and apply flip and slip. The resulting map $f_2\colon S^1 \times Y \to S^2$ is a desired broken Lefschetz fibration, whose highest genus regular fiber has genus $2g+3$ and it has $4g$ Lefschetz singularities.
\end{example}

\begin{rk}
Although any two trisection decompositions on a $4$--manifold $X$ are stably equivalent \cite{GK4}, there are no examples known to require an arbitrarily large number of stabilizations to become equivalent. Our construction in Example~\ref{exheegaard} makes it plausible that 
such examples can be catered from similar examples for Heegaard splittings. We hope to turn to this problem in future work. For instance, by the work of Hass, Thompson and Thurston in \cite{HTT}, for each $g \geq 2$, there is a $3$--manifold $Y_g$ with two genus--$g$ Heegaard splittings that require at least $g$ stabilizations to become equivalent. 
\textit{For such pairs of Heegaard splittings, how many stabilizations are needed for the corresponding simplified trisections on $S^1 \x Y_g$ to become equivalent?} 
\end{rk}

\smallskip
\subsection{Exotic trisections} \label{Subsec:exotic}  \
%========================================================

The list of $4$--manifolds that admit $g'=0$, $1$, or $2$ trisections is very short: the only $g'=0$ trisection is for $S^4$ and $g'=1$ examples are for $\CP$, $\CPb$ and $S^1 \x S^3$ \cite{GK4}, whereas a $g'=2$ trisection is either a connected sum of these $g'=1$ examples or is a trisection of $S^2 \x S^2$ \cite{MZ}. 
Thus Meier and Zupan raise the following natural question \cite[Question~1.2]{MZ}:\ \textit{What is the smallest value of $g'$ for which there are infinitely many $4$--manifolds which admit $(g',k')$--trisection for some $k'$?}

Theorem~\ref{BLFtoTS}, combined with earlier constructions of simplified broken Lefschetz fibrations, answers this question:

\begin{cor} \label{MZquestion}
For every fixed $g' \geq 3$ and $g' -2 \geq k' \geq 1$, there are infinitely many homotopy inequivalent $4$--manifolds admitting $(g',k')$--trisections.
\end{cor}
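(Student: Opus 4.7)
The plan is to leverage the BLF-to-trisection correspondence from Theorem~\ref{BLFtoTS}: given a fiber-connected, directed broken Lefschetz fibration on $X$ with embedded round image, lowest regular fiber genus $g$, $\ell$ round-locus components, and $k$ Lefschetz singularities, one obtains a simplified $(g', k')$-trisection of $X$ with
\[
(g', k') = (2g + k + \ell + 2,\, 2g + \ell).
\]
For each target $(g', k')$ with $g' \geq 3$ and $1 \leq k' \leq g' - 2$, I would first solve this system for admissible $(g, k, \ell)$ by taking $\ell \in \{0, 1\}$ with parity matching $k'$, $g = (k' - \ell)/2 \geq 0$, and $k = g' - k' - 2 \geq 0$. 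When $\ell = 0$ this selects a genus-$g$ Lefschetz fibration with $k$ singular fibers; when $\ell = 1$ it selects a simplified broken Lefschetz fibration of highest genus $g + 1$ with one round circle and $k$ Lefschetz singularities. Both are BLFs to which Theorem~\ref{BLFtoTS} applies.

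The central step would be to exhibit, for each prescribed BLF parameter set, an infinite family of pairwise homotopy-inequivalent closed $4$-manifolds carrying such a BLF. When $g \geq 1$, this is supplied by standard constructions in the Lefschetz fibration literature: iterated fiber sums of known Lefschetz pencils combined with Luttinger surgeries, and substitutions of relations in surface mapping class groups, produce infinite families of monodromy factorizations of the same word length, with resulting total spaces distinguished by $b_1$, signature, or Seiberg--Witten invariants. For $\ell = 1$ the analogous infinite families are obtained either by appending a single round $2$-handle to members of these Lefschetz fibration families, or from the $S^1 \times Y^3$ examples built from genus-$g$ Heegaard splittings in Example~\ref{exheegaard}, which already furnish infinite families of $(3g+1, g+1)$-trisections of this form. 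Applying Theorem~\ref{BLFtoTS} to these infinite BLF families then yields infinite families of $(g', k')$-trisections on pairwise homotopy-inequivalent $4$-manifolds.

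The hard part will be the boundary cases in which $g = 0$ is forced, most acutely at $(g', k') = (3, 1)$, where the unique admissible triple is $(g, k, \ell) = (0, 0, 1)$ and the $4$-manifold is constrained to have $\chi = 2$ and cyclic fundamental group. Here I would produce the infinite family directly by varying the attaching data of the round $2$-handle on $T^2 \times D^2$ together with the framing data for the capping $S^2$-bundle $S^2 \times D^2$, obtaining $4$-manifolds which are pairwise distinguished by $\pi_1$ or by their intersection form on $H_2$; Theorem~\ref{BLFtoTS} then delivers $(3,1)$-trisections on each member. Throughout, pairwise homotopy-inequivalence in the resulting families is verified by homotopy invariants---$\pi_1$, $b_1$, signature, and the intersection form on $H_2$---which is sufficient to separate the constructed $4$-manifolds since their Euler characteristic $\chi = 2 + g' - 3k'$ is fixed by the trisection parameters.
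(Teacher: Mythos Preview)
Your treatment of the base case $(g',k')=(3,1)$ is essentially the paper's: the infinite family of genus--$1$ simplified broken Lefschetz fibrations with $k=0$ and $\ell=1$ that you describe by varying the round $2$--handle attaching data on $T^2 \times D^2$ is exactly the family $L_n$, $L'_n$ of rational homology $4$--spheres from \cite{BK, H1}, distinguished by $H_1(\,\cdot\,;\Z)\cong \Z_n$, and Theorem~\ref{BLFtoTS} converts each into a $(3,1)$--trisection.

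The gap is in your reduction for general $(g',k')$. Solving $(g',k')=(2g+k+\ell+2,\,2g+\ell)$ with $\ell\in\{0,1\}$ forces $k=g'-k'-2$ and $g=\lfloor k'/2\rfloor$, and you then ask for infinitely many homotopy--inequivalent $4$--manifolds carrying a Lefschetz fibration (or simplified BLF) with exactly those parameters. But such families do not exist for small $k$: for instance at $k'=g'-2$ with $k'$ even you get $\ell=0$, $g=k'/2\geq 1$, $k=0$, so you are asking for infinitely many $\Sigma_g$--bundles over $S^2$, of which there are at most two. More generally, a genus--$g$ Lefschetz fibration over $S^2$ requires a factorization of the identity into $k$ Dehn twists in the mapping class group, so $k=1$ is impossible and small $k$ is highly constrained; the fiber-sum and substitution constructions you invoke only produce factorizations of large length. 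The $S^1\times Y^3$ construction from Example~\ref{exheegaard} likewise only hits the diagonal $(3g+1,g+1)$.

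The paper sidesteps this entirely: once the $(3,1)$ family $\{L_n\}$ is in hand, one reaches every other admissible $(g',k')$ by taking connected sums with the standard $(1,0)$--trisection on $\CPb$ (which raises $g'$ by one) and the standard $(1,1)$--trisection on $S^1\times S^3$ (which raises both $g'$ and $k'$ by one). The resulting manifolds $L_n \,\#\, a\,\CPb \,\#\, b\,(S^1\times S^3)$ are still distinguished by $H_1$, so homotopy--inequivalence persists. This avoids any existence question for BLFs with prescribed small $k$.
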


\begin{figure}[!h]
\centering
\includegraphics[width=\linewidth,height=0.3\textheight,
keepaspectratio]{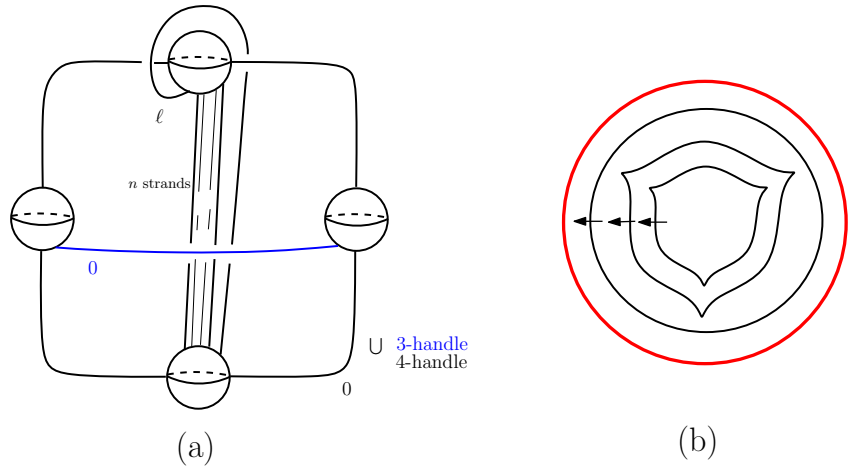}
\caption{(a) Kirby diagram for genus--$1$ simplified broken Lefschetz fibrations on rational homology $4$--spheres $L_n$ and $L'_n$, for $\ell$ even and odd, respectively. The $2$--handle and the $3$--handle that make up the round $2$--handle are given in blue. For $n=1$ we have $S^4$. (b) The base diagram for a simplified trisection on $L_n$ and $L'_n$ we obtain from these simplified broken Lefschetz fibrations.}
\label{figpao}
\end{figure} 

\begin{proof}
As shown in \cite{BK, H1}, an infinite family of homotopy inequivalent $4$--manifolds, namely the rational homology $4$--spheres $L_n$ and $L'_n$ (say for $n \geq 2$), admit genus--$1$ simplified broken Lefschetz fibrations with no Lefschetz singularities. A handlebody description of these simplified broken Lefschetz fibrations (following \cite{B1}) is given in Figure~\ref{figpao} (a). It is easy to see that $H_1(L_n; \Z)=H_1(L'_n; \Z)=\Z_n$ for $n \geq 2$.

By Theorem~\ref{BLFtoTS}, these yield simplified $(3,1)$--trisections; see Figure~\ref{figpao} (b) for the base diagram. We can use connected sums with standard trisections \cite{GK4} (e.g.\ with standard $(1,0)$ and $(1,1)$ trisections on $\CPb$ and $S^1 \times S^3$, respectively) to arrive at the other higher $(g',k')$ examples in the statement. 
\end{proof}

\begin{rk}
In \cite{MZ}, Meier and Zupan announce that their work in progress with Gay will provide an infinite family of $(3,1)$--trisections, which they construct as double branched covers of $S^4$ along $k$--twist spun $2$--bridge knots. It would be interesting to compare these examples. Akin to the classification of low genera simplified broken Lefschetz fibrations \cite{H1, BK}, one might expect the list of (simplified) $(3,1)$--trisections to consist of fairly standard ones. 
\end{rk}

\begin{example} 
An interesting example is the simplified $(3,1)$--trisection we derive from the genus--$1$ simplified broken Lefschetz fibration on $S^4$ \cite{ADK}. The Morse $2$--function for this trisection we get has 2 indefinite circles with 3 cusps on each, and one outer indefinite circle with no cusps; see Figure~\ref{figpao} for its base diagram. (The \mbox{$(3,1)$--trisections} of infinitely many rational homology spheres we obtained in the proof of Corollary~\ref{MZquestion} all have the same base diagram.) On the other hand, the ``standard trisection'' of $S^4$ \cite[Figure~27]{GK4} used for stabilization comes from a Morse $2$--function with 3 indefinite circles with 2 cusps on each. After three $C$--moves, one can obtain the same base diagrams outside of the Cerf boxes. 
\end{example}

\smallskip

A natural idea for relating newly emerging theory of trisections of $4$--manifolds to constructions of exotic smooth structures is to find $(g',k')$--trisections, whose three sectors can be re-glued in a way the diffeomorphism type is changed, but the homeomorphism type remains the same. Perhaps what is more interesting than the above question is to find the smallest $g'$ for which there are infinitely many non-diffeomorphic $4$--manifolds \emph{in the same homeomorphism class} admitting a $(g',k')$--trisection, for some $k'$ ---what one can call (small) \emph{exotic trisections}. We finish with presenting some examples:

\begin{cor} \label{exotic}
There is an infinite family of exotic $(34,8)$--trisections in the homeomorphism class of $\CP\#9 \CPb$. There is an exotic $(20,4)$--trisection in the homeomorphism class of $\CP \# 7 \CPb$. 
\end{cor}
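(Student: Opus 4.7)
The plan is to read both statements as immediate applications of the broken Lefschetz fibration $\to$ trisection correspondence of Theorem~\ref{BLFtoTS} to suitable \emph{honest} Lefschetz fibrations (so $\ell=0$) on known exotic smooth structures. Under this restriction the formula $(g',k')=(2g+k+\ell+2,\,2g+\ell)$ collapses to $(g',k')=(2g+k+2,\,2g)$. Matching the target data, a $(20,4)$--trisection requires input data $(g,k)=(2,14)$ and a $(34,8)$--trisection requires $(g,k)=(4,24)$; both are consistent with the Euler characteristic identity $\chi=4-4g+k$ for a genus--$g$ Lefschetz fibration over $S^2$ with $k$ singular fibers, since $\chi(\CP\#7\CPb)=10=4-8+14$ and $\chi(\CP\#9\CPb)=12=4-16+24$.

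For the first assertion, I would invoke an infinite family $\{X_n\}$ of pairwise non-diffeomorphic closed smooth $4$--manifolds, each homeomorphic to $\CP\#9\CPb$ and each supporting a genus--$4$ Lefschetz fibration $f_n\colon X_n\to S^2$ with $24$ vanishing cycles. Such families can be assembled from the existing literature on Lefschetz fibrations on small exotic rational surfaces; concretely, one starts from a genus--$4$ fibration on a standard rational surface with the correct $(g,k)$, performs Luttinger-type surgeries along Lagrangian tori in a fixed fiber (which preserves the fibration and the homeomorphism class but varies the Seiberg--Witten basic classes to distinguish diffeomorphism types), and iterates to produce infinitely many $X_n$. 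Theorem~\ref{BLFtoTS} applied to $f_n$ then yields a simplified $(34,8)$--trisection on each $X_n$, producing the claimed infinite family of exotic $(34,8)$--trisections.

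For the second assertion, I would use both the standard $\CP\#7\CPb$ and a single exotic smooth structure $Y$ in its homeomorphism class, each admitting a genus--$2$ Lefschetz fibration with $14$ vanishing cycles. The standard side is classical: a generic pencil of genus--$2$ curves on an appropriate rational surface, blown up to remove base points, realizes $\CP\#7\CPb$ as the total space of such a fibration. For $Y$, one takes an exotic smooth structure on $\CP\#7\CPb$ coming from the genus--$2$ Lefschetz fibration constructions on small exotic rational surfaces in the literature. Applying Theorem~\ref{BLFtoTS} to each fibration produces a $(20,4)$--trisection, and since the two underlying $4$--manifolds are homeomorphic but not diffeomorphic, these trisections form the desired exotic pair.

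The main obstacle is not in any novel calculation but in pinpointing input Lefschetz fibrations with \emph{precisely} the required $(g,k)$ on the desired exotic $4$--manifolds; once such inputs are quoted, Theorem~\ref{BLFtoTS} is entirely mechanical, and the homeomorphism classes of the total spaces are detected by Freedman's classification from the computed $(\chi,\sigma,\pi_1)$, while the smooth inequivalence is transported from the cited constructions via Seiberg--Witten type invariants.
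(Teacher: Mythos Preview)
Your overall strategy---feed honest Lefschetz fibrations into Theorem~\ref{BLFtoTS} and read off $(g',k')=(2g+k+2,2g)$---is exactly what the paper does, and your numerics $(g,k)=(4,24)$ and $(2,14)$ are correct. But two of your inputs are not actually available as stated.

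First, for the infinite family, your mechanism (``Luttinger-type surgeries along Lagrangian tori in a fixed fiber, which preserves the fibration'') is not a valid construction: Luttinger surgery along a torus sitting in a fiber does not in general return a Lefschetz fibration, and there is no off-the-shelf theorem of that form to cite. The paper instead invokes the Fintushel--Stern result \cite{FS}: for a genus--$h$ fibered knot $K$, the knot-surgered elliptic surface $E(1)_K$ carries a genus--$2h$ Lefschetz fibration. Taking genus--$2$ fibered knots $K_i$ with pairwise distinct Alexander polynomials yields infinitely many mutually non-diffeomorphic $E(1)_{K_i}$, each with a genus--$4$ fibration with $24$ singular fibers. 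This is a specific citation, not a surgery performed on a pre-existing fibration.

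Second, you assert that the \emph{standard} $\CP\#7\CPb$ admits a genus--$2$ Lefschetz fibration with $14$ singular fibers, and implicitly that standard $E(1)$ admits a genus--$4$ one with $24$. Neither is ``classical'', and the latter in particular does not fall out of the Fintushel--Stern family (a genus--$2$ fibered knot has monic Alexander polynomial of degree $4$, so $E(1)_K$ is never the standard $E(1)$). The paper avoids this issue altogether by \emph{stabilizing}: for standard $E(1)$ it applies Theorem~\ref{BLFtoTS} to the elliptic fibration ($g=1$, $k=12$) to obtain a $(16,2)$--trisection, then stabilizes six times with the $(3,1)$--trisection of $S^4$ to reach $(34,8)$; for standard $\CP\#7\CPb$ it uses a rational fibration ($g=0$, $k=6$) to get an $(8,0)$--trisection and stabilizes four times to reach $(20,4)$. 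For the exotic $\CP\#7\CPb$, the genus--$2$ fibration you want is precisely the one constructed in \cite{BaykurKorkmaz}.
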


\begin{proof} 
As shown by Fintushel and Stern, a knot surgered elliptic surface $E(1)_K$ admits a genus--$2h$ Lefschetz fibration, for $K$ a genus--$h$ fibered knot \cite{FS}. For a family of genus--$2$ fibered knots $K_i$ with distinct Alexander polynomials, we obtain a family of genus--$4$ Lefschetz fibrations $(X_i, f_i)$, where $X_i$ are mutually non-diffeomorphic irreducible symplectic \mbox{$4$--manifolds} all in the same homeomorphism class.
Per Remark~\ref{SimpleCases}, we derive an infinite family of exotic $(34, 8)$--trisections from these examples. Standard \mbox{$E(1)= \CP \# 9 \CPb$} also admits a $(34,8)$--trisection: Applying our procedure to the elliptic Lefschetz fibration on $E(1)$ (where $g=1$, $k=12$), we obtain a \mbox{$(16,2)$--trisection,} which we can then stabilize $6$ times (with a $(3,1)$--trisection on $S^4$) to get a $(34,8)$--trisection on $E(1)$ as well.

For a smaller (but a single) example, we can take the genus--$2$ Lefschetz fibration on an exotic, irreducible symplectic $\CP \# 7 \CPb$ constructed in \cite{BaykurKorkmaz}, and apply Theorem~\ref{BLFtoTS} to produce a $(20,4)$--trisection. The standard $\CP \# 7 \CPb$ also admits a $(20,4)$--trisection: Take a rational Lefschetz fibration on it with $6$ singular points, apply our procedure, and then stabilize it $4$ times. 
\end{proof}

\smallskip
\begin{rk}
Per the very nature of our article, we have only discussed trisections as a certain class of generic maps in this article. It would be interesting to analyze our examples, especially the exotic pairs, in terms of induced trisection diagrams, which can be derived following the explicit procedures given in \cite{CGP, Castro}. 
\end{rk}

%\clearpage
\vspace{0.2in}

\end{document}